\newtheorem{theorem}{Theorem}[section]
\newtheorem{lemma}[theorem]{Lemma}
\newtheorem{proposition}[theorem]{Proposition}
\newtheorem{corollary}[theorem]{Corollary}
\theoremstyle{definition}
\newtheorem{definition}[theorem]{Definition}
\newtheorem{example}[theorem]{Example}
\newtheorem{problem}[theorem]{Problem}
\newtheorem{fact}[theorem]{Fact}
\numberwithin{equation}{section}
\newtheorem{algorithm}[theorem]{Algorithm}
\newtheorem{subroutine}[theorem]{Subroutine}
\newcommand{\bs}{\boldsymbol}
\newcommand{\Q}{{\mathbb Q}}
\newcommand{\F}{{\mathbb F}}
\newcommand{\K}{{\mathbb K}}
\newcommand{\cF}{\overline{\F\!}{\,}}
\newcommand{\cnd}{C_{n,d}}
\renewcommand{\char}{\text{char}}
\let\phi\varphi
\newcommand{\Qone}{\poly^{(1)}}
\newcommand{\poly}{R}
\newcommand{\softo}{O\,\,{\widetilde{}}\,}
\author{Joachim von zur Gathen}
\address{
    B-IT\\
    Universit\"{a}t Bonn\\
    D - 53113 Bonn}
\email{gathen@bit.uni-bonn.de}
\author{Guillermo Matera}
\address{
%Universidad de Buenos Aires, Facultad de Ciencias Exactas y
%Naturales, Departamento de Matem\'atica, Ciudad
%Universitaria, Pabell\'on I (1428) Buenos Aires, Argentina\\
Universidad Nacional de General Sarmiento, Instituto del Desarrollo
Humano, J.M. Guti\'errez 1150
(B1613GSX) Los Polvorines, Buenos Aires, Argentina \\
and National Council of Science and Technology (CONICET), Argentina}
\email{gmatera@ungs.edu.ar}
\keywords{Polynomial composition, interpolation, algorithms,
symbolic computation, complexity.} \subjclass{Primary 12E05,
Secondary 68W30, 14Q05, 14Q20}
\begin{document}

\title{Interpolation by decomposable univariate polynomials}

\begin{abstract}
The usual univariate interpolation problem of finding a monic polynomial $f$ of degree $n$
that interpolates $n$ given values is well understood.
This paper studies a variant where $f$ is required to be composite, say, a composition
of two polynomials of degrees $d$ and $e$, respectively, with $de=n$, and therefore $d+e-1$ given values.
Some special cases are easy to solve, and for the general case,
we construct a homotopy between it and a special case.
We compute a \emph{geometric solution}
of the algebraic curve presenting this homotopy,
and this also provides an answer to the interpolation task.
The computing time is polynomial in the geometric data, like the degree, of this curve.
A consequence is that for almost all inputs, a decomposable
interpolation polynomial exists.
\end{abstract}
\maketitle

\begin{center}\today\end{center}

\section{Introduction}
For two univariate polynomials $g,h \in \F[X]$ of degrees $d$, $e$,
respectively, over a field $\F$, their \emph{composition}
\begin{equation}
\label{eq:composition} f = g(h) = g \circ h \in \F[X]
\end{equation}
is a polynomial of degree $n = de$. If such $g$
 and $h$ exist with degree at least $2$, then $f$ is called
 \emph{decomposable} (or \emph{composed, composite}, a \emph{composition}).
With a suitable normalization, explained in Section \ref{sec:NumberInterpolPols},
we denote as $C_{n,d}(\F)$ the set of decomposable polynomials $f$ as above.

The standard univariate interpolation problem is well understood.
It asks for a monic interpolation polynomial $f$, given $\deg f$ many constraints on values.
When we require $f \in C_{n,d}(\F)$, then only the $d+e-1$ coefficients of $g$ and $h$
need to be determined. It is natural to impose the same number of constraints.
We write $\cF$ for an algebraic closure of $\F$ and
consider the following interpolation problem:
\begin{problem}\label{problem:interpolation}
Given integers $d, e \geq 2$,
$\bs\alpha=(\alpha_1,\ldots,\alpha_{d+e-1})\in \F^{d+e-1}$
with $\alpha_i\not=\alpha_j$ for $i\not=j$,
$\bs\beta=(\beta_1,\ldots,\beta_{d+e-1})\in \F^{d+e-1}$, and $n=de$, does there
exist $f\in\cnd(\cF)$ such that $f(\alpha_i)=\beta_i$ for $1\le i\le
d+e-1$ and $f=g \circ h$ with $\deg g=d$ as above?
\end{problem}

Standard univariate interpolation at $n$ points
$(\alpha_i, \beta_i)$ has a solution if $\alpha_i \neq \alpha_j$ for $i \neq j$,
that is, $\prod_{i\neq j}(\alpha_i-\alpha_j)$ is nonzero.
But most polynomials are indecomposable: the decomposable ones form a subvariety of small dimension
within the space of all polynomials (Fact \ref{th:geometryCnd}).
Thus interpolating at $n$ points and then decomposing will usually not furnish a solution.
Our parametrization provides a polynomial $R$ (Definition \ref{defGeneric}) such that $R(\alpha)\neq 0$ guarantees
a decomposable interpolant.
Thus almost all such problems have a solution and Problem \ref{problem:interpolation} a positive answer.

This issue arises, for example, in the black-box model of representing polynomials.
If it is expensive to calculate any single value of the polynomial---say by some experiment---%
then one may wish to minimize the number of values sufficient to identify the polynomial.
If, in addition, the polynomial is a priori known to be decomposable,
then our method provides the optimal number of values required.

%------------------------------------------------------------------------
%------------------------------------------------------------------------
%------------------------------------------------------------------------
%
\section{Special cases of the interpolation problem}
\label{section:specialBeta}
In this section, we show that Problem
\ref{problem:interpolation} has a positive answer
 for a generic choice of $\bs\alpha\in
\F^{d+e-1}$ and certain, rather special, $\bs\beta\in \F^{d+e-1}$.
These results hold over an arbitrary field $\F$.

For any positive integer $\ell$ and vector $\bs x = (x_1, \ldots, x_\ell) \in \F^\ell$,
we write $\# \bs x$ for the number of different coordinates of $\bs x$,
so that $\# \bs x = \# \{x_1, \ldots, x_\ell\}$.
Then $\bs x$ defines a set partition $\mathcal P_{\bs x}$ of $\{1,\ldots,\ell\}$ into $\# \bs x$ blocks,
where $i,j \leq \ell$ belong to the same block if and only if $x_i = x_j$.
The Stirling number of the second kind
$S_2(\ell,m)$ is the number of set partitions of $\{1,\ldots,\ell\}$ into exactly $m$ blocks
($m$-partitions).
% Note: $1/2 \cdot (m^2+m+2)m^{\ell-m-1} \leq S_2(\ell,m) \leq 1/2\cdot \binom{\ell}{m} m^{\ell-m}$.
For a polynomial $f \in \F[X]$, we write $f(\bs x) = (f(x_1),
\ldots, f(x_\ell)) \in \F^\ell$, so that $\# f(\bs x)$ equals the
number of different values that $f$ takes on $\{x_1, \ldots,
x_\ell\}$. For two set partitions ${\mathcal P}$ and $ {\mathcal Q}$
we say that ${\mathcal P}$ is a \emph{refinement} of ${\mathcal Q}$
and write ${\mathcal P} \preccurlyeq {\mathcal Q}$, if each block of
${\mathcal P}$ is contained in one block of ${\mathcal Q}$. If
$\mathcal P$ consists of $m$ blocks, we call it an $m$-refinement of
$\mathcal Q$. Then $m \geq \#\mathcal Q$, and some $m$-refinement of
$\mathcal Q$ exists if and only if $\ell \geq m \geq \#\mathcal Q$. This
induces a partial order on the set of partitions of
$\{1,\ldots,\ell\}$. The minimum of this lattice consists of $\ell$
singletons, and its maximum of a single block.
Let  $\bs\alpha=(\alpha_1,\ldots,\alpha_{d+e-1})\in \F^{d+e-1}$ and $(g,h)$ be
a solution to Problem \ref{problem:interpolation}.
Then
\begin{equation*}
\label{implication} h(\alpha_i) = h(\alpha_j) \Longrightarrow
\beta_i = g\circ h(\alpha_i) =g\circ h(\alpha_j) = \beta_j \text{
for } i,j \leq d+e-1.
\end{equation*}
Thus ${\mathcal P}_{h(\bs \alpha)} \preccurlyeq {\mathcal P}_{\bs \beta}$
and $\# h(\bs \alpha) \geq \#\bs \beta$.

We use the polynomial $\Qone$ defined in \eqref{Q1def} below.
\begin{lemma}\label{lemma:cardinality1}
The polynomial $\Qone \in \F[\bs A]$ is nonzero and has degree less
than $e^2 \, S_2(d+e-1,d-1)$. For any
$\bs\alpha=(\alpha_1,\ldots,\alpha_{d+e-1})\in \F^{d+e-1}$ with
$\Qone(\bs \alpha) \neq 0$ and any $h = X^e +h_{e-1}X^{e-1} + \cdots + h_1 X \in \F[X]$
we have
$\#h(\bs\alpha) \geq d$.
\end{lemma}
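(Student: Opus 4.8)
The plan is to build $\Qone$ as a product of determinants, one per set partition of $\{1,\dots,d+e-1\}$ into $d-1$ blocks. For an admissible $h=X^e+h_{e-1}X^{e-1}+\cdots+h_1X$, write $\sigma_k(X,Y)=(X^k-Y^k)/(X-Y)=\sum_{a+b=k-1}X^aY^b\in\F[X,Y]$, so that the divided difference is $\bigl(h(X)-h(Y)\bigr)/(X-Y)=\sigma_e+\sum_{k=1}^{e-1}h_k\sigma_k$, which is affine-linear in $(h_1,\dots,h_{e-1})$. Fix a $(d-1)$-partition $\mathcal P$ and the spanning forest whose edges join the least element of each block to the remaining elements of that block; it has exactly $(d+e-1)-(d-1)=e$ edges $(r_1,c_1),\dots,(r_e,c_e)$ with each $r_t$ the minimum of its block, the $c_t$ pairwise distinct, and $\{c_1,\dots,c_e\}\cap\{r_1,\dots,r_e\}=\emptyset$. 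Let $M_{\mathcal P}\in\F[\bs A]^{e\times e}$ have $t$-th row $\bigl(\sigma_1(A_{r_t},A_{c_t}),\dots,\sigma_{e-1}(A_{r_t},A_{c_t}),-\sigma_e(A_{r_t},A_{c_t})\bigr)$, put $D_{\mathcal P}=\det M_{\mathcal P}$, and define $\Qone=\prod_{\mathcal P}D_{\mathcal P}$ over all $(d-1)$-partitions $\mathcal P$ (there are $S_2(d+e-1,d-1)\ge 1$ of them since $1\le d-1\le d+e-1$).

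For the degree: $\deg\sigma_k=k-1$, so in $M_{\mathcal P}$ the first $e-1$ columns have entries of degree $\le e-2$ and the last column entries of degree $\le e-1$; since each term of the Leibniz expansion picks the last column exactly once, $\deg D_{\mathcal P}\le(e-1)+(e-1)(e-2)=(e-1)^2<e^2$, whence $\deg\Qone\le(e-1)^2\,S_2(d+e-1,d-1)<e^2\,S_2(d+e-1,d-1)$. For nonvanishing it suffices, $\F[\bs A]$ being a domain, that each $D_{\mathcal P}\neq0$. Here I would specialize $A_r\mapsto0$ for every block-minimum $r$; this is legitimate because no $c_t$ is a block-minimum, so $A_{c_1},\dots,A_{c_e}$ survive as $e$ distinct indeterminates. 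Under this specialization $\sigma_k(A_{r_t},A_{c_t})\mapsto\sigma_k(0,A_{c_t})=A_{c_t}^{k-1}$, so $M_{\mathcal P}$ becomes the Vandermonde-type matrix $\bigl(A_{c_t}^{k-1}\bigr)_{1\le t,k\le e}$ with its last column negated, whose determinant is $\pm\prod_{1\le s<t\le e}(A_{c_t}-A_{c_s})\neq0$; hence $D_{\mathcal P}\neq0$.

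It remains to prove the stated implication. Suppose $\bs\alpha$ has pairwise distinct coordinates, $\Qone(\bs\alpha)\neq0$, and some admissible $h$ satisfies $\#h(\bs\alpha)=m\le d-1$. Then $\mathcal P_{h(\bs\alpha)}$ has $m$ blocks, and since $m\le d-1\le d+e-1$ it admits a $(d-1)$-refinement $\mathcal Q$. Every edge $(i,j)$ of the chosen spanning forest of $\mathcal Q$ joins two indices lying in a common block of $\mathcal Q$, hence of $\mathcal P_{h(\bs\alpha)}$, so $h(\alpha_i)=h(\alpha_j)$ and therefore $\sigma_e(\alpha_i,\alpha_j)+\sum_{k=1}^{e-1}h_k\sigma_k(\alpha_i,\alpha_j)=\bigl(h(\alpha_i)-h(\alpha_j)\bigr)/(\alpha_i-\alpha_j)=0$ (using $\alpha_i\neq\alpha_j$). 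Thus $(h_1,\dots,h_{e-1},1)$ is a nonzero vector in the kernel of $M_{\mathcal Q}(\bs\alpha)$, so $D_{\mathcal Q}(\bs\alpha)=0$ and $\Qone(\bs\alpha)=0$, a contradiction; hence $\#h(\bs\alpha)\ge d$.

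The one genuinely delicate point is the combinatorial setup of the spanning forests: they must be chosen so that the ``child'' endpoints $c_t$ are pairwise distinct and disjoint from the block-minima, since exactly this makes the specialization in the nonvanishing step well defined and the resulting matrix an honest Vandermonde. Everything else — the affine-linearity, the degree count, the refinement step, and the kernel argument — is routine once that bookkeeping is fixed.
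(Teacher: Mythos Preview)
Your construction is structurally the same as the paper's: one determinant per $(d-1)$-partition, rows indexed by the spanning-forest edges (block minimum paired with each other element of the block), and $\Qone$ the product over all such partitions. The paper uses the undivided entries $A_{r_t}^k-A_{c_t}^k$ rather than your $\sigma_k(A_{r_t},A_{c_t})$ and proves nonvanishing by a Laplace-expansion argument rather than your Vandermonde specialization, but up to the row factors $A_{r_t}-A_{c_t}$ the two matrices coincide.

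That row factor is precisely where your argument has a genuine gap. You assume in the last paragraph that ``$\bs\alpha$ has pairwise distinct coordinates'', but the lemma carries no such hypothesis, and your $\Qone$ does not enforce it. Concretely, take $d=e=2$. The unique $1$-partition of $\{1,2,3\}$ has edges $(1,2),(1,3)$, and your matrix is
\[
\begin{pmatrix}1&-(A_1+A_2)\\ 1&-(A_1+A_3)\end{pmatrix},
\qquad D_{\mathcal P}=A_2-A_3,
\]
so $\Qone=A_2-A_3$. Now set $\alpha_1=\alpha_2=0$, $\alpha_3=1$: then $\Qone(\bs\alpha)=-1\ne0$, yet $h=X^2-X$ satisfies $h(\alpha_1)=h(\alpha_2)=h(\alpha_3)=0$, so $\#h(\bs\alpha)=1<d$. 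The failure is that your kernel step divides by $\alpha_{r_t}-\alpha_{c_t}$; when that vanishes, $\sigma_e+\sum h_k\sigma_k$ evaluates to $h'(\alpha_{r_t})$, which need not be zero. With the paper's undivided entries this cannot happen: if $\alpha_{r_t}=\alpha_{c_t}$ the whole row is zero and the determinant vanishes automatically. The fix is simply to drop the division and use $A_{r_t}^k-A_{c_t}^k$; you then get degree $e(e+1)/2<e^2$ per factor instead of your $(e-1)^2$, still within the stated bound.

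A minor slip: with your last column equal to $-\sigma_e$, the product $M_{\mathcal Q}(\bs\alpha)\cdot(h_1,\dots,h_{e-1},1)^{\sf T}$ has $t$-th entry $\sum_k h_k\sigma_k-\sigma_e=-2\sigma_e$, not $0$. Either make the last column $+\sigma_e$ or take the kernel vector $(h_1,\dots,h_{e-1},-1)$.
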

\begin{proof}
Assume that $h=X^e+h_{e-1}X^{e-1}+\cdots+h_1X\in \F[X]$
satisfies $m = \#h(\bs\alpha) < d$. We consider the
$m$-partition $\mathcal P = \{P_1, \ldots, P_m\} = {\mathcal
P}_{h(\bs \alpha)}$ of $\{1,\ldots,d+e-1\}$. Thus $i,j < d+e$ are in
the same block of $\mathcal P$ if and only if $
h(\alpha_i)=h(\alpha_j)$. In order to facilitate notation, we sort
the integers in each $P_k  \subseteq \{1,\ldots, d+e-1\}$ in the natural order, let $i_k
\in P_k$ be the smallest element of $P_k$, and furthermore sort
$\{1,\ldots,m\}$ according to the $i_k$'s, so that $i_1 < i_2 <
\cdots < i_m$. Then we have
\begin{equation}
\label{oneRelation}
h(\alpha_{i_k}) - h(\alpha_j) = 0 \text{ for all } k\leq m \text{ and } j \in P_k.
\end{equation}
We also set $P_k^{(1)} = P_k \setminus \{i_k\}$.
In particular, if  $P_k = \{i_k\}$ is a singleton, then $P_k^{(1)} = \emptyset$.
The various instances of \eqref{oneRelation} provide
$$
\sum_{1\leq k \leq m} \#P_k^{(1)} = \sum_{1\leq k \leq m} (\#P_k -1) = \sum_{1\leq k \leq m} \#P_k -m
= d+e-1-m \geq e$$
linear equations on the coefficients of $h$.

We now pick $e$ of these equations. We might take any $e$ ones,
but for a unique choice, we take the ``first'' ones, as follows.
We let $\ell \leq m$ be the smallest index so that
$$
\sum_{1\leq k \leq \ell} \# P_k^{(1)} \geq e, \quad P_\ell^{(1)} \neq \emptyset,
$$
let $P_\ell^{(2)} \subseteq P_\ell^{(1)}$ consist
of the first $e- \sum_{1\leq k < \ell} \# P_k^{(1)}$ elements of $P_\ell^{(1)}$,
and $E = \bigcup_{1\leq k < \ell} P_k^{(1)} \cup P_\ell^{(2)}$.
Then $P_\ell^{(2)}$ is nonempty and $\# E = e$.

The equations  \eqref{oneRelation} corresponding to $E$ are:
\begin{equation}
\label{linearEqs}
\left\{
    \begin{array}{ll}
    h(\alpha_{i_k}) - h(\alpha_j)=0 & \text{for } 1\leq k < \ell, \, j \in P_k^{(1)}, \\
    h(\alpha_{i_\ell}) - h(\alpha_j)=0 & \text{for }  j \in P_\ell^{(2)}.
    \end{array}
        \right.
\end{equation}
We denote by $j_0$
the largest value of $j \in P_\ell^{(2)}$.
Then these equations correspond to the following system of $e$ linear equations
for the coefficients of $h$:

\begin{equation}
\label{matrixEquation1}
\left(
    \begin{array}{ccccccc}
      \alpha_{i_1}-\alpha_{j} & \cdots &
      \alpha_{i_1}^e-\alpha_{j}^e\\
      \vdots &  & \vdots \\
      \alpha_{i_{\ell}}-\alpha_{j_0} & \cdots &
      \alpha_{i_{\ell}}^e-\alpha_{j_0}^e\\
    \end{array}
  \right)
\left(
  \begin{array}{c}
    h_1 \\
    \vdots \\
    h_{e-1} \\
    1 \\
  \end{array}
\right)=
\left(
  \begin{array}{c}
    0 \\
     \vdots\\
    0 \\
  \end{array}
\right),
\end{equation}
where the indices correspond to those of \eqref{linearEqs}.
In particular, $j$ has the same value on each line of the matrix, but these values are different for different lines.

If the coefficient matrix is nonsingular, then the system has no
solution $h$, and thus our assumption about $h$ is contradictory. We
can reformulate this by replacing each occurrence of an $\alpha_i$
in the matrix by an indeterminate $A_i$ and letting

\begin{equation}
\label{matrixForP}
\poly_\mathcal P = \text{det}
\left(
    \begin{array}{ccccccc}
      A_{i_1}-A_{j} & \cdots &
      A_{i_1}^e-A_{j}^e\\
      \vdots &  & \vdots \\
      A_{i_{\ell}}-A_{j_0} & \cdots &
      A_{i_{\ell}}^e-A_{j_0}^e\\
    \end{array}
  \right) \in \F[\bs A],
\end{equation}
where $\bs A = (A_1, \ldots, A_{d+e-1})$. Then $\# h({\bs\alpha})
\geq d$ if $\poly_\mathcal P (\bs \alpha) \neq 0$.

We claim that the matrix for  $\poly_\mathcal P$ is nonsingular. For
each of its rows, there is a value $j$ so that $A_j$ occurs on that
line and nowhere else in the matrix, namely as $A_j, \ldots, A_j^e$.
In a Laplace expansion of the determinant, the cofactor of $A_j^e$
is a matrix that also has this property. There is no cancellation
for $A_j^e$ and inductively, it follows that $\poly_\mathcal P \neq
0$. Its total degree is $\sum_{1\leq i \leq e} i = e(e+1)/2 < e^2$.

The partition $\mathcal P$ that gives rise to $\poly_\mathcal P$ is an
$m$-partition of $\{1,\ldots,d+e-1\}$, for some $m < d$. We now
describe a $(d-1)$-partition ${\mathcal Q}$ of $\{1,\ldots,d+e-1\}$
with $\poly_{\mathcal P} = \poly_{{\mathcal Q}}$. We let $E' =
\bigcup_{1\leq k < \ell} P_k  \cup \{i_\ell\} \cup P_\ell^{(2)}$. An
index $i \in \{1,\ldots, d+e-1\}$ occurs somewhere in the matrix for
$\poly_{\mathcal P}$ if and only if $i \in  E'$, and $\#E' = e + \ell$.
Thus there are $d+e-1- (e+\ell) = d - 1 -\ell$ such indices that do
not occur in the matrix. Now ${\mathcal Q}$ consists of the blocks
$P_1,\ldots,P_{\ell-1}$ of ${\mathcal P}$, $\{i_\ell\} \cup
P_{\ell}^{(2)}$, and a singleton $\{i\}$ for all $i \in
\{1,\ldots,d+e-1\} \setminus E'$. Thus ${\mathcal Q}$ has
$\ell + d-1-\ell = d-1$ blocks and is a $(d-1)$-partition of
$\{1,\ldots,d+e-1\}$. Furthermore, the indices in the added
singletons occur neither in the matrix for $\poly_\mathcal P$ nor in
that for $ \poly_{{\mathcal Q}}$, so that $\poly_\mathcal P = \poly_{{\mathcal
Q}}$.

We set
\begin{equation}
\label{Q1def}
\Qone=\prod_{\mathcal{Q}}\poly_{\mathcal{Q}},
\end{equation}
where the product runs over all $(d-1)$-partitions ${\mathcal Q}$ of
$\{1,\ldots,d+e-1\}$.
Then the claim of the lemma follows.
\end{proof}

For $\bs\alpha \in \F^{d+e-1}$, let
$${\mathcal{C}}_{\bs \alpha}=\{h = X^e +h_{e-1}X^{e-1} + \cdots + h_1 X \in \F[X]
\colon \,\#(h(\bs\alpha))\le d\}.$$
We use the polynomial $\poly^{(2)}$ defined in \eqref{Qdef} below.
\begin{lemma}\label{lemma:cardinality2}
The polynomial $\poly^{(2)} \in \F[\bs A]$ is nonzero and has degree less than
$e^2 \, (S_2(d+e-1,d-1) + S_2(d+e-1,d))$.
For any $\bs\alpha \in
\F^{d+e-1}$ with $\poly^{(2)}(\bs \alpha) \neq 0$, we have
$$\#{\mathcal{C}}_{\bs \alpha}=S_2(d+e-1,d),$$
$\#h(\bs\alpha) = d$ for all $h \in {\mathcal{C}}_{\bs \alpha}$,
and $\alpha_i \neq \alpha_j$ whenever $1 \leq i < j < d+e$.
\end{lemma}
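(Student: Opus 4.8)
The plan is to adapt the construction from the proof of Lemma~\ref{lemma:cardinality1}, now letting the relevant partitions be the $d$-partitions that realize the collision patterns of the polynomials in $\mathcal{C}_{\bs\alpha}$. For a $d$-partition $\mathcal{Q}=\{P_1,\dots,P_d\}$ of $\{1,\dots,d+e-1\}$, with blocks ordered so that their least elements satisfy $i_1<\dots<i_d$, the $\sum_{k}(\#P_k-1)=e-1$ relations $h(\alpha_{i_k})=h(\alpha_j)$ (for $j\in P_k\setminus\{i_k\}$) form a linear system in the $e-1$ unknowns $h_1,\dots,h_{e-1}$, where $h_e=1$. Let $\poly^{(2)}_{\mathcal{Q}}\in\F[\bs A]$ be the determinant of the $(e-1)\times(e-1)$ coefficient matrix of this system with each $\alpha_i$ replaced by $A_i$, so the row of the relation $h(\alpha_{i_k})=h(\alpha_j)$ is $(A_{i_k}-A_j,\,A_{i_k}^2-A_j^2,\,\dots,\,A_{i_k}^{e-1}-A_j^{e-1})$. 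I would then set
\begin{equation}
\label{Qdef}
\poly^{(2)}=\Qone\cdot\prod_{\mathcal{Q}}\poly^{(2)}_{\mathcal{Q}},
\end{equation}
where the product runs over all $d$-partitions $\mathcal{Q}$ of $\{1,\dots,d+e-1\}$.

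First I would check that every $\poly^{(2)}_{\mathcal{Q}}$ is nonzero of total degree at most $1+2+\dots+(e-1)=e(e-1)/2<e^2$; the degree bound is immediate from the determinant. For the nonvanishing, the key point is that the right-hand-side indices $j$ appearing in the $e-1$ rows are pairwise distinct and that none of them equals any least element $i_k$ (a least element of a block is never a non-least element of a block). Thus the non-cancellation argument of the previous proof applies: expanding the determinant fully, the term obtained by choosing $-A_j^t$ from every binomial entry $A_{i_k}^t-A_j^t$ equals $(-1)^{e-1}$ times a scaled Vandermonde determinant in the distinct variables $A_j$, which is a nonzero polynomial involving only those variables, whereas every remaining term contains a positive power of some $A_{i_k}$, a variable distinct from all the $A_j$; hence there is no cancellation. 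Since there are $S_2(d+e-1,d)$ partitions $\mathcal{Q}$ and $\deg\Qone<e^2\,S_2(d+e-1,d-1)$ by Lemma~\ref{lemma:cardinality1}, this yields $\poly^{(2)}\neq0$ and the asserted degree bound.

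Now fix $\bs\alpha$ with $\poly^{(2)}(\bs\alpha)\neq0$, so $\Qone(\bs\alpha)\neq0$ and $\poly^{(2)}_{\mathcal{Q}}(\bs\alpha)\neq0$ for every $d$-partition $\mathcal{Q}$. If $h\in\mathcal{C}_{\bs\alpha}$, then $\#h(\bs\alpha)\le d$ by definition and $\#h(\bs\alpha)\ge d$ by Lemma~\ref{lemma:cardinality1}, hence $\#h(\bs\alpha)=d$; this proves the second assertion. For the cardinality, I would exhibit a bijection between $\mathcal{C}_{\bs\alpha}$ and the set of $d$-partitions of $\{1,\dots,d+e-1\}$. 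One map sends $h\in\mathcal{C}_{\bs\alpha}$ to $\mathcal{P}_{h(\bs\alpha)}$, which is a $d$-partition by the previous sentence. The other sends a $d$-partition $\mathcal{Q}$ to the unique $h\in\F[X]$ of the prescribed shape solving the linear system above; existence and uniqueness hold because its coefficient matrix is the matrix defining $\poly^{(2)}_{\mathcal{Q}}$, evaluated at $\bs\alpha$, hence nonsingular. This $h$ satisfies all block relations of $\mathcal{Q}$, so $\mathcal{P}_{h(\bs\alpha)}\succcurlyeq\mathcal{Q}$, whence $\#h(\bs\alpha)\le d$ and $h\in\mathcal{C}_{\bs\alpha}$; combined with $\#h(\bs\alpha)\ge d=\#\mathcal{Q}$ from Lemma~\ref{lemma:cardinality1}, the coarsening $\mathcal{P}_{h(\bs\alpha)}$ of $\mathcal{Q}$ has $d$ blocks and therefore equals $\mathcal{Q}$. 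Conversely, any $h\in\mathcal{C}_{\bs\alpha}$ satisfies the block relations of $\mathcal{Q}=\mathcal{P}_{h(\bs\alpha)}$, so by uniqueness it is the image of that $\mathcal{Q}$. The two maps are mutually inverse, so $\#\mathcal{C}_{\bs\alpha}=S_2(d+e-1,d)$.

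For distinctness, given $1\le i<j<d+e$, consider the $d$-partition $\mathcal{Q}_0$ with block $\{i,j\}$ and the remaining $d+e-3\ge d-1$ indices split into $d-1$ further nonempty blocks; the matrix defining $\poly^{(2)}_{\mathcal{Q}_0}$ then has the row $(A_i-A_j,\,\dots,\,A_i^{e-1}-A_j^{e-1})$, which vanishes if $\alpha_i=\alpha_j$, so that $\poly^{(2)}_{\mathcal{Q}_0}(\bs\alpha)=0$, contradicting $\poly^{(2)}(\bs\alpha)\neq0$. Hence all $\alpha_i$ are distinct. The step I expect to need the most care is the nonvanishing of $\poly^{(2)}_{\mathcal{Q}}$: one must confirm that the pivot indices $i_k$ never coincide with the pairwise distinct right-hand-side indices $j$, which is exactly what makes the non-cancellation argument of Lemma~\ref{lemma:cardinality1} go through here; everything else is bookkeeping with the refinement order, using Lemma~\ref{lemma:cardinality1} to upgrade $\#h(\bs\alpha)\le d$ to $\#h(\bs\alpha)=d$.
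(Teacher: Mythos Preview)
Your proposal is correct and follows essentially the same approach as the paper: you define the same polynomial $\poly^{(2)}=\Qone\cdot\prod_{\mathcal{Q}}S_{\mathcal{Q}}$ (the paper writes $S_{\mathcal{P}}$ for your $\poly^{(2)}_{\mathcal{Q}}$), prove nonvanishing of each factor by the same non-cancellation idea exploiting that the indices $j$ are distinct and disjoint from the $i_k$, establish the same bijection between $\mathcal{C}_{\bs\alpha}$ and $d$-partitions, and handle distinctness via the same doubleton-block trick. Your write-up is in fact slightly more careful than the paper's in verifying that the map $\mathcal{Q}\mapsto h$ really lands in $\mathcal{C}_{\bs\alpha}$ with $\mathcal{P}_{h(\bs\alpha)}=\mathcal{Q}$ (not a strict coarsening), which the paper leaves implicit.
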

\begin{proof}
Let $\bs\alpha \in \F^{d+e-1}$ with $\Qone(\bs \alpha) \neq 0$, and
$h=X^e+h_{e-1}X^{e-1}+\cdots+h_1X\in {\mathcal{C}}_{\bs \alpha}$.
Lemma \ref{lemma:cardinality1}
implies that $\#h(\bs\alpha) = d$.

We now follow the recipe for proving Lemma \ref{lemma:cardinality1}.
Now $\mathcal P = (P_1,\ldots,P_d)$ is a $d$-partition of $\{1,\ldots,d+e-1\}$,
and all instances of \eqref{oneRelation} yield only
$$
\sum_{1\leq k \leq d} \#P_k^{(1)} = \sum_{1\leq k \leq d} (\#P_k -1) = \sum_{1\leq k \leq d} \#P_k -d
= d+e-1-d = e -1$$
linear equations on the coefficients of $h$.
Thus \eqref{matrixEquation1} would provide an $(e-1) \times e$ system of linear equations.
In order to obtain a square coefficient matrix, we move the quantities corresponding to the last column
to the ``constant'' side and obtain:

\begin{equation}
  \label{matrixEquation2}
\left(
    \begin{array}{ccccccc}
      \alpha_{i_1}-\alpha_{j} & \cdots & \alpha_{i_1}^{e-1}-\alpha_{j}^{e-1}  \\
      \vdots &  & \vdots \\
      \alpha_{i_{d}}-\alpha_{j} & \cdots & \alpha_{i_{d}}^{e-1}-\alpha_{j}^{e-1} \\
    \end{array}
  \right)
\left(
  \begin{array}{c}
    h_1 \\
    \vdots \\
    h_{e-1} \\
  \end{array}
\right)= \left(
  \begin{array}{c}
 \alpha_{j}^e-\alpha_{i_1}^e\\
     \vdots\\
   \alpha_{j}^e-\alpha_{i_{d}}^e \\
  \end{array}
\right),
\end{equation}
where again the value of $j$ is constant along each row of the matrix, but different for different rows.

We consider the matrix in $\F[\bs A]^{(e-1) \times (e-1)}$, where
each entry $\alpha_i$ in the coefficient matrix is replaced by the
indeterminate $A_i$ and let $S_{\mathcal P} \in \F[\bs A]$ be its
determinant. By the same argument as for
$\poly_{\mathcal P}$ in the previous proof, also $S_{\mathcal P}$ is
nonzero. Its degree is $(e-1)e/2 < e^2$. If $S_{\mathcal P} (\bs
\alpha) \neq 0$, then the coefficients of $h$ are uniquely
determined by \eqref{matrixEquation2}.

An $h \in  {\mathcal{C}}_{\bs \alpha}$ determines its partition
${\mathcal{P}}$ uniquely via \eqref{oneRelation}. On the other hand, if
$S_{\mathcal P} (\bs \alpha) \neq 0$, then \eqref{matrixEquation2}
determines $h$ uniquely. Thus we have a bijection between
${\mathcal{C}}_{\bs \alpha}$ and the set of $d$-partitions
of $\{1,\ldots,d+e-1\}$, both finite of size $S_2(d+e-1,d)$.

Recalling $\poly^{(1)}$ from \eqref{Q1def}, we let
\begin{equation}
\label{Qdef}
 \poly^{(2)} = \poly^{(1)} \cdot \prod_{\mathcal{P}}S_{\mathcal{P}}
 \in \F[\bs A],
 \end{equation}
 where the product runs over the set of $d$-partitions of
$\{1,\ldots,d+e-1\}$.

 For the last claim, let $1 \leq i < j < d+e$ and choose a $d$-partition $\mathcal P$ of $\{1,\ldots,d+e-1\}$,
 one of whose blocks, say $P_k$, is $\{i,j\}$. Since $e\geq 2$, there exists a $(d-1)$-partition on the other
 $d+e-3 \geq d-1$ elements.
 In the matrix of indeterminates defining $S_{\mathcal P}$, $A_i - A_j$ divides all entries in the row corresponding to $P_k$,
and hence also $S_{\mathcal P}$ and $ \poly^{(2)}$ are divisible by this difference.
Thus  $\alpha_i \neq \alpha_j$ whenever  $ \poly^{(2)}(\bs \alpha) \neq 0$.
 The claims of the lemma follow.
 \end{proof}

 The Bell number $B_n$ counts the number of all set partitions of a set with $n$ elements.
 The estimate in Berend \&\ Tassa \cite{bertas10} yields
 \begin{eqnarray}
 \label{StirlingBound}
 S_2(d+e-1,d-1) + S_2(d+e-1,d)  & < & B_{d+e-1} < \bigg( \frac {d+e-1} {\ln (d+e)} \bigg)^{d+e-1},\nonumber\\
 \deg  \poly^{(2)}  & < & e^2 B_{d+e-1} \leq (d+e)^{d+e-1}.
 \end{eqnarray}

For certain $\bs \beta$ with sufficiently few distinct entries,
we can compute a composite interpolation polynomial $f=g\circ h$ with $\deg g =d$, and the number of such polynomials.

\begin{proposition}
\label{interpolForSmallSizeBeta} Let $\bs \alpha\in\F^{d+e-1}$
satisfy $ R^{(2)} (\bs \alpha) \neq 0$ and $\#\bs \beta \leq d$.
Then we can compute a solution to Problem
\ref{problem:interpolation} for $(\bs \alpha, \bs \beta)$.
% with $O(e^\omega + {\sf M}(d)$ arithmetic operations.
All solutions are defined over $\F$ and the number of solutions
equals the number of $d$-refinements of ${\mathcal P}_{\bs \beta}$.
\end{proposition}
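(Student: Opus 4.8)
The plan is to set up a bijection between the solutions $(g,h)$ of Problem~\ref{problem:interpolation} for $(\bs\alpha,\bs\beta)$ and the $d$-refinements of $\mathcal{P}_{\bs\beta}$, checking along the way that everything produced is rational over $\F$; this simultaneously yields the computability, the rationality, and the count. First, the map from refinements to solutions. Fix a $d$-refinement $\mathcal{P}=\{P_1,\dots,P_d\}$ of $\mathcal{P}_{\bs\beta}$ (one exists since $\#\bs\beta\le d\le d+e-1$); in particular $\mathcal{P}$ is a $d$-partition of $\{1,\dots,d+e-1\}$. Because $R^{(2)}(\bs\alpha)\neq 0$, the bijection established in the proof of Lemma~\ref{lemma:cardinality2} gives a unique $h=h_{\mathcal P}\in\mathcal{C}_{\bs\alpha}$ with $\mathcal{P}_{h(\bs\alpha)}=\mathcal{P}$, obtained over $\F$ by solving the square linear system~\eqref{matrixEquation2}; moreover $\#h(\bs\alpha)=d$, so $h$ takes $d$ pairwise distinct values $v_1,\dots,v_d\in\F$ on $\bs\alpha$, with $v_l=h(\alpha_i)$ for all $i\in P_l$. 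Since $\mathcal{P}\preccurlyeq\mathcal{P}_{\bs\beta}$, the indices in a block $P_l$ carry a common $\bs\beta$-value $\gamma_l\in\F$. It then remains to solve for $g$: a monic $g=X^d+g_{d-1}X^{d-1}+\cdots+g_0$ with $g(v_l)=\gamma_l$ for $1\le l\le d$ is, after moving $v_l^d$ to the right, a $d\times d$ linear system whose coefficient matrix is the Vandermonde matrix of the distinct nodes $v_1,\dots,v_d$, hence invertible; so there is a unique such $g=g_{\mathcal P}\in\F[X]$. Then $(g_{\mathcal P},h_{\mathcal P})$ is a solution with $\deg g_{\mathcal P}=d$, since $g_{\mathcal P}(h_{\mathcal P}(\alpha_i))=g_{\mathcal P}(v_l)=\gamma_l=\beta_i$ for $i\in P_l$. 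Distinct $d$-refinements give distinct $h_{\mathcal P}$, again by the Lemma~\ref{lemma:cardinality2} bijection, hence distinct solutions, all defined over $\F$; and the whole construction is effective (solving linear systems over $\F$).

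Next I would show that every solution arises this way, which gives the count and the rationality statement. Let $(g,h)$ be a solution. As noted just before Lemma~\ref{lemma:cardinality1}, $h(\alpha_i)=h(\alpha_j)$ forces $\beta_i=\beta_j$, so $\mathcal{P}_{h(\bs\alpha)}\preccurlyeq\mathcal{P}_{\bs\beta}$; and since $R^{(1)}\mid R^{(2)}$, Lemma~\ref{lemma:cardinality1} gives $\#h(\bs\alpha)\ge d$. The crucial additional fact is the reverse inequality $\#h(\bs\alpha)\le d$, i.e.\ $h\in\mathcal{C}_{\bs\alpha}$. Granting it, $\mathcal{P}:=\mathcal{P}_{h(\bs\alpha)}$ is a $d$-refinement of $\mathcal{P}_{\bs\beta}$, Lemma~\ref{lemma:cardinality2} forces $h=h_{\mathcal P}$, and $g$ (being monic of degree $d$ with $g(v_l)=\gamma_l$) satisfies exactly the Vandermonde system above, so $g=g_{\mathcal P}$. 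Thus $(g,h)=(g_{\mathcal P},h_{\mathcal P})$, the assignment $(g,h)\mapsto\mathcal{P}_{h(\bs\alpha)}$ inverts the construction of the first paragraph, and in particular every solution is defined over $\F$. This establishes the proposition.

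The main obstacle is proving $\#h(\bs\alpha)\le d$ for an arbitrary solution $(g,h)$. Writing $k=\#h(\bs\alpha)$ and $v_1,\dots,v_k$ for the distinct values, the composition forces $g(v_l)=\gamma_l$ for $1\le l\le k$, where $\gamma_l$ is the $\bs\beta$-value common to the $l$-th fibre of $h$ on $\bs\alpha$ (well defined by $\mathcal{P}_{h(\bs\alpha)}\preccurlyeq\mathcal{P}_{\bs\beta}$, and lying among the at most $d$ distinct coordinates of $\bs\beta$). If $k>d$, reducing $g$ modulo $\prod_{l}(X-v_l)$ shows that $g$ must coincide with the unique interpolating polynomial of the $k$ pairs $(v_l,\gamma_l)$, whose degree would then have to be exactly $d$ with leading coefficient $1$. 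The step I expect to be delicate is ruling this out: one has to see that the special shape of $\bs\beta$, the genericity condition $R^{(2)}(\bs\alpha)\neq 0$, and the degree constraint $\deg g=d$ together exclude $k>d$, forcing $k=d$ and placing us back in the regime governed by Lemmas~\ref{lemma:cardinality1} and~\ref{lemma:cardinality2}.
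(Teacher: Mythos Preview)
Your forward construction matches the paper's proof exactly: from a $d$-refinement $\mathcal P\preccurlyeq\mathcal P_{\bs\beta}$, solve~\eqref{matrixEquation2} for $h_{\mathcal P}\in\F[X]$ and then interpolate for the monic $g_{\mathcal P}\in\F[X]$ of degree $d$; this yields existence, rationality of the constructed solutions, and injectivity of $\mathcal P\mapsto(g_{\mathcal P},h_{\mathcal P})$. The paper stops here and cites the bijection of Lemma~\ref{lemma:cardinality2} for the count, which tacitly assumes that every solution $(g,h)$ has $h\in\mathcal C_{\bs\alpha}$---precisely the step you flag as delicate.

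That step cannot be completed when $\#\bs\beta>1$: the count and the ``all solutions defined over $\F$'' claims are false as stated. Take $d=e=2$, $\bs\alpha=(0,1,-1)$ (so $R^{(2)}(\bs\alpha)\neq 0$), and $\bs\beta=(0,0,12)$. The unique $2$-refinement of $\mathcal P_{\bs\beta}=\{\{1,2\},\{3\}\}$ produces $(X^2+4X,\,X^2-X)$, but $(X^2-4X,\,X^2+3X)$ and $(X^2+X,\,X^2-2X)$ are two further solutions, each with $\#h(\bs\alpha)=3>d$; replacing $12$ by a value $c$ with $1+2c$ not a square in $\F$ gives extra solutions not defined over $\F$. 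So your suspected obstruction is real and your outlined strategy for the converse cannot be salvaged in general. The gap \emph{does} close when $\#\bs\beta=1$: then $g-\beta_1$ is monic of degree $d$ and vanishes at all $\#h(\bs\alpha)$ distinct values $h(\alpha_i)$, forcing $\#h(\bs\alpha)\le d$. This covers Corollary~\ref{coro:zero_dimensional_fiber_pi} (the case $\bs\beta=\bs 0$), which is the only instance of the proposition actually used later in the paper.
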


\begin{proof}
We let $\mathcal P$ be an arbitrary $d$-refinement
 of $\mathcal P_{\bs \beta}$,
that is, $ \mathcal P \preccurlyeq \mathcal P_{\bf \beta}$. Solving
the nonsingular system \eqref{matrixEquation2} of linear equations
corresponding to $\mathcal P$ provides a solution
$(h_1,\ldots,h_{e-1})$ with all $h_i \in \F$, and we set
$h=X^e+h_{e-1}X^{e-1}+\cdots + h_1X \in \F[X]$. Then $\#h(\bs
\alpha) = d$ and for $1\leq k \leq d$, we let $i_k \in P_k \subset \{1,\ldots,d+e-1\}$ be
a representative of the $k$th block $P_k$ of $\mathcal P$. The
condition that $g(h(\alpha_{i_k})) = \beta_{i_k}$ for $k \leq d$
defines a unique monic interpolation polynomial $g\in\F[X]$ of
degree $d$, and $(g,h)$ is a solution to Problem
\ref{problem:interpolation}.

The proof of Lemma \ref{lemma:cardinality2} notes a bijection
between the set of $h$ and that of the above partitions. This
implies the claim on the number of solutions.
\end{proof}

\begin{example}
\label{example1}
For $d=e=2$, we consider the arbitrarily chosen $\bs \alpha = (5,6,7)$ and $\bs \beta=(3,3,3)$.
Then $\mathcal P_{\bs \beta} = \{1,2,3\}$ has the three 2-refinements
$\{\{1,2\},\{3\}\}$,  $\{\{1,3\},\{2\}\}$, and  $\{\{2,3\},\{1\}\}$.
We find the corresponding three components $g=x^2+g_1x+g_0$ and $h=x^2+h_1x$ with $(g_1, g_0, h_1)$ as
$$
(58, 843, -11), (71, 1263, -12), (82, 1683, -13)
$$
 and their compositions $f = g \circ h$ as
\begin{eqnarray*}
& x^4-22x^3+179x^2-638x+843, x^4-24x^3+215x^2-852x+1263,\\ & x^4-26x^3+251x^2-1066x+1683.
\end{eqnarray*}
This works over any field $\F$ with $\char\ \F \neq 2$.
$\F_2$ does not have enough elements for such an example, but $\F_4 = \F_2[y]/(y^2+y+1)$ does.
For $\bs \alpha = (0,1,y)$ and $\bs \beta = (y+1,y+1,y+1)$, we find the following three
coefficient vectors of components $(g,h)$:
$$
(1,y+1,1), (y+1, y+1,y), (y, y+1, y+1)
$$
For all three, the composition is $f = x^4+x+y+1$.
This is a \emph{composition collision} of a type which is classified in Blankertz et al.\ \cite{blagat13}. \hfill \qed
% Fact 3.1, p=r=2, \ell=m=1, u=s=1, \epsilon=0, T=\{1,y,y+1\}, y(y+1)=1,
%  f and g original.
\end{example}

\begin{corollary}\label{coro:zero_dimensional_fiber_pi}
For an $\bs\alpha\in \F^{d+e-1}$ with $R^{(2)} \neq 0$, the set of solutions $(g,h)$ to
Problem \ref{problem:interpolation} for $(\bs\alpha,\bs 0)$ is finite of
cardinality $S_2(d+e-1,d)$ and all solutions are defined over $\F$.
For such a solution, we have $\# h(\bs \alpha) = d$.
\end{corollary}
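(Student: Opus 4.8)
The plan is to specialize Proposition \ref{interpolForSmallSizeBeta} to $\bs\beta = \bs 0 \in \F^{d+e-1}$. First I would observe that all coordinates of $\bs 0$ coincide, so $\#\bs 0 = 1$, and since $d\geq 2$ we have $\#\bs 0 \leq d$; moreover the associated set partition $\mathcal P_{\bs 0}$ is the maximum of the partition lattice on $\{1,\ldots,d+e-1\}$, the single block $\{1,\ldots,d+e-1\}$. Thus the hypotheses of Proposition \ref{interpolForSmallSizeBeta} are met for $(\bs\alpha,\bs 0)$, and it provides a solution $(g,h)$, asserts that every solution is defined over $\F$, and states that the number of solutions equals the number of $d$-refinements of $\mathcal P_{\bs 0}$.

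The second step is purely combinatorial: every block of any partition of $\{1,\ldots,d+e-1\}$ is contained in the unique block of $\mathcal P_{\bs 0}$, so a $d$-refinement of $\mathcal P_{\bs 0}$ is simply a $d$-partition of $\{1,\ldots,d+e-1\}$. Hence the number of solutions equals $S_2(d+e-1,d)$, which is finite and, since $e\geq 2$ forces $d+e-1\geq d$, at least $1$.

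Finally, I would check the last assertion. Let $(g,h)$ be any solution to Problem \ref{problem:interpolation} for $(\bs\alpha,\bs 0)$, so $f = g\circ h$ with $f(\alpha_i)=0$ for $1\leq i\leq d+e-1$. Then $g(h(\alpha_i))=0$ for all $i$, so the values $h(\alpha_1),\ldots,h(\alpha_{d+e-1})$ all lie among the at most $d$ roots in $\cF$ of the degree-$d$ polynomial $g$; therefore $\#h(\bs\alpha)\leq d$, that is, $h\in{\mathcal{C}}_{\bs\alpha}$. Since $\poly^{(1)}$ divides $\poly^{(2)}$ by \eqref{Qdef}, the hypothesis $\poly^{(2)}(\bs\alpha)\neq 0$ implies $\poly^{(1)}(\bs\alpha)\neq 0$, and Lemma \ref{lemma:cardinality2} (equivalently Lemma \ref{lemma:cardinality1}) then gives $\#h(\bs\alpha)=d$. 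There is essentially no obstacle here; the only point requiring a moment's care is the identification of $d$-refinements of the top partition with arbitrary $d$-partitions, which is immediate from the definition of refinement.
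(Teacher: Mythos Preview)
Your proof is correct and follows essentially the same route as the paper: specialize Proposition~\ref{interpolForSmallSizeBeta} to $\bs\beta=\bs 0$, note that $\mathcal{P}_{\bs 0}$ is the maximum partition so its $d$-refinements are precisely the $d$-partitions of $\{1,\ldots,d+e-1\}$, counted by $S_2(d+e-1,d)$. Your final paragraph deriving $\#h(\bs\alpha)=d$ from the root structure of $g$ and Lemma~\ref{lemma:cardinality2} is more explicit than the paper's two-line proof, which leaves that claim implicit in the surrounding lemmas.
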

\begin{proof}
${\mathcal P}_{\bs 0}$ is the maximum partition in the lattice of partitions,
$\#{\mathcal P}_{\bs 0} = 1$, and it has $S_2(d+e-1,d)$ many $d$-refinements.
\end{proof}
%
%------------------------------------------------------------------------
%------------------------------------------------------------------------
%------------------------------------------------------------------------
%
\section{The number of interpolants}
\label{sec:NumberInterpolPols} We have shown with methods from
linear algebra how to solve the interpolation problem for special
$\bs\beta$. The general case requires tools from algebraic geometry.
We use notions like varieties and morphism only over algebraically closed fields,
but they may be ``defined over'' some smaller field $\F$ with algebraic closure $\cF$.
Standard notions and notations of algebraic geometry can be found in,
e.g., Kunz \cite{Kunz85} or Shafarevich \cite{Shafarevich94}.

We now set some notation for the decomposable polynomials as in \eqref{eq:composition}.
We may assume all three polynomials
to be
monic (leading coefficient 1) and $h$ original (constant coefficient
0, so that the graph contains the origin). All other compositions
can be obtained from this special case by composing (on the left and
on the right) with linear polynomials (polynomials of degree 1);
see, e.g., von zur Gathen \cite{gat14}.

Thus we consider for a proper divisor $d$ of $n$ and $e=n/d$
\begin{align}
\label{CndDef}
P_n(\F) = & \; \{ f \in{ \F}[X] \colon \deg f = n, f \text{ monic}\}, \nonumber\\
P_e^*(\F) = & \; \{ h \in P_e(\F) \colon h \text{ original}\}, \nonumber\\
\gamma_{n,d} \colon & P_d(\F) \times P_e^*(\F) \rightarrow
P_n(\F) \text{
  with }
\gamma_{n,d} (g,h) = g \circ h, \\
C_{n,d}(\F) = & \; \{ f \in P_n(\F) \colon \exists\, g,h \in
P_d(\F) \times
P_e^*(\F) \;\; f = g \circ h \}  \nonumber \\
= & \; \text{im } \gamma_{n,d} , \nonumber \\
C_n(\F) = & \bigcup_{{d \mid n} \atop {d \not \in \{1,n\} } }
C_{n,d}(\F). \nonumber
\end{align}
We may endow $P_n(\F)$ with a structure of $n$-dimensional vector
space over $\F$, associating each $f=X^n+f_{n-1}X^{e-1}+\cdots+f_0$
with the vector $(f_{n-1},\ldots,f_0)$, and $C_n(\F)$ is the
algebraic variety of decomposable polynomials. We drop the argument
$\F$ when it is clear from the context. When $n$ is prime, then
$C_n(\F)$ is empty, and in the following we always assume $n$ to be composite.
We have the following geometric description of $\cnd$.

\begin{fact}[{von zur Gathen \&\ Matera \cite[Theorem
2.2]{gatmat17}}]\label{th:geometryCnd}
  Let $d$ be a proper divisor of $n$ and $\F$ be an algebraically closed field with
  $\char(\F)$ not dividing $d$. Then $\cnd(\F) = ~\mathrm{im}\
  \gamma_{n,d}$ as in \eqref{CndDef} is a closed irreducible algebraic subvariety of
  $P_n(\F)$ of dimension $d+e-1$
and degree at most $d^{d+e-1}$.
\end{fact}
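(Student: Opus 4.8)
The plan is to show that the parametrization $\gamma_{n,d}$ is a \emph{closed immersion} of the affine space $\mathbb{A}^{d+e-1}\cong P_d(\F)\times P_e^*(\F)$ into $\mathbb{A}^n\cong P_n(\F)$; closedness, irreducibility and $\dim C_{n,d}=d+e-1$ then follow at once, and the degree bound will come from the degrees of the polynomials defining $\gamma_{n,d}$.

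The crucial step is to exhibit a morphism $\psi\colon\mathbb{A}^n\to\mathbb{A}^{d+e-1}$ with $\psi\circ\gamma_{n,d}=\mathrm{id}$. Write $f=g\circ h=\sum_{0\le i\le d}g_ih^i$ with $g_d=1$. For $i<d$ one has $\deg(g_ih^i)\le ei\le n-e$, so the $e-1$ top coefficients $f_{n-1},\dots,f_{n-e+1}$ of $f$ depend only on $h$; expanding $h^d=X^n\bigl(1+h_{e-1}X^{-1}+\cdots+h_1X^{-(e-1)}\bigr)^d$ shows that $f_{n-k}=d\,h_{e-k}+P_k(h_{e-1},\dots,h_{e-k+1})$ for $1\le k\le e-1$, a triangular system all of whose diagonal entries equal $d$. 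Because $\char(\F)\nmid d$, back-substitution expresses the coefficients of $h$ as polynomials in $f_{n-1},\dots,f_{n-e+1}$. Knowing $h$, one recovers $g$ by iterated composition division: $g_{d-1}$ is the coefficient of $X^{n-e}$ in $f-h^d$, then $g_{d-2}$ is the coefficient of $X^{n-2e}$ in $f-h^d-g_{d-1}h^{d-1}$, and so on, each $g_i$ being a polynomial in the coefficients of $f$ and of $h$. Substituting the formulas for $h$, all of this defines $\psi$ on the whole of $\mathbb{A}^n$, and $\psi\circ\gamma_{n,d}=\mathrm{id}$ holds by construction. Hence the comorphism $\gamma_{n,d}^*$ admits the section $\psi^*$ and is surjective, so $\gamma_{n,d}$ is a closed immersion; concretely, $C_{n,d}=\mathrm{im}\,\gamma_{n,d}$ is the zero set of the coordinates of $\gamma_{n,d}\circ\psi-\mathrm{id}$. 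Thus $C_{n,d}$ is a closed subvariety of $P_n(\F)$ isomorphic to $\mathbb{A}^{d+e-1}$, in particular irreducible of dimension $d+e-1$.

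For the degree I would note that every coordinate of $\gamma_{n,d}$, being a coefficient of $\sum_{0\le i\le d}g_ih^i$, has total degree at most $d$ in the $d+e-1$ variables $g_0,\dots,g_{d-1},h_1,\dots,h_{e-1}$: the summand $h^d$ has degree $d$ in the $h_j$, and each $g_ih^i$ with $i<d$ has degree at most $1+i\le d$. A generic linear subspace $L\subseteq\mathbb{A}^n$ of codimension $d+e-1$ meets the $(d+e-1)$-dimensional variety $C_{n,d}$ in $\deg C_{n,d}$ points; since $\gamma_{n,d}$ is an isomorphism onto $C_{n,d}$, the finite set $\gamma_{n,d}^{-1}(L)$ has the same cardinality, and it is cut out in $\mathbb{A}^{d+e-1}$ by the $d+e-1$ affine-linear equations defining $L$ composed with $\gamma_{n,d}$, hence by $d+e-1$ polynomials of degree at most $d$. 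By the Bézout inequality this set has at most $d^{d+e-1}$ points, so $\deg C_{n,d}\le d^{d+e-1}$. This is the instance for $\gamma_{n,d}$ of the standard fact that the Zariski closure of the image of a polynomial map $\mathbb{A}^m\to\mathbb{A}^n$ whose coordinates have degree at most $\delta$ has degree at most $\delta^m$.

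The main obstacle is the construction of $\psi$ together with the verification of its defining properties: that the recovery of $h$ from the leading coefficients of $f$ is genuinely triangular with an invertible diagonal --- this is exactly where the hypothesis $\char(\F)\nmid d$ is needed, and where the argument (indeed the injectivity of $\gamma_{n,d}$) fails in the wild case, as the composition collision in Example \ref{example1} illustrates --- and that the subsequent composition division yields honest polynomial formulas valid on all of $\mathbb{A}^n$, not merely on $C_{n,d}$. Once $\psi$ is in hand, the closed-immersion conclusion and the degree estimate are routine.
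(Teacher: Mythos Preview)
The paper does not prove this statement at all: it is recorded as a \emph{Fact} and attributed to \cite[Theorem 2.2]{gatmat17}, with no argument given here. So there is no ``paper's own proof'' to compare against.

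Your argument is correct and self-contained. The recovery of $h$ from the top $e-1$ coefficients of $f$ via the triangular system $f_{n-k}=d\,h_{e-k}+P_k(h_{e-1},\dots,h_{e-k+1})$ is exactly where the hypothesis $\char(\F)\nmid d$ enters, as you note; the subsequent recovery of $g_{d-1},g_{d-2},\dots$ by peeling off $h^d,\,g_{d-1}h^{d-1},\dots$ is the standard ``tame'' decomposition algorithm and does yield polynomial formulas in the coefficients of $f$. The existence of the global retraction $\psi$ makes $\gamma_{n,d}^*$ surjective, hence $\gamma_{n,d}$ is a closed immersion and $C_{n,d}\cong\mathbb{A}^{d+e-1}$, giving closedness, irreducibility and the dimension in one stroke. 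Your degree bound via pulling back a generic linear section through the isomorphism $\gamma_{n,d}$ and applying the B\'ezout inequality to $d+e-1$ equations of degree at most $d$ in $\mathbb{A}^{d+e-1}$ is clean; the observation that every coefficient of $g\circ h$ has total degree at most $d$ in the $g_i,h_j$ (with the extremal case coming from $h^d$) is the right ingredient.

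One small remark: you could shorten the closedness discussion by simply saying that $C_{n,d}$ is the fixed-point set of the morphism $\gamma_{n,d}\circ\psi\colon\mathbb{A}^n\to\mathbb{A}^n$, hence closed; this is what your equations $\gamma_{n,d}\circ\psi-\mathrm{id}=0$ express.
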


In all of the following, we
assume that $\char(\F)$ does not divide $d$. Our goal is to show that
Problem \ref{problem:interpolation} has an affirmative answer for a
suitably generic instance $(\bs\alpha, \bs\beta)\in \cF^{d+e-1}\times
\cF^{d+e-1}$, namely there exists an interpolation polynomial in
$\cnd$. For this purpose, we introduce the following incidence
variety:
\begin{equation}\label{defGamma}
\Gamma_{n,d}=\{(g,h,\bs\alpha,\bs\beta)\in P_d(\cF)\times P_e^*(\cF)\times \cF^{d+e-1}\times
\cF^{d+e-1} \colon g\circ h(\bs\alpha)=\bs\beta\}.
\end{equation}
We have the following result.
\begin{lemma}\label{lemma:geometryGamma_n,d}
$\Gamma_{n,d}$ is defined over $\F$, irreducible of dimension $2(d+e-1)$
and degree at most $n^{d+e-1}$.
\end{lemma}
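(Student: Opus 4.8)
The plan is to exhibit $\Gamma_{n,d}$ as the total space of an affine-bundle-like projection over the irreducible base $P_d(\cF)\times P_e^*(\cF)$, which is itself isomorphic to an affine space of dimension $d+e-1$. Consider the projection
\begin{equation*}
\pi\colon \Gamma_{n,d}\longrightarrow P_d(\cF)\times P_e^*(\cF),\qquad (g,h,\bs\alpha,\bs\beta)\mapsto (g,h).
\end{equation*}
For a fixed $(g,h)$, the fibre $\pi^{-1}(g,h)$ consists of all $(\bs\alpha,\bs\beta)\in\cF^{d+e-1}\times\cF^{d+e-1}$ with $\bs\beta=g\circ h(\bs\alpha)$; since $\bs\beta$ is a polynomial function of $\bs\alpha$, the fibre is the graph of the morphism $\bs\alpha\mapsto g\circ h(\bs\alpha)$ and is therefore isomorphic to $\cF^{d+e-1}$, in particular irreducible of dimension $d+e-1$. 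Hence $\pi$ is a surjective morphism all of whose fibres are irreducible of the same dimension $d+e-1$, from which $\Gamma_{n,d}$ is irreducible of dimension $(d+e-1)+(d+e-1)=2(d+e-1)$ by the standard fibre-dimension theorem (e.g.\ Shafarevich \cite{Shafarevich94}). Since all defining equations $g\circ h(\alpha_i)=\beta_i$ have coefficients in the prime field (indeed in $\Z$) and the construction is visibly Galois-stable, $\Gamma_{n,d}$ is defined over $\F$.

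For the degree bound I would make the fibration explicit by a coordinate change rather than quote a general bundle statement. Write $g=X^d+g_{d-1}X^{d-1}+\cdots+g_0$ and $h=X^e+h_{e-1}X^{e-1}+\cdots+h_1X$, and regard $(g_0,\ldots,g_{d-1},h_1,\ldots,h_{e-1},\alpha_1,\ldots,\alpha_{d+e-1})$ as free coordinates on an affine space $\mathbb A^{2(d+e-1)}$; then each $\beta_i=g\circ h(\alpha_i)$ is a polynomial in these $2(d+e-1)$ free coordinates of total degree $\le n+1$ (degree $\le d$ in the $g_j$'s, at most $e$ in the $h_j$'s and at most $n$ in $\alpha_i$, but one checks the degree of $g\circ h(\alpha_i)$ as a polynomial in all of these is governed by $g\circ h$ having degree $n=de$ in $X$, hence degree $n$ in $\alpha_i$ and degree $1$ in each $g_j$). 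Consequently $\Gamma_{n,d}$ is the graph of the morphism $\Phi\colon\mathbb A^{2(d+e-1)}\to\cF^{d+e-1}$, $\Phi=(\beta_1,\ldots,\beta_{d+e-1})$, i.e.\ it is cut out in $\mathbb A^{2(d+e-1)}\times\cF^{d+e-1}$ by the $d+e-1$ equations $\beta_i-\Phi_i(\text{coords})=0$. A graph of a morphism is isomorphic to its domain, so $\Gamma_{n,d}$ is a rational variety; to bound its degree as a subvariety of the ambient product one intersects with a generic linear space and uses the Bézout inequality together with the fact that $\deg(\beta_i-\Phi_i)\le n$.

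The main obstacle I anticipate is getting the sharp exponent $n^{d+e-1}$ in the degree bound, as opposed to something like $(n+1)^{d+e-1}$ or a bound with an extra factor. The careful point is that $g\circ h(\alpha_i)$, viewed as a polynomial in the $2(d+e-1)$ free coordinates, has total degree exactly $n=de$: it is linear in the coefficients $g_j$ of $g$ and, after substituting $h(\alpha_i)$, the monomial $g_j\cdot h(\alpha_i)^{\,j}$ contributes degree $1+j\cdot(\text{degree of }h(\alpha_i)\text{ in the }h\text{- and }\alpha\text{-coords})$; since $h(\alpha_i)=\alpha_i^e+h_{e-1}\alpha_i^{e-1}+\cdots+h_1\alpha_i$ has total degree $e$ in those coordinates, the worst term $g_d\cdot h(\alpha_i)^d$ with $g_d=1$ has degree $d\cdot e=n$ while lower terms have strictly smaller degree, so the bound is $n$ and not $n+1$. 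One then passes from $2(d+e-1)$ parameters to $\Gamma_{n,d}$ via the graph isomorphism and applies Bézout: intersecting the graph (which lies in an affine space of dimension $2(d+e-1)+(d+e-1)$, or better, is defined by $d+e-1$ hypersurfaces of degree $\le n$ after eliminating the trivial coordinates) with a generic linear subspace of complementary dimension yields $\deg\Gamma_{n,d}\le n^{d+e-1}$. I would also double-check irreducibility is not lost in char $p$ with $p\mid d$ excluded — but that hypothesis is in force throughout, and in any case irreducibility of $\Gamma_{n,d}$ here follows purely from the fibration over the (absolutely irreducible) affine base and does not use Fact \ref{th:geometryCnd}.
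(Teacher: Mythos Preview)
Your proof is correct and takes essentially the same route as the paper: both recognize $\Gamma_{n,d}$ as the graph of the morphism $(\bs g,\bs h,\bs\alpha)\mapsto (g\circ h)(\bs\alpha)$, hence isomorphic to the affine space $\cF^{2(d+e-1)}$ (the paper phrases this algebraically as the coordinate-ring isomorphism $\F[\Gamma_{n,d}]\cong\F[\bs A,\bs G,\bs H]$, you phrase it geometrically), and both then apply the B\'ezout inequality to the $d+e-1$ defining equations $B_i-(G\circ H)(A_i)=0$ of degree~$n$.

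One minor imprecision worth flagging: in your first paragraph you invoke the ``standard fibre-dimension theorem'' of Shafarevich to conclude irreducibility of the total space from irreducibility of the base and of all fibres. That theorem (\cite[\S 6.3, Theorems~7--8]{Shafarevich94}) goes the other way---it \emph{presupposes} irreducibility of the source and then bounds the dimension of fibres; irreducibility of a fibration with irreducible equidimensional fibres over an irreducible base is not automatic without an extra hypothesis such as flatness or openness. Fortunately your second paragraph's graph argument (``a graph of a morphism is isomorphic to its domain'') already delivers irreducibility and dimension in one stroke, so the proof stands; you could simply drop the fibration paragraph.
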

\begin{proof}
Let $\bs A=(A_1,\ldots,A_{d+e-1})$, $\bs B=(B_1,\ldots,B_{d+e-1})$,
$\bs G=(G_{d-1},\ldots,G_0)$ and $\bs H=(H_{e-1},\ldots,H_1)$ be
vectors of indeterminates over $\cF$. We denote by $\bs A$ and $\bs
B$ the coordinates of $\cF^{d+e-1}\times \cF^{d+e-1}$, and by $\bs
G$ and $\bs H$ the coordinates of $P_d(\cF)\times P_e^*(\cF)$. Let
$G,H\in \F[\bs G, \bs H, X]$ be the polynomials
$$G=X^d+G_{d-1}X^{d-1}+\cdots+G_0,\quad H=X^e+H_{e-1}X^{e-1}+\cdots
+H_1X,$$
and consider the $\F$-algebra morphism
$$\Phi \colon \F[\bs A,\bs B,\bs G,\bs H]\to \F[\bs A,\bs G,\bs H]$$
defined by $\Phi(B_i)=(G\circ H)(A_i)$ for $1\le i\le d+e-1$. It is
clear that $\Phi$ is surjective, and its kernel is
\begin{align*}
\mathrm{ker}\,\Phi&=\langle B_1-(G\circ H)(A_1),\ldots,
B_{d+e-1}-(G\circ
H)(A_{d+e-1})\rangle=\mathcal{I}(\Gamma_{n,d}),\end{align*}
where $\mathcal{I}(\Gamma_{n,d})\subset\F[\bs A,\bs B,\bs G,\bs H]$
denotes the ideal of polynomials vanishing on $\Gamma_{n,d}$. It
follows that
$$\overline{\Phi}:\F[\bs A,\bs B,\bs G,\bs H]/
\mathcal{I}(\Gamma_{n,d})\to \F[\bs A,\bs G,\bs H]$$
is an isomorphism. Since $\F[\bs A,\bs G,\bs H]$ is a domain,
$\Gamma_{n,d}$ is irreducible, and its dimension agrees with the
Krull dimension of $\F[\bs A,\bs G,\bs H]$, namely $2(d+e-1)$.

Since $\Gamma_{n,d}$ is defined by $d+e-1$ equations
 of degree at most $n=de$, the B\'ezout inequality (see, e.g., Heintz
\cite{Heintz83} or Fulton \cite{Fulton84}) implies the claimed upper bound on its degree.
\end{proof}

We have a natural projection
\begin{equation}\label{def:projection}
\pi \colon \Gamma_{n,d}\to \cF^{d+e-1}\times \cF^{d+e-1}.
\end{equation}
In these terms, Problem \ref{problem:interpolation} can be rephrased
as follows:
\begin{problem}\label{problem:interpolation2}
Given $\bs\alpha=(\alpha_1,\ldots,\alpha_{d+e-1})\in \cF^{d+e-1}$
with $\alpha_i\not=\alpha_j$ for $i\not=j$, and
$\bs\beta=(\beta_1,\ldots,\beta_{d+e-1})\in \cF^{d+e-1}$, is
the fiber $\pi^{-1}(\bs\alpha,\bs\beta)$ nonempty?
\end{problem}

Our next result shows that Problem \ref{problem:interpolation2} has
a positive answer for most $(\bs\alpha,\bs\beta)\in
\cF^{d+e-1}\times \cF^{d+e-1}$.
\begin{corollary}\label{coro:ExistenceGenericInterpolant}
The projection mapping $\pi \colon \Gamma_{n,d}\to \cF^{d+e-1}\times
\cF^{d+e-1}$ is dominant. In particular, if $R^{(2)}(\bs \alpha) \neq 0$,
then the fiber
$\pi^{-1}(\bs\alpha,\bs\beta)$ is of dimension zero.
\end{corollary}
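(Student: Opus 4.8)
The plan is to deduce the corollary from the structural results already in hand. For dominance of $\pi$, observe that $\Gamma_{n,d}$ is irreducible of dimension $2(d+e-1)$ by Lemma \ref{lemma:geometryGamma_n,d}, while the target $\cF^{d+e-1}\times\cF^{d+e-1}$ has the same dimension $2(d+e-1)$ and is irreducible. A dominant morphism out of an irreducible variety can be detected by exhibiting a single fiber of the expected dimension, namely dimension $0$: indeed, if some fiber $\pi^{-1}(\bs\alpha,\bs\beta)$ is finite and nonempty, then by the theorem on the dimension of fibers the image contains a dense open subset of the target, so $\pi$ is dominant. (Alternatively, one argues that the closure of the image is an irreducible closed subvariety whose dimension equals $\dim\Gamma_{n,d}$ minus the generic fiber dimension; producing one zero-dimensional nonempty fiber forces the generic fiber dimension to be $0$, hence the image is dense.)

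So the crux is to produce one point $(\bs\alpha,\bs\beta)$ with $\pi^{-1}(\bs\alpha,\bs\beta)$ nonempty and finite. This is exactly what Corollary \ref{coro:zero_dimensional_fiber_pi} supplies: choosing any $\bs\alpha\in\F^{d+e-1}$ with $R^{(2)}(\bs\alpha)\neq 0$ (such $\bs\alpha$ exists over $\cF$, and over $\F$ itself when $\F$ is large enough, since $R^{(2)}$ is a nonzero polynomial by Lemma \ref{lemma:cardinality2}) and taking $\bs\beta=\bs 0$, the fiber $\pi^{-1}(\bs\alpha,\bs 0)$ is finite of cardinality $S_2(d+e-1,d)\geq 1$ and all its points are defined over $\F$. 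Hence $\pi$ is dominant.

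For the second assertion, fix any $\bs\alpha$ with $R^{(2)}(\bs\alpha)\neq 0$ and an arbitrary $\bs\beta\in\cF^{d+e-1}$. I would bound $\dim\pi^{-1}(\bs\alpha,\bs\beta)$ by isolating the role of $h$. Any point of the fiber is a pair $(g,h)$ with $g\circ h(\bs\alpha)=\bs\beta$; in particular $h(\alpha_i)=h(\alpha_j)$ forces $\beta_i=\beta_j$, so the partition ${\mathcal P}_{h(\bs\alpha)}$ refines ${\mathcal P}_{\bs\beta}$, and by Lemma \ref{lemma:cardinality1} (applicable since $R^{(2)}$ is divisible by $R^{(1)}$) we have $\#h(\bs\alpha)\geq d$. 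Since the $d+e-1$ coordinates of $h(\bs\alpha)$ take at least $d$ distinct values, at most $(d+e-1)-d=e-1$ of the equations $h(\alpha_i)=h(\alpha_{i'})$ (for $i,i'$ in a common block, with one representative per block fixed) are independent, and exactly $e-1$ of them — corresponding to a $d$-refinement of ${\mathcal P}_{h(\bs\alpha)}$ — have a square nonsingular coefficient matrix of the shape \eqref{matrixEquation2}, whose determinant $S_{\mathcal P}$ is a factor of $R^{(2)}$ and hence nonzero at $\bs\alpha$. Thus $h$ lies in the finite set ${\mathcal C}_{\bs\alpha}$, which has cardinality $S_2(d+e-1,d)$ by Lemma \ref{lemma:cardinality2}. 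For each of these finitely many $h$, the $d$ interpolation conditions $g(h(\alpha_{i_k}))=\beta_{i_k}$ at the $d$ distinct values $h(\alpha_{i_k})$ determine the monic degree-$d$ polynomial $g$ uniquely. Therefore the fiber is finite, i.e. of dimension zero.

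The main obstacle is purely bookkeeping: making precise that the fiber condition on $h$ reduces to membership in ${\mathcal C}_{\bs\alpha}$, i.e. that no extraneous $h$ with $\#h(\bs\alpha)<d$ or with $\#h(\bs\alpha)\geq d$ but failing the nonsingularity can sneak in. Both are ruled out by the divisibility $R^{(1)}\mid R^{(2)}$ and $S_{\mathcal P}\mid R^{(2)}$ together with the hypothesis $R^{(2)}(\bs\alpha)\neq 0$, so once this is spelled out the argument closes. One should also note that the finiteness bound $S_2(d+e-1,d)$ on the number of $h$, times the uniqueness of $g$ given $h$, gives the a priori estimate $\#\pi^{-1}(\bs\alpha,\bs\beta)\leq S_2(d+e-1,d)$ for every such $\bs\alpha$ and every $\bs\beta$, which is what "dimension zero" means here.
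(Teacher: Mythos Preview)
Your proof of dominance is correct and is essentially the paper's own argument: exhibit one nonempty zero-dimensional fiber (namely $\pi^{-1}(\bs\alpha,\bs 0)$ for some $\bs\alpha$ with $R^{(2)}(\bs\alpha)\neq 0$, furnished by Corollary~\ref{coro:zero_dimensional_fiber_pi}), then invoke the theorem on the dimension of fibers to force $\dim\overline{\pi(\Gamma_{n,d})}=2(d+e-1)$.

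Your direct argument for the second assertion, however, has a genuine gap. From $R^{(1)}\mid R^{(2)}$ and Lemma~\ref{lemma:cardinality1} you correctly obtain $\#h(\bs\alpha)\geq d$ for every $(g,h)$ in the fiber. But you then assert $h\in\mathcal C_{\bs\alpha}$, i.e.\ $\#h(\bs\alpha)\leq d$, and this is nowhere established. Concretely, if $\#h(\bs\alpha)=m>d$, then $\mathcal P_{h(\bs\alpha)}$ is an $m$-partition; it has only $d+e-1-m<e-1$ collision relations $h(\alpha_i)=h(\alpha_{i'})$, and it admits no $d$-refinement at all (refinements increase the number of blocks). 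So the square $(e-1)\times(e-1)$ system \eqref{matrixEquation2} is simply not available for such $h$, and nothing in your argument bounds how many of them can occur in the fiber. Consequently the final count $\#\pi^{-1}(\bs\alpha,\bs\beta)\leq S_2(d+e-1,d)$ is not justified; in fact it cannot be correct in general, since the paper only proves $S_2(d+e-1,d)\leq\deg\pi$ in \eqref{piBounds} and the generic fiber has exactly $\deg\pi$ points.

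The paper does not attempt such a pointwise count. It simply reuses the theorem on the dimension of fibers: once $\pi$ is known to be dominant between irreducible varieties of the same dimension, the fiber over a generic point of the target has dimension zero. If you want to salvage a direct argument for the precise statement (zero-dimensional fiber for \emph{every} $\bs\beta$ once $R^{(2)}(\bs\alpha)\neq 0$), you would need to control the $h$ with $\#h(\bs\alpha)>d$, for instance by showing that the $e-1$ compatibility conditions on $g$ they impose cut out only finitely many $h$; this is not immediate and is not what the paper does.
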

\begin{proof}
Lemma \ref{lemma:geometryGamma_n,d} shows that $\Gamma_{n,d}$ is
irreducible of dimension $2(d+e-1)$. Consider $\pi \colon \Gamma_{n,d}\to
\overline{\pi(\Gamma_{n,d})}$ and observe that both $\Gamma_{n,d}$
and $\overline{\pi(\Gamma_{n,d})}$ are irreducible varieties, where
$\overline{\pi(\Gamma_{n,d})}$ is the Zariski closure of
$\pi(\Gamma_{n,d})$ in $\cF^{d+e-1}\times \cF^{d+e-1}$. For an
$\bs\alpha\in \cF^{d+e-1}$ with  $R^{(2)}(\bs \alpha) \neq 0$,
the fiber
$\pi^{-1}(\bs\alpha,\bs 0)$ has dimension zero by Corollary \ref{coro:zero_dimensional_fiber_pi}.
 Therefore, the theorem on the dimension of fibers (see, e.g., \cite[\S 6.3, Theorem
7]{Shafarevich94}) shows that
$$2(d+e-1)-\dim\overline{\pi(\Gamma_{n,d})}=\dim\Gamma_{n,d}-\dim\overline{\pi(\Gamma_{n,d})}\le 0.$$
It follows that $2(d+e-1)\le \dim\overline{\pi(\Gamma_{n,d})}$, that
is, $\overline{\pi(\Gamma_{n,d})}=\cF^{d+e-1}\times \cF^{d+e-1}$.
This proves the first assertion. Finally, taking into account that
$\Gamma_{n,d}$ and $\cF^{d+e-1}\times \cF^{d+e-1}$ are both
irreducible varieties of dimension $2(d+e-1)$, the second assertion
is a standard consequence of the theorem on the dimension of fibers.
\end{proof}

We may paraphrase the statement of Corollary
\ref{coro:ExistenceGenericInterpolant} in terms of the interpolation
problem in the following way: for any $(\bs\alpha,\bs\beta)$ with $R^{(2)}(\bs \alpha) \neq 0$,
there is a finite positive number of solutions $f=g\circ
h\in\cnd$ to the interpolation problem defined by
$(\bs\alpha,\bs\beta)$. Now we discuss the possible number of
interpolants.

Since the projection
$$\pi \colon \Gamma_{n,d}\to \cF^{d+e-1}\times \cF^{d+e-1}$$
is a dominant mapping defined over $\F$, it induces a finite field
extension $\F(\bs A,\bs B)\hookrightarrow \F(\Gamma_{n,d})$ between
the fields of rational functions on $\cF^{d+e-1}\times \cF^{d+e-1}$
and on $\Gamma_{n,d}$ defined over $\F$. The degree of this field
extension is the {\em degree of the morphism} $\pi$ and is denoted
by $\deg\pi$.

Assume that the field extension $\F(\bs A,\bs
B)\hookrightarrow\F(\Gamma_{n,d})$ is separable (in Proposition
\ref{prop:FieldExtensionFGammaIsSeparable} below we prove this
assertion). By, e.g., \cite[Proposition 1]{Heintz83},
$\#\pi^{-1}(\bs \alpha,\bs\beta)\le\deg\pi$ for any
$(\bs\alpha,\bs\beta)\in \cF^{d+e-1}\times \cF^{d+e-1}$ with a
finite fiber, with equality in a Zariski open subset of
$\cF^{d+e-1}\times \cF^{d+e-1}$. In particular, from Corollary
\ref{coro:zero_dimensional_fiber_pi} we deduce that
$$\deg\pi\ge S_2({d+e-1,d}).$$

We also have an easy upper bound.
\begin{lemma}\label{lemma:UpperBoundDegPi}
We have $\deg\pi\le \deg\cnd\le d^{d+e-1}$.
\end{lemma}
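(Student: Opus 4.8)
The plan is to relate $\deg\pi$ to $\deg\cnd$ by comparing the morphism $\pi\colon\Gamma_{n,d}\to\cF^{d+e-1}\times\cF^{d+e-1}$ with the parametrization $\gnd\colon P_d\times P_e^*\to\cnd$. The key observation is that a solution $(g,h)\in\pi^{-1}(\bs\alpha,\bs\beta)$ is determined by the pair $(g,h)\in P_d(\cF)\times P_e^*(\cF)$, and the composition $f=g\circ h$ lies in $\cnd(\cF)$; conversely $f$ together with $(\bs\alpha,\bs\beta)$ severely constrains which fibers it contributes to. More precisely, for generic $(\bs\alpha,\bs\beta)$ in the dense open set where the fiber has exactly $\deg\pi$ points, I would argue that distinct points of $\pi^{-1}(\bs\alpha,\bs\beta)$ give rise to distinct decomposable polynomials $f=g\circ h$, each of which is forced to satisfy the $d+e-1$ interpolation conditions $f(\alpha_i)=\beta_i$. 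Since $\cnd$ has dimension $d+e-1$, imposing $d+e-1$ generic linear (interpolation) conditions cuts it down to finitely many points, and that finite number is bounded by $\deg\cnd$ by Bézout.

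Concretely, here is the sequence of steps. First, I would note that there is a morphism $\Gamma_{n,d}\to\cnd\times\cF^{d+e-1}\times\cF^{d+e-1}$ sending $(g,h,\bs\alpha,\bs\beta)$ to $(g\circ h,\bs\alpha,\bs\beta)$, and that composing with the projection to the $(\bs\alpha,\bs\beta)$-factors recovers $\pi$. Second, I would fix a generic $(\bs\alpha,\bs\beta)$ (in particular with $R^{(2)}(\bs\alpha)\neq0$, so by Corollary \ref{coro:ExistenceGenericInterpolant} the fiber is zero-dimensional, hence of size $\deg\pi$ on a dense open set) and count: each $(g,h)$ in the fiber yields an $f=g\circ h\in\cnd$ with $f(\alpha_i)=\beta_i$ for all $i$. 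Third, the $d+e-1$ affine-linear conditions $f(\alpha_i)=\beta_i$, viewed on the ambient space $P_n(\cF)\cong\cF^n$ of coefficient vectors, are hyperplanes; intersecting $\cnd$ with them, for generic $\bs\alpha$, gives a finite set of cardinality at most $\deg\cnd$ by the Bézout inequality (the $d+e-1$ hyperplanes meet the $(d+e-1)$-dimensional $\cnd$ in dimension zero for generic choice). Fourth, the map $(g,h)\mapsto g\circ h$ restricted to the fiber $\pi^{-1}(\bs\alpha,\bs\beta)$ is injective for generic $(\bs\alpha,\bs\beta)$: if two pairs gave the same $f$, then already the value data is consistent, but genericity of $\bs\beta$ (equivalently, $\bs\beta$ not lying in the image of a smaller-dimensional locus where $f$ has multiple preimages under $\gnd$ all meeting the constraints) rules this out — alternatively one uses that the generic fiber of $\gnd$ itself is finite of controlled size and a dimension count shows the non-injective locus is not dominant. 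Combining, $\deg\pi=\#\pi^{-1}(\bs\alpha,\bs\beta)\le\#(\cnd\cap\{f(\alpha_i)=\beta_i\})\le\deg\cnd\le d^{d+e-1}$, the last inequality being Fact \ref{th:geometryCnd}.

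The main obstacle is the injectivity claim in the fourth step — controlling how many pairs $(g,h)$ can map to a single $f$ while all satisfying the interpolation constraints. There are two clean ways around it. One is to sidestep it entirely: instead of counting, use that $\pi$ factors (up to the obvious closure) through $\cnd\times\cF^{d+e-1}$ via $(g,h,\bs\alpha,\bs\beta)\mapsto(g\circ h,\bs\alpha)$ followed by the evaluation $f\mapsto f(\bs\alpha)=\bs\beta$, so that $\deg\pi$ divides (or is at most) the degree of the evaluation morphism $\cnd\times\cF^{d+e-1}\to\cF^{d+e-1}\times\cF^{d+e-1}$, $(f,\bs\alpha)\mapsto(\bs\alpha,f(\bs\alpha))$, whose generic fiber is $\cnd$ cut by $d+e-1$ generic hyperplanes, of size $\le\deg\cnd$ by Bézout. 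This avoids any injectivity statement because it works with morphism degrees and the multiplicativity of degrees in towers. The second, more hands-on route is to invoke that over the dense open set the fiber of $\gnd$ is a single point up to the known symmetries (or at least finite), but that requires more care about the structure of $\gnd$. I would present the first route, as it is the shortest and most robust: the inequality $\deg\pi\le\deg\cnd$ follows from the tower $\F(\bs A,\bs B)\hookrightarrow\F(\cnd\times\cF^{d+e-1})\hookrightarrow\F(\Gamma_{n,d})$ together with $[\F(\cnd\times\cF^{d+e-1}):\F(\bs A,\bs B)]\le\deg\cnd$, the latter being exactly the statement that a generic system of $d+e-1$ linear constraints on the $(d+e-1)$-dimensional variety $\cnd$ has at most $\deg\cnd$ solutions.
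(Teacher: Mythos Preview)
Your overall strategy is the same as the paper's: pick $(\bs\alpha,\bs\beta)$ with $\#\pi^{-1}(\bs\alpha,\bs\beta)=\deg\pi$, map the fiber into $\{f\in\cnd:f(\alpha_i)=\beta_i\}$ via $(g,h)\mapsto g\circ h$, and bound the latter by $\deg\cnd$ using B\'ezout (the interpolation conditions cut $\cnd$ by an affine linear subspace). The paper's proof is exactly this, in three lines; it simply asserts that the fiber ``is isomorphic to'' the linear section of $\cnd$. You are more careful in flagging the injectivity of $(g,h)\mapsto g\circ h$ on the fiber as the point that needs justification.

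However, your proposed ``first route'' to sidestep the injectivity contains an error. In the factorization
\[
\Gamma_{n,d}\ \xrightarrow{\ \psi_1\ }\ \cnd\times\cF^{d+e-1}\ \xrightarrow{\ \psi_2\ }\ \cF^{d+e-1}\times\cF^{d+e-1},
\]
multiplicativity of degrees gives $\deg\pi=\deg\psi_1\cdot\deg\psi_2$, hence $\deg\pi\ge\deg\psi_2$ and $\deg\psi_2\mid\deg\pi$, which is the opposite of what you wrote. The tower does not bound $\deg\pi$ from above by the evaluation degree unless you also know $\deg\psi_1=1$, i.e., that $\gnd$ is birational onto $\cnd$. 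So the tower argument does \emph{not} avoid the injectivity question; it is equivalent to it. Your claim that ``this avoids any injectivity statement because it works with morphism degrees and the multiplicativity of degrees in towers'' is therefore incorrect.

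The honest fix is the one both you (in your ``second route'') and the paper implicitly rely on: under the standing assumption $\char(\F)\nmid d$, the parametrization $\gnd\colon P_d\times P_e^*\to\cnd$ is generically injective, so for a generic $(\bs\alpha,\bs\beta)$ achieving $\#\pi^{-1}(\bs\alpha,\bs\beta)=\deg\pi$ one may further arrange that every point of the fiber lies in the open locus where $\gnd$ is injective. Then the map from the fiber to the linear section of $\cnd$ is injective, and the B\'ezout bound finishes the proof. You should state and use this directly rather than claiming the tower argument circumvents it.
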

\begin{proof}
Let $(\bs \alpha,\bs\beta)\in \cF^{d+e-1}\times \cF^{d+e-1}$ with
$\#\pi^{-1}(\bs \alpha,\bs\beta)=\deg\pi$. Then $\pi^{-1}(\bs
\alpha,\bs\beta)$ is isomorphic to the zero-dimensional variety
$$\{f \in \cnd\colon f(\alpha_i)=\beta_i\ \text{ for } 1\le i\le d+e-1 \},$$
which is the intersection of $\cnd$ with an affine linear subspace
of $\cF^n$. The B\'ezout inequality (see, e.g.,
\cite{Heintz83} or \cite{Fulton84}) implies the first
inequality, and the second one follows from Theorem
\ref{th:geometryCnd}.
\end{proof}

Corollary \ref{coro:zero_dimensional_fiber_pi} and Lemma
\ref{lemma:UpperBoundDegPi} imply that
\begin{equation}
\label{piBounds}
S_2({d+e-1,d})\le\deg\pi\le d^{d+e-1}.
\end{equation}
To compare both sides of the inequality, we recall that
$d!\,S_2(d+e-1,d)$ is the number of surjective functions from
$\{1,\ldots,d+e-1\}$ to $\{1,\ldots,d\}$ (see, e.g., Mez\"o \cite[page
17]{Mezo20}), while $d^{d+e-1}$ is the total number of functions
from $\{1,\ldots,d+e-1\}$ to $\{1,\ldots,d\}$.

%
%------------------------------------------------------------------------
%------------------------------------------------------------------------
%------------------------------------------------------------------------
%------------------------------------------------------------------------
%
Next we want to show that $\#\pi^{-1}(\bs \alpha,\bs\beta)=\deg\pi$ for most inputs.
For this purpose, we have to understand
 the ramification
locus of the projection $\pi \colon \Gamma_{n,d}\to \cF^{d+e-1}\times
\cF^{d+e-1}$ and the separability of the corresponding field extension. For $(g,h,\bs\alpha,\bs
\beta)\in\pi^{-1}(\bs\alpha,\bs \beta)$, $\pi$ is {\em unramified}
at $(g,h,\bs\alpha,\bs\beta)$ if the differential mapping
$d_{(g,h,\bs\alpha,\bs\beta)}\pi \colon T_{(g,
h,\bs\alpha,\bs\beta)}\Gamma_{n,d}\to
T_{(\bs\alpha,\bs\beta)}(\cF^{d+e-1}\times \cF^{d+e-1})$ is
injective.

Since the polynomials $Q_i=(G\circ H)(A_i)-B_i$ with $1\le i\le d+e-1 $
defining the incidence variety $\Gamma_{n,d}$ generate a radical
ideal, it follows that the tangent space $T_{(g,
h,\bs\alpha,\bs\beta)}$ of any
$(g,h,\bs\alpha,\bs\beta)\in\Gamma_{n,d}$ is the kernel of the
Jacobian matrix $(\partial \bs Q/(\partial(\bs G,\bs H,\bs A,\bs
B))(g,h,\bs\alpha,\bs\beta)$ of $\bs Q=(Q_1,\ldots,Q_{d+e-1})$ with
respect to $\bs G,\bs H,\bs A,\bs B$ at $(g, h,\bs\alpha,\bs\beta)$.
Since $\pi$ is the linear mapping defined by the projection to
$\cF^{d+e-1}\times \cF^{d+e-1}$, it is
unramified at $(g,h,\bs\alpha,\bs\beta)$ if and only if the Jacobian
matrix $((\partial \bs Q)/(\partial(\bs G,\bs
H))(g,h,\bs\alpha,\bs\beta)$ is of full rank.

Our first result considers the special case $(\bs\alpha,\bs 0)$.
\begin{lemma}
\label{lemma:unramifiedalpha0} For an $\bs\alpha$ with $R^{(2)} (\bs
\alpha) \neq 0$, $\pi$ is unramified at any point $(g,
h,\bs\alpha,\bs 0)\in\pi^{-1}(\bs\alpha,\bs 0)$.
\end{lemma}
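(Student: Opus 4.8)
The plan is to unwind the unramifiedness criterion established just before the statement: $\pi$ is unramified at $(g,h,\bs\alpha,\bs 0)$ if and only if the Jacobian matrix $J = (\partial \bs Q/\partial(\bs G,\bs H))(g,h,\bs\alpha,\bs 0)$ has full rank $d+e-1$. Here $Q_i = (G\circ H)(A_i) - B_i$, so the $i$th row of $J$ consists of the partial derivatives of $g(h(X))$ with respect to the coefficients $G_{d-1},\ldots,G_0$ of $g$ and $H_{e-1},\ldots,H_1$ of $h$, all evaluated at $X=\alpha_i$. Writing $h(X) = X^e + h_{e-1}X^{e-1}+\cdots+h_1X$, one computes $\partial(g\circ h)/\partial G_j = h(X)^j$ and $\partial(g\circ h)/\partial H_k = g'(h(X))\cdot X^k$. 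So the $i$th row of $J$ is
\[
\bigl(\, h(\alpha_i)^{d-1},\, \ldots,\, h(\alpha_i),\, 1,\;\; g'(h(\alpha_i))\,\alpha_i^{e-1},\, \ldots,\, g'(h(\alpha_i))\,\alpha_i \,\bigr).
\]

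Next I would exploit Corollary \ref{coro:zero_dimensional_fiber_pi}: since $R^{(2)}(\bs\alpha)\neq 0$, for any $(g,h,\bs\alpha,\bs 0)\in\pi^{-1}(\bs\alpha,\bs 0)$ we have $\#h(\bs\alpha) = d$, and $h\in\mathcal C_{\bs\alpha}$, so $\alpha_i\neq\alpha_j$ for $i\neq j$. Group the indices $\{1,\ldots,d+e-1\}$ into the $d$ blocks $P_1,\ldots,P_d$ of the partition $\mathcal P_{h(\bs\alpha)}$; on block $P_k$ the value $h(\alpha_i)$ is a constant, say $v_k$, and the $v_k$ are pairwise distinct. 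Since $g\circ h(\alpha_i) = 0$ for all $i$, the $d$ distinct values $v_1,\ldots,v_d$ are exactly the $d$ roots of the degree-$d$ polynomial $g$; hence $g = \prod_{k=1}^d (X - v_k)$ is separable and $g'(v_k)\neq 0$ for every $k$. Thus on the block $P_k$ the first $d$ entries of every row are the fixed vector $(v_k^{d-1},\ldots,v_k,1)$, while the last $e-1$ entries are $g'(v_k)\cdot(\alpha_i^{e-1},\ldots,\alpha_i)$ with the nonzero scalar $g'(v_k)$ depending only on $k$.

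The main step is then a rank computation. I would show $J$ has full rank by contradiction: suppose $c_0\cdot\mathbf 1 + c_1 C_1 + \cdots + c_{d-1}C_{d-1} + c'_1 C'_1 + \cdots + c'_{e-1} C'_{e-1} = 0$ is a linear dependence among the columns $C_j$ (powers of $h(\alpha_i)$) and $C'_k$ (the $g'(h(\alpha_i))\alpha_i^k$). Evaluating on block $P_k$: since the $\alpha_i$ with $i\in P_k$ are distinct and $|P_k| = \#P_k$, and the polynomial $c_0 + c_1 X + \cdots + c_{d-1}X^{d-1}$ contributes only the constant $p(v_k) := \sum_j c_j v_k^j$ on that block while $g'(v_k)\sum_{k'} c'_{k'}X^{k'}$ vanishes at all $\#P_k$ points $\alpha_i$, $i\in P_k$ — a polynomial of degree $\le e-1$ with zero constant term. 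Summing block sizes gives $\sum_k (\#P_k - 1) = e-1$ constraints on the $e-1$ coefficients $c'_1,\ldots,c'_{e-1}$; this is precisely the linear system whose matrix (after dividing row-blocks by the nonzero $g'(v_k)$) is $S_{\mathcal P}$ from Lemma \ref{lemma:cardinality2} for $\mathcal P = \mathcal P_{h(\bs\alpha)}$, and $S_{\mathcal P}(\bs\alpha)\neq 0$ since $R^{(2)}(\bs\alpha)\neq 0$. Hence all $c'_{k'} = 0$. But then the constant $p(v_k)$ must equal $0$ for each of the $d$ distinct $v_k$, and $p$ has degree $\le d-1$, forcing $p = 0$, i.e. all $c_j = 0$. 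Therefore the columns of $J$ are independent, $J$ has full rank, and $\pi$ is unramified at $(g,h,\bs\alpha,\bs 0)$.

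The main obstacle I anticipate is keeping the bookkeeping honest in the rank argument — in particular, checking that the ``$c'$-part'' of the dependence on each block $P_k$ really produces exactly the rows of the matrix $S_{\mathcal P}$ of \eqref{matrixEquation2} (up to the nonzero scalars $g'(v_k)$ and a transpose/column-versus-row viewpoint), so that the hypothesis $R^{(2)}(\bs\alpha)\neq 0$ can be invoked verbatim. Once that identification is in place, the separability of $g$ (which is where the fiber being over $\bs 0$ rather than a general $\bs\beta$ is used) makes the final elimination of the $c_j$ immediate.
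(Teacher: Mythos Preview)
Your argument is correct and essentially the paper's own: both write out the Jacobian explicitly, use $\#h(\bs\alpha)=d$ to see that $g=\prod_k(X-v_k)$ is separable (hence $g'(v_k)\neq 0$), and reduce the full-rank question to the nonsingularity of the $(e-1)\times(e-1)$ matrix $S_{\mathcal P}$ of \eqref{matrixEquation2}, which holds because $R^{(2)}(\bs\alpha)\neq 0$. The only cosmetic difference is that the paper performs explicit row operations (subtracting row $i_j$ from row $d+j$) to reach block upper-triangular form with a Vandermonde block and the block $S_{\mathcal P}$, whereas you phrase the same reduction as a column-kernel computation; your anticipated bookkeeping worry is on target---taking differences of rows within each block $P_k$ is what eliminates the constant $p(v_k)$ and produces precisely the rows of $S_{\mathcal P}$.
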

\begin{proof}
We let $q=d+e-1$ and consider the $q \times q$ Jacobian matrix
\linebreak $J=(\partial \bs Q/(\partial(\bs G,\bs
H))(g,h,\bs\alpha,\bs 0)$:
\begin{equation}
\label{Jacobian dQ/dG,H}
J= \left(\begin{array}{ccccccc} h(\alpha_1)^{d-1} &
\cdots & h(\alpha_1) & \!\!1\!\! &
g'(h(\alpha_1))\,\alpha_1^{e-1}&\cdots &
g'(h(\alpha_1))\,\alpha_1\\
\vdots & &\vdots &\!\!\vdots\!\! & \vdots & & \vdots\\
h(\alpha_d)^{d-1} &\cdots & h(\alpha_d) & \!\!1\!\! &
g'(h(\alpha_d))\,\alpha_d^{e-1}&\cdots &
g'(h(\alpha_d))\,\alpha_d\\[1ex]
h(\alpha_{d+1})^{d-1} &\cdots & h(\alpha_{d+1}) & \!\!1\!\! &
g'(h(\alpha_{d+1}))\,\alpha_{d+1}^{e-1}\!\!&\cdots &
\!\!g'(h(\alpha_{d+1}))\,\alpha_{d+1}\\
\vdots & &\vdots &\!\!\vdots\!\! & \vdots & & \vdots\\
h(\alpha_{q})^{d-1} & \cdots & h(\alpha_{q}) & \!\!1\!\! &
g'(h(\alpha_{q}))\,\alpha_{q}^{e-1}&\cdots &
g'(h(\alpha_{q}))\,\alpha_{q}
\end{array}\right).
\end{equation}
From Corollary \ref{coro:zero_dimensional_fiber_pi} we have $\#
h(\bs\alpha)=d$ and by Lemma \ref{lemma:cardinality2}
that $h(\alpha_i)\not= h(\alpha_j)$ for $1\le i<j\le d$, while each
of the remaining values $h(\alpha_{d+j})$ agrees with an
$h(\alpha_{i_j})$ with $1\le i_j\le d$ for $1\le j\le e-1$. As a
consequence, the upper-left $d\times d$ submatrix $J_1$ of $J$ is
invertible. Further, by subtracting the $i_j$th row from the
$(d+j)$th row of $J$ for $1\le j\le e-1$, we obtain a $q\times q$
matrix with the same rank as $J$ and of the following shape:
$$ \left(\begin{array}{c|c}J_1 &  * \\\hline
\bs 0 &\begin{array}{ccc} g'(h(\alpha_{d+1}))\,(\alpha_{d+1}^{e-1}-
\alpha_{i_1}^{e-1})&\cdots & g'(h(\alpha_{d+1}))\,(\alpha_{d+1}-
\alpha_{i_1})\\[1ex]
\vdots & & \vdots\\
g'(h(\alpha_{q}))(\alpha_{q}^{e-1}-\alpha_{i_{e-1}}^{e-1})&\cdots
& g'(h(\alpha_{q}))(\alpha_{q}-\alpha_{i_{e-1}})\end{array}
\end{array}\right).$$
It follows that $J$ is invertible if and only if this $(e-1)\times
(e-1)$ lower-right submatrix $J_2$ is invertible.

Since $g$ is monic polynomial of degree $d$ vanishing on
$h(\bs\alpha)$ and $\# h(\bs\alpha)=d$, any $h(\alpha_i)$ is a
simple root of $g$. This implies that $g'(h(\alpha_{d+i}))\not=0$
for $1\le i\le e-1$. We conclude that $J_2$ is invertible if and
only if the matrix
$$J_3=\left(\begin{array}{ccc} \alpha_{d+1}^{e-1}-
\alpha_{i_1}^{e-1}&\cdots & \alpha_{d+1}-
\alpha_{i_1}\\[1ex]
\vdots & & \vdots\\
\alpha_{q}^{e-1}-\alpha_{i_{e-1}}^{e-1}&\cdots &
\alpha_{q}-\alpha_{i_{e-1}}\end{array}\right)$$
is nonsingular. Up to row and column permutations and signs,
 $J_3$ is the matrix in \eqref{matrixEquation2}.
(In the numbering of \eqref{matrixEquation2}, $d+j$ is in the same block
of the partition as $i_j$.)
Since $\poly^{(2)}(\bs\alpha) \neq 0$, it follows that
$J_3$ is nonsingular.
\end{proof}

In fact, the lemma's statement and proof are valid for any $\bs \beta$ with $\# \bs \beta \leq d$,
not just $\bs \beta = \bs 0$,
using Proposition \ref{interpolForSmallSizeBeta} instead of Corollary \ref{coro:zero_dimensional_fiber_pi}.

Next we show that the field extension $\F(\bs A,\bs
B)\hookrightarrow\F(\Gamma_{n,d})$ is separable.
This will show that $\#\pi^{-1}(\bs\alpha,\bs\beta)=\deg\pi$ for
any suitably generic $(\bs\alpha,\bs\beta)$.
\begin{proposition}\label{prop:FieldExtensionFGammaIsSeparable}
The field extension $\F(\bs A,\bs B)\hookrightarrow\F(\Gamma_{n,d})$
is separable.
\end{proposition}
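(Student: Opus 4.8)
The plan is to verify separability by exhibiting a single point of $\Gamma_{n,d}$ at which the morphism $\pi$ is unramified; since $\F(\bs A, \bs B) \hookrightarrow \F(\Gamma_{n,d})$ is a finite extension and $\Gamma_{n,d}$ is irreducible and defined over $\F$ (Lemma \ref{lemma:geometryGamma_n,d}), the existence of one smooth point of the fibration forces the extension to be separable. Concretely, I would argue as follows. The dominant morphism $\pi$ induces the field extension in question, and the extension is separable if and only if $\pi$ is generically smooth, i.e. unramified on a nonempty (hence dense) open subset of $\Gamma_{n,d}$. Unramifiedness at a point $(g,h,\bs\alpha,\bs 0)$ was characterized in the discussion preceding Lemma \ref{lemma:unramifiedalpha0} as full rank of the Jacobian block $(\partial \bs Q / \partial(\bs G, \bs H))(g,h,\bs\alpha,\bs 0)$, and Lemma \ref{lemma:unramifiedalpha0} shows precisely that this holds whenever $R^{(2)}(\bs\alpha) \neq 0$.

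The key steps, in order, are: (1) By Corollary \ref{coro:ExistenceGenericInterpolant}, $\pi$ is dominant, so it restricts to a dominant morphism onto $\cF^{d+e-1}\times\cF^{d+e-1}$, and one has a corresponding finite field extension $\F(\bs A,\bs B)\hookrightarrow \F(\Gamma_{n,d})$. (2) Pick any $\bs\alpha \in \F^{d+e-1}$ with $R^{(2)}(\bs\alpha) \neq 0$; such $\bs\alpha$ exists since $R^{(2)}$ is a nonzero polynomial (Lemma \ref{lemma:cardinality2}) — here one should note $\F$ may need to be replaced by a finite extension if $\F$ is too small, but since separability is insensitive to such base extension this is harmless, or alternatively work with the generic point. (3) By Corollary \ref{coro:zero_dimensional_fiber_pi}, the fiber $\pi^{-1}(\bs\alpha,\bs 0)$ is nonempty (of cardinality $S_2(d+e-1,d) \geq 1$), so choose a point $(g,h,\bs\alpha,\bs 0)$ in it. (4) By Lemma \ref{lemma:unramifiedalpha0}, $\pi$ is unramified at this point, i.e. the relevant Jacobian is invertible. (5) Invertibility of the Jacobian at one point means the fiber over $(\bs\alpha,\bs 0)$ is reduced at that point and, since $\dim \Gamma_{n,d} = \dim(\cF^{d+e-1}\times\cF^{d+e-1})$, the morphism is étale at that point; in particular the set of points where $\pi$ is unramified is a nonempty open subset of $\Gamma_{n,d}$. (6) Conclude via the standard fact that a dominant morphism between irreducible varieties of the same dimension which is generically unramified induces a separable residue field extension (equivalently, the Jacobian criterion: the generic rank of $\partial \bs Q/\partial(\bs G,\bs H)$ equals $d+e-1$, so by the characterization of separability in terms of the separable degree and the trace form, or simply because étale implies separable, the extension $\F(\bs A,\bs B)\hookrightarrow\F(\Gamma_{n,d})$ is separable).

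I expect the only real subtlety to be step (6): making the passage from "unramified at one closed point" to "separable field extension" fully rigorous, especially in positive characteristic. The cleanest route is to note that $\pi$ being unramified at $(g,h,\bs\alpha,\bs 0)$ means $d_{(g,h,\bs\alpha,\bs 0)}\pi$ is injective on tangent spaces of equal dimension, hence an isomorphism, so $\pi$ is smooth (indeed étale) there; the locus of such points is open and nonempty, so $\pi$ is generically étale, and a generically étale dominant morphism of irreducible varieties has separable function-field extension by \cite[\S II.6.2]{Shafarevich94} (or the corresponding statement on separable morphisms). All the heavy lifting — the nonvanishing of $R^{(2)}$, the exact fiber structure over $(\bs\alpha,\bs 0)$, and the Jacobian computation — has already been done in Lemmas \ref{lemma:cardinality2}, \ref{lemma:unramifiedalpha0} and Corollary \ref{coro:zero_dimensional_fiber_pi}, so the remaining proof is short.
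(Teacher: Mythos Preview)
Your proposal is correct and follows essentially the same approach as the paper: use Lemma \ref{lemma:unramifiedalpha0} to exhibit a point of $\Gamma_{n,d}$ where the Jacobian $\partial\bs Q/\partial(\bs G,\bs H)$ is invertible, conclude that the unramified locus is a nonempty Zariski open subset, and invoke the criterion that a dominant morphism of irreducible varieties which is generically unramified induces a separable function-field extension. The only cosmetic difference is that the paper routes the ``nonempty open'' step through the coordinate-ring isomorphism $\overline{\Phi}\colon\F[\Gamma_{n,d}]\to\F[\bs A,\bs G,\bs H]$ of Lemma \ref{lemma:geometryGamma_n,d} (showing $\det\bs J$ is not a zero divisor in $\F[\Gamma_{n,d}]$) and cites Iversen \cite[Proposition T.8]{Iversen73} for the separability criterion, whereas you argue directly from nonvanishing at a point and cite Shafarevich; both are equivalent.
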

\begin{proof}
The Jacobian $\bs J=\partial \bs
Q/\partial(\bs G,\bs H) \in \F[\bs G,\bs H, \bs A]^{q \times q}$
is the matrix in \eqref{Jacobian dQ/dG,H}
 with $g, h, \alpha$
replaced by vectors of indeterminates $\bs G,\bs H, \bs A$.
 Lemma \ref{lemma:unramifiedalpha0} shows that $\det \bs J$ does not vanish at certain points,
so in particular $\det \bs J \neq 0$ as a polynomial.

In the proof of Lemma \ref{lemma:geometryGamma_n,d} it is shown that
the $\F$-algebra morphism $\overline{\Phi} \colon \F[\Gamma_{n,d}]\to\F[\bs
A,\bs G,\bs H]$, defined by $\overline{\Phi}(\overline{B}_j)=(G\circ
H)(A_j)$ for $1\le j\le d+e-1$, where $\overline{B}_j\in
\F[\Gamma_{n,d}]=\F[\bs A,\bs B,\bs G,\bs
H]/\mathcal{I}(\Gamma_{n,d})$ is the coordinate function of
$\Gamma_{n,d}$ defined by $B_j$, is an isomorphism. As $\det
\bs J$ is a nonzero element of $\F[\bs A,\bs G,\bs H]$,
we have $\overline{\Phi}^{-1}(\det
{\bs J})\not=0$, and the fact that $\F[\Gamma_{n,d}]$ is a
domain implies that $\overline{\Phi}^{-1}(\det {\bs J})$ is
not a zero divisor of $\F[\Gamma_{n,d}]$. As a consequence,
$\Gamma_{n,d}\cap \{\det {\bs J}\not=0\}$ is a nonempty
Zariski open subset of $\Gamma_{n,d}$. In particular, the projection
$\pi:\Gamma_{n,d}\to\cF^{d+e-1}\times\cF^{d+e-1}$ is unramified at
any $(\bs g,\bs h,\bs \alpha,\bs\beta)$ in this set.

According to Iversen \cite[Proposition T.8]{Iversen73}, the extension
$\F(\bs A,\bs B)\hookrightarrow\F(\Gamma_{n,d})$ is separable if and
only if $\pi \colon \Gamma_{n,d}\to\cF^{d+e-1}\times\cF^{d+e-1}$ is
unramified in a nonempty Zariski open subset of $\Gamma_{n,d}$,
which proves the proposition.
\end{proof}

Now \cite[Proposition 1]{Heintz83}
shows that $\deg\pi=\max\#\pi^{-1}(\bs \alpha,\bs\beta)$, where the
maximum is taken among all fibers %$\pi^{-1}(\bs \alpha,\bs\beta)$
with finite cardinality, and there exists a nonempty Zariski open
subset of $\cF^{d+e-1}\times \cF^{d+e-1}$ such that
$\#\pi^{-1}(\bs \alpha,\bs\beta)=\deg\pi$ for all its elements.
Schost \cite[Proposition 3]{Schost03} and its proof allow to describe more precisely
an open subset for the condition that
$\#\pi^{-1}(\bs \alpha,\bs\beta)=\deg\pi$.
\begin{fact}\label{coro:BoundGenericityConditionDegPiPoints}
There exists a nonzero polynomial $R^{(3)}\in\F[\bs A,\bs B]$ of total
degree at most $2\deg\pi \deg\Gamma_{n,d}$ with the following
property: for any $(\bs \alpha,\bs\beta)\in \cF^{d+e-1}\times
\cF^{d+e-1}$ with $R^{(3)}(\bs \alpha,\bs\beta)\not=0$, we have
$\#\pi^{-1}(\bs \alpha,\bs\beta)=\deg\pi$.
\end{fact}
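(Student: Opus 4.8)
The plan is to invoke the effective version of the fiber-dimension/degree estimates due to Schost, as cited, which produces exactly such a genericity polynomial for a dominant generically finite morphism. First I would recall the setup: by Lemma \ref{lemma:geometryGamma_n,d}, $\Gamma_{n,d}$ is irreducible of dimension $2(d+e-1)$ and degree at most $n^{d+e-1}$; by Corollary \ref{coro:ExistenceGenericInterpolant} the projection $\pi$ is dominant onto $\cF^{d+e-1}\times\cF^{d+e-1}$, which has the same dimension, so $\pi$ is a generically finite dominant morphism; and by Proposition \ref{prop:FieldExtensionFGammaIsSeparable} the associated field extension $\F(\bs A,\bs B)\hookrightarrow\F(\Gamma_{n,d})$ is separable. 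These three facts are precisely the hypotheses under which \cite[Proposition 3]{Schost03} applies.

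Next I would apply that proposition (and inspect its proof) to the morphism $\pi$. The statement there is that for a dominant, generically finite, separable morphism between irreducible varieties of the same dimension, there is a nonzero polynomial on the target whose nonvanishing guarantees that the fiber has the maximal cardinality $\deg\pi$; moreover the proof gives an explicit degree bound for this polynomial in terms of $\deg\pi$ and the degree of the source variety. In our situation the source is $\Gamma_{n,d}$, so the bound reads $2\deg\pi\cdot\deg\Gamma_{n,d}$, which is exactly the asserted bound. I would remark that this polynomial can be taken over $\F$ because all the data ($\Gamma_{n,d}$, the equations $Q_i$, and $\pi$) are defined over $\F$, so the construction in Schost's proof may be carried out over $\F$; alternatively one may simply replace the constructed polynomial by the product of its Galois conjugates, which at worst increases the degree but keeps it $\F$-rational, or just pass to the squarefree part.

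Finally I would note that combining this with the lower bound $\deg\pi\geq S_2(d+e-1,d)$ from Corollary \ref{coro:zero_dimensional_fiber_pi} and the upper bound $\deg\pi\leq d^{d+e-1}$ from Lemma \ref{lemma:UpperBoundDegPi} (i.e.\ \eqref{piBounds}), together with $\deg\Gamma_{n,d}\leq n^{d+e-1}$, yields a fully explicit numerical bound on $\deg R^{(3)}$ should one want it, though the stated form $2\deg\pi\deg\Gamma_{n,d}$ suffices.

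I do not anticipate a serious obstacle: the entire content is to check that the hypotheses of \cite[Proposition 3]{Schost03} are met and to read off the degree bound from its proof. The one point requiring a sentence of care is the descent of $R^{(3)}$ from $\cF$ to $\F$, handled as above; and one should make sure that Schost's ``generic finiteness plus separability'' hypothesis matches what Proposition \ref{prop:FieldExtensionFGammaIsSeparable} and Corollary \ref{coro:ExistenceGenericInterpolant} provide, which it does.
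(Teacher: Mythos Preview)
Your proposal is correct and matches the paper's approach: the paper states this as a \emph{Fact} with no proof beyond the preceding sentence ``Schost \cite[Proposition 3]{Schost03} and its proof allow to describe more precisely an open subset for the condition that $\#\pi^{-1}(\bs \alpha,\bs\beta)=\deg\pi$,'' so your plan of verifying the hypotheses (irreducibility and dimension from Lemma~\ref{lemma:geometryGamma_n,d}, dominance from Corollary~\ref{coro:ExistenceGenericInterpolant}, separability from Proposition~\ref{prop:FieldExtensionFGammaIsSeparable}) and then invoking Schost's result is exactly what the paper intends. Your remarks on descent to $\F$ and on extracting an explicit numerical bound are reasonable elaborations that the paper leaves implicit.
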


We have introduced two polynomial conditions on $\bs \alpha$: $R^{(2)}$ in \eqref{Qdef} and
$R^{(3)}$ in the previous fact.
We now combine them into a single polynomial:
\begin{equation}
\label{defR}
R = R^{(2)} R^{(3)}.
\end{equation}
Then $R \in \F[\bs A]$ is nonzero and from
\eqref{StirlingBound},
\eqref{piBounds}, and  Lemma \ref{lemma:geometryGamma_n,d}, we have
\begin{equation}
\label{degR}
\deg R <  (d+e)^{d+e-1} + 2 (d^2e)^{d+e-1} \leq 3 (d^2e)^{d+e-1}.
\end{equation}

\begin{definition}
\label{defGeneric} For $d,e \geq 2$ and a field $\F$, we call $\bs \alpha \in
\F^{d+e-1}$ \emph{generic} if $R(\bs \alpha) \neq 0$, and the
nonempty Zariski-open set $\mathcal U = \{R\neq 0\} \subset
\F^{d+e-1}$ consists of these generic $\bs \alpha$. For any $\bs
\beta \in \F^{d+e-1}$, we also call $(\bs \alpha, \bs \beta)$
\emph{generic}.
\end{definition}

We can now give a criterion under which $\pi$ is unramified.
\begin{lemma}\label{lemma:FiberUnramified}
Let $(\bs\alpha,\bs \beta)\in\cF^{d+e-1}\times \cF^{d+e-1}$ be generic.
Then $\#\pi^{-1}(\bs \alpha,\bs\beta)=\deg\pi$ and $\pi$ is
unramified at all $(g,h,\bs\alpha,\bs
\beta)\in\pi^{-1}(\bs\alpha,\bs \beta)$.
\end{lemma}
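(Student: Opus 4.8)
The plan is to derive both conclusions from the factorization $R = R^{(2)} R^{(3)}$ together with the two genericity hypotheses it encodes. Since $(\bs\alpha,\bs\beta)$ is generic, $R(\bs\alpha)\neq 0$, hence both $R^{(2)}(\bs\alpha)\neq 0$ and $R^{(3)}(\bs\alpha,\bs\beta)\neq 0$. The cardinality claim $\#\pi^{-1}(\bs\alpha,\bs\beta)=\deg\pi$ is then immediate from Fact \ref{coro:BoundGenericityConditionDegPiPoints}, which was tailored precisely to the nonvanishing of $R^{(3)}$.

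For the unramifiedness claim, first I would record that by Corollary \ref{coro:ExistenceGenericInterpolant} the hypothesis $R^{(2)}(\bs\alpha)\neq 0$ forces the fiber $\pi^{-1}(\bs\alpha,\bs\beta)$ to be zero-dimensional; combined with the cardinality equality just established, it consists of exactly $\deg\pi$ distinct points. Now the key observation is the one already used in the proof of Proposition \ref{prop:FieldExtensionFGammaIsSeparable}: $\pi$ is unramified at a point $(g,h,\bs\alpha,\bs\beta)$ of $\Gamma_{n,d}$ if and only if $\det\bs J$ does not vanish there, where $\bs J = \partial\bs Q/\partial(\bs G,\bs H)$. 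Since the field extension $\F(\bs A,\bs B)\hookrightarrow\F(\Gamma_{n,d})$ is separable (Proposition \ref{prop:FieldExtensionFGammaIsSeparable}), the trace form on $\F(\Gamma_{n,d})$ over $\F(\bs A,\bs B)$ is nondegenerate; equivalently, by \cite[Proposition 1]{Heintz83}, a fiber of cardinality exactly $\deg\pi$ must be contained in the unramified locus, since each ramified point would reduce the count below $\deg\pi$. I would therefore argue: a zero-dimensional fiber of size $\deg\pi$ is reduced, and a reduced fiber of a dominant separable morphism of irreducible varieties of equal dimension lies entirely in $\{\det\bs J\neq 0\}$, which is exactly the statement that $\pi$ is unramified at all $(g,h,\bs\alpha,\bs\beta)\in\pi^{-1}(\bs\alpha,\bs\beta)$.

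The cleanest route, and the one I would actually write, is to note that $R^{(3)}$ was constructed via Schost \cite[Proposition 3]{Schost03} precisely as the genericity condition guaranteeing both a fiber of full cardinality \emph{and} that this fiber meets the unramified locus; so one may simply invoke that the open set $\{R^{(3)}\neq 0\}$ is contained in the locus of points over which $\pi$ is unramified. Either way, the main obstacle — really the only nontrivial point — is to make rigorous the passage from ``the fiber has $\deg\pi$ points'' to ``$\pi$ is unramified at each of them.'' This rests on the separability established in Proposition \ref{prop:FieldExtensionFGammaIsSeparable}: without it, a fiber could achieve cardinality $\deg\pi$ while still being ramified (in inseparable situations the generic fiber itself is nonreduced). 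With separability in hand, the generic fiber is étale over the generic point, so its cardinality equals $\deg\pi$ exactly when every point is unramified, and this transfers to any $(\bs\alpha,\bs\beta)$ in the appropriate open set. This completes the proof.
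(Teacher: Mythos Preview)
Your overall strategy matches the paper's: get $\#\pi^{-1}(\bs\alpha,\bs\beta)=\deg\pi$ from $R^{(3)}(\bs\alpha,\bs\beta)\neq 0$ via Fact \ref{coro:BoundGenericityConditionDegPiPoints}, then deduce unramifiedness from the fact that the fiber already has the maximal number of points. The difference lies in how the second step is made precise.

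The paper argues as follows. Over the open set $\mathcal U$ the restriction of $\pi$ is a finite morphism, so the Principle of conservation of number (Danilov \cite[\S 5.7]{Danilov94}) gives
\[
\deg\pi=\sum_{(g,h,\bs\alpha,\bs\beta)\in\pi^{-1}(\bs\alpha,\bs\beta)}\deg_{(g,h,\bs\alpha,\bs\beta)}\pi,
\]
with each summand the local multiplicity. Since there are $\deg\pi$ summands, each equals $1$, so the local ring $\mathcal{O}_{\Gamma_{n,d},(g,h,\bs\alpha,\bs\beta)}/\pi^*(\mathfrak m_{(\bs\alpha,\bs\beta)})$ is $\F$; equivalently $\pi^*(\mathfrak m_{(\bs\alpha,\bs\beta)})=\mathfrak m_{(g,h,\bs\alpha,\bs\beta)}$, which is precisely the definition of unramified (Danilov \cite[\S 5.2]{Danilov94}).

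Your writeup gestures at the same conclusion but the justifications you cite are not the right ones. \cite[Proposition 1]{Heintz83} bounds set-theoretic fiber cardinality by $\deg\pi$; it says nothing about ramification, and a ramified point still contributes $1$ to the set-theoretic count. What you need is that the \emph{scheme-theoretic} length of the fiber equals $\deg\pi$, which is exactly the conservation-of-number statement and requires finiteness of $\pi$ over $\mathcal U$; you do not address this. Your ``cleanest route''---that Schost's $R^{(3)}$ already encodes unramifiedness---is not what Fact \ref{coro:BoundGenericityConditionDegPiPoints} asserts, so you cannot simply invoke it. In short, the idea is right, but to make the step ``$\deg\pi$ points $\Rightarrow$ unramified'' rigorous you should either reproduce the local-multiplicity argument the paper gives, or explicitly establish that $\pi$ is finite over $\mathcal U$ and then apply a length-counting argument.
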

\begin{proof}
From Fact \ref{coro:BoundGenericityConditionDegPiPoints},
we know that $\#\pi^{-1}(\bs\alpha,\bs\beta)=\deg\pi$.
Thus the restriction
$\pi|_{\pi^{-1}(\mathcal U)} \colon \pi^{-1}(\mathcal U)\to \mathcal U$ is
a finite morphism. Then for any $(\bs\alpha,\bs\beta)\in \mathcal U$,
the Principle of conservation
of number (see, e.g., Danilov \cite[\S 5.7]{Danilov94}) asserts that
$$\deg\pi=\sum_{(g,
h,\bs\alpha,\bs\beta)\in\pi^{-1}(\bs\alpha,\bs\beta)}\deg_{(g,
h,\bs\alpha,\bs\beta)}\pi,$$
where
$$\deg_{(g,
h,\bs\alpha,\bs\beta)}\pi=\dim_\F(\mathcal{O}_{\Gamma_{n,d},(g,
h,\bs\alpha,\bs\beta)}/\pi^*(\mathfrak{m}_{(\bs\alpha,\bs\beta)})
\mathcal{O}_{\Gamma_{n,d},(g,h,\bs\alpha,\bs\beta)}),$$
$\mathcal{O}_{\Gamma_{n,d},(g,h,\bs\alpha,\bs\beta)}$ is the local
ring of rational functions on $\Gamma_{n,d}$ defined over $\F$ which
are well-defined at $(g,h,\bs\alpha,\bs\beta)$ and
$\mathfrak{m}_{(\bs\alpha,\bs\beta)}$ is the maximal ideal of
rational functions on $\cF^{d+e-1}\times \cF^{d+e-1}$, defined over
$\F$ and vanishing at $(\bs\alpha,\bs\beta)$.

Now the fact that $\#\pi^{-1}(\bs\alpha,\bs\beta)=\deg\pi$ implies
that $\deg_{(g,h,\bs\alpha,\bs\beta)}\pi=1$ for any $(g,
h,\bs\alpha,\bs\beta)\in\pi^{-1}(\bs\alpha,\bs\beta)$. This means
that $\mathcal{O}_{\Gamma_{n,d},(g,
h,\bs\alpha,\bs\beta)}/\pi^*(\mathfrak{m}_{(\bs\alpha,\bs\beta)})
\mathcal{O}_{\Gamma_{n,d},(g, h,\bs\alpha,\bs\beta)}$ is isomorphic
to $\F$. It follows that
$$\pi^*(\mathfrak{m}_{(\bs\alpha,\bs\beta)})=
\mathfrak{m}_{(g,h,\bs\alpha,\bs\beta)}$$
for all the points $(g,
h,\bs\alpha,\bs\beta)\in\pi^{-1}(\bs\alpha,\bs\beta)$, which by,
e.g., \cite[\S 5.2]{Danilov94}, implies that $\pi$ is unramified at
all such points.
\end{proof}

%
%------------------------------------------------------------------------
%------------------------------------------------------------------------
%------------------------------------------------------------------------
%
\section{Homotopy for interpolation}
Our general task is to find composite interpolating polynomials
for given generic $(\bs\alpha,\bs\beta)\in \F^{d+e-1} \times \F^{d+e-1}$,
where $\char(\F)$ does not divide $d$.
Corollary \ref{coro:zero_dimensional_fiber_pi} provides a solution for the special value $\bs \beta = \bs 0$.
Our main tool is a homotopy connecting these two instances.

Recall that the interpolants for
$(\bs\alpha,\bs\beta)=(\alpha_1,\ldots,\alpha_{d+e-1},\beta_1,
\ldots,\beta_{d+e-1})$ are the common zeros $(g,h)$ of the polynomial system
\begin{equation}\label{eq:PolynomialSystemInterpolant}
(G\circ H)(\alpha_i)-\beta_i=0\quad\text{ for } 1\le i\le d+e-1 ,
\end{equation}
where $G=X^d+G_{d-1}X^{d-1}+\cdots+G_0$ and
$H=X^e+H_{e-1}X^{e-1}+\cdots +H_1X$ are univariate polynomials in
$X$ with $d+e-1$ indeterminate coefficients $G_{d-1},\ldots,G_0$ and
$H_{e-1},\ldots,H_1$. The deformation describing the homotopy is the
variety $V^*\subset\cF^{d+e}$, defined over $\F$ and determined by the
equations
\begin{equation}
\label{Vequations}
(G\circ H)(\alpha_i)-\beta_i S=0\quad\text{ for } 1\le i\le d+e-1 .
\end{equation}
Here, $S$ is a parameter whose values range over $\cF$. For $S=0$, we have
the special problem already solved, and $S=1$ is our general task.
More precisely, we shall be interested in a curve $V\subset V^*$
which consists of certain irreducible components over $\F$ of the
variety $V^*$.
Since each equation in \eqref{Vequations} has degree at most $d$,
the B\'ezout inequality then implies that
\begin{equation}
\label{degVupperBound}
\delta = \deg V \leq \deg V^* \leq d^{d+e-1}.
\end{equation}

The algorithm solving system \eqref{eq:PolynomialSystemInterpolant}
may be divided into three main parts. First, we  compute a bivariate
polynomial $m_i(S,T)$ defining an irreducible plane curve
birationally equivalent to an irreducible component of $V$.
Then we extend the computation of the polynomial $m_i(S,T)$ to the
computation of the birational inverse itself. Finally, %in Section
%\ref{subsection:ComputationInterpolant}
we substitute 1 for $S$ in the birational inverse computed in the
previous step and recover at least one solution to system
\eqref{eq:PolynomialSystemInterpolant}.
%
%------------------------------------------------------------------------
%------------------------------------------------------------------------
%------------------------------------------------------------------------
%------------------------------------------------------------------------
%
We use a representation of varieties called a {\em geometric
solution} or a {\em rational univariate representation} which is well suited for
algorithmic purposes (see, e.g., Rouillier \cite{Rouillier97},
Giusti et al.\ \cite{GiLeSa01}).

\subsection{Geometric solutions}\label{subsec:GeometricSolutions}
Let $X_1,\ldots,X_n$ be indeterminates over $\F$ and denote $\bs
X=(X_1,\ldots,X_n)$. We start with the definition of a geometric
solution of a zero-dimensional variety defined over $\F$. Let $V=
\{\bs P_1, \ldots,\bs P_D \}  \subset \cF^n$ be a zero-dimensional
reduced variety defined over $\F$, and $\mathcal{L}\in \F[\bs X]$ a
linear form  which separates the points of $V$, that is,
$\mathcal{L} (\bs P_i)\ne \mathcal{L} (\bs P_j)$ for $i \ne j$. A
\emph{geometric solution} of $V$ consists of
\begin{itemize}
\item $\bs \lambda \in\F^n$ so that the linear form $\mathcal{L}=\bs \lambda\cdot
\bs X=\lambda_1X_1+\cdots+\lambda_n X_n\in \F[\bs X]$
separates the points of $V$,
\item the minimal polynomial  $m_{\bs\lambda}= \prod_{1 \le i \le D}
(T - \mathcal{L}(\bs P_i))\in \F[T]$ of $\mathcal{L}$ in $V$, where
$T$ is a new variable,
\item polynomials $w_1, \dots,w_n \in \F[T]$ with $\deg w_j< D$ for
$1\le j \le n$ such that
$$ V = \{(w_1(\eta) ,\dots,w_n (\eta) ) \in
\cF^n \colon \,\eta \in \cF,\,
 m_{\bs\lambda}(\eta) = 0 \}.$$
\end{itemize}
Thus $\bs P_i=\big(w_1(\mathcal L(\bs P_i)),\ldots, w_n(\mathcal L(\bs
P_i)\big)$ for $1\le i \leq D$, and each $w_j$ interpolates
on $D$ pairwise distinct points $\mathcal L(\bs P_1),\ldots, \mathcal L(\bs P_D)$.

Next, we consider this notion for equidimensional varieties of
dimension one, that is, for curves, defined over $\F$. Let $n\geq
2$, $V\subset \cF^n$ be a reduced variety defined over $\F$ of pure
dimension $1$ and degree $\delta$, and let $V=V_1\cup\cdots\cup V_t$
be the decomposition over $\F$ of $V$ into irreducible components.
Suppose that the indeterminate $X_1$ forms a separable transcendence
basis of all field extensions $\F \hookrightarrow \F(V_i)$ for $1\le
i\le t$, that is, each $\F(X_1) \hookrightarrow \F(V_i)$ is a finite
separable field extension. Denote by $D_i$ its degree and let
$D=\sum_{1\leq i \leq t} D_i$. In particular, the linear projection
$\pi:V_i\to \cF^1$ defined by $\pi(\bs x)=x_1$ is a dominant
morphism of degree $D_i$ for $1\le i\le t$. The behavior of the
degree of a variety under linear maps (see, e.g., \cite[Lemma
2]{Heintz83}) implies that
\begin{equation}
\label{Dbound}
D=\sum_{1\leq i \leq t}D_i\le\sum_{1 \leq i \leq t} \deg V_i= \delta.
\end{equation}

Let  $\bs\Lambda=(\Lambda_2,\ldots, \Lambda_{n})$ be  a vector of
new indeterminates. The extension $V_{\F(\bs\Lambda)}$ of $V$ to
$\overline{\F(\bs\Lambda)}{}^n$ is a variety of pure dimension $1$
and its coordinate ring is isomorphic to
$\F(\bs\Lambda)\otimes_{\F}\F[V]$. Consider the generic linear form
\begin{equation}\label{eq: forma lineal generica}
\mathcal{L}_{\bs\Lambda}=\bs\Lambda\cdot\bs X^*=\Lambda_2X_2+\cdots+
\Lambda_nX_n.\end{equation}
As the mapping $\F[V]\to\F[V_1]\times\cdots\times\F[V_t]$ defined by
$\xi\mapsto (\xi|_{V_1},\ldots,\xi|_{V_t})$ is an isomorphism of
$\F$-algebras, we identify $\F[V]$ with
$\F[V_1]\times\cdots\times\F[V_t]$. For $\xi\in\F[V]$, we denote its
restriction to $V_i$ by $\xi^{(i)}$ for $1\le i\le t$. Let
$\xi_1,\ldots, \xi_n\in \F[V]$ be the coordinate functions of $V$
induced by $X_1,\ldots,X_n$, and set $\widehat{\mathcal{L}}=
\mathcal{L}_{\bs\Lambda}(\xi_2,\ldots,\xi_n)\in
\F(\bs\Lambda)\otimes_{\F}\F[V]$. Since $\xi_1^{(i)}$ and
$\widehat{\mathcal{L}}^{(i)}$ are algebraically dependent over
$\F(\bs\Lambda)$, there exists an irreducible polynomial
$m_{\bs\Lambda}^{(i)}\in \F[\bs\Lambda,X_1,T]$, separable with
respect to $T$, such that
$m_{\bs\Lambda}^{(i)}(\bs\Lambda,\xi_1^{(i)},\widehat{\mathcal{L}}^{(i)})=0$
holds in $\F(\bs\Lambda)\otimes_{\F}\F[V_i]$ for $1\le i\le t$.
These polynomials are pairwise relatively prime, and defining
$$m_{\bs\Lambda}=\prod_{1\leq i \leq t} m_{\bs\Lambda}^{(i)},$$
we find the following identity in
$\F(\bs\Lambda)\otimes_{\F}\F[V]$:
\begin{equation} \label{eq: polinomio de Chow}
    m_{\bs\Lambda}(\bs\Lambda,\xi_1,\widehat{\mathcal{L}})=0.
\end{equation}
From, e.g., \cite[Proposition 1]{Schost03}, we deduce the following
bounds:
\begin{itemize}
    \item $\deg_T m_{\bs\Lambda}= D$,
    \item $\deg_{X_1} m_{\bs\Lambda} \leq \delta$,
    \item $\deg_{\bs\Lambda} m_{\bs\Lambda} \leq \delta$.
\end{itemize}

Taking partial derivatives with respect to $\Lambda_j$ in \eqref{eq:
polinomio de Chow}, we deduce the identity
\begin{equation}\label{eq: primera derivada de Chow}
    \displaystyle \frac{\partial m_{\bs\Lambda}}{\partial \Lambda_{j}}
    (\bs\Lambda,\xi_1, \widehat{\mathcal{L}}) + \xi_j
    \displaystyle\frac{\partial m_{\bs\Lambda}}{\partial T}
    (\bs\Lambda,\xi_1, \widehat{\mathcal{L}})=0
\end{equation}
in  $\F(\bs\Lambda)\otimes_{\F}\F[V]$ for $2\le j\le n$. The
polynomial $\partial m_{\bs\Lambda}/\partial T$ is nonzero, and the
resultant $\mathrm{Res}_T(m_{\bs\Lambda},\partial
m_{\bs\Lambda}/\partial T)$ is also nonzero (see, e.g., G\'\i menez \&\ Matera \cite[Lemma
3.2]{GiMa19}).

We take some $\bs\lambda\in \F^{n-1}$ such that $\deg_T
m_{\bs\Lambda}(\bs\lambda,X_1,T)=D$ and \linebreak
$\mathrm{Res}_T(m_{\bs\Lambda},\partial m_{\bs\Lambda}/\partial
T)(\bs\lambda,X_1)\not=0$. Then the linear form
$\mathcal{L}=\bs\lambda\cdot \bs X^*$ with $\bs X^* = (X_2,\ldots,
X_n)$ induces a primitive element of the separable field extension
$\F(X_1)\hookrightarrow \F(V_i)$ for $1\le i\le t$, and the minimal
polynomials of $\mathcal{L}$ in the field extensions
$\F(X_1)\hookrightarrow \F(V_i)$ are pairwise relatively prime (see,
e.g., \cite[Proposition 3.4]{GiMa19}). If $\ell$ is the coordinate
function of $\F[V]$ defined by $\mathcal{L}$, we then say that
$\ell$ is a {\em primitive element} of the extension
$\F(X_1)\hookrightarrow \F(V)$, where $\F(V)$ is total ring of
fractions of $\F[V]$. Since $\deg_T m_{\bs\Lambda} = D$, it follows
that $m_{\bs\Lambda}(\bs\lambda,X_1,T)$ is the minimal polynomial of
$\ell$ in the extension $\F(X_1)\hookrightarrow \F(V)$.
Setting
\begin{equation}\label{eq:definitionParametrizChow}
m_{\bs\lambda}(X_1,T) = m_{\bs\Lambda}(\bs\lambda,X_1,T), \quad v_j
(X_1,T) = -\frac{\partial m_{\bs\Lambda}}{\partial \Lambda_{j}}
(\bs\lambda,X_1,T) \text{ in } \F[X_1,T]
\end{equation}
and substituting $\bs\lambda$ for $\bs\Lambda$ in (\ref{eq:
polinomio de Chow})--(\ref{eq: primera derivada de Chow}), we obtain
the following identities of $\F[V]$:
\begin{equation}\label{eq:parametrizacionesChow}
m_{\bs\lambda}(\xi_1, \ell) = 0, \quad \frac{\partial
m_{\bs\lambda}}{\partial T} (\xi_1, \ell)\xi_j- v_j (\xi_1,\ell)=0\
\ (2\le j\le n).
\end{equation}
Thus the polynomials
$$m_{\bs\lambda}(X_1,\mathcal{L}) \text{ and }
\frac{\partial m_{\bs\lambda}}{\partial T}(X_1,\mathcal{L})X_j-
v_j(X_1,\mathcal{L})\ \ (2\le j\le n)$$
belong to $\mathcal{I}(V) \subset \F[X_1, \ldots, X_n]$.
They satisfy the bounds $\deg_{X_1} v_j \leq \delta$
and $\deg_T v_j \leq D$ for $2\le j\le n$.
Finally, the equations
$$m_{\bs\lambda}(X_1,\mathcal{L})=0,\quad \frac {\partial
m_{\bs\lambda}} {\partial T}(X_1,\mathcal{L})X_j-
v_j(X_1,\mathcal{L})=0\quad (2\le j\le n),$$
constitute a system of equations for the variety $V$ in the Zariski
dense open subset $V\cap \{(\partial m_{\bs\lambda}/\partial
T)(X_1,\mathcal{L})\not=0\}$ of $V$. This motivates the following
definition.
\begin{definition}\label{def:GeometricSolution}
With assumptions as above, a \emph{geometric solution} of $V$
consists of the following items:
    \begin{itemize}
    \item a linear form $\mathcal{L}=\bs\lambda\cdot \bs X^*\in \F[\bs X^*]$
    which induces a primitive element $\ell$ of the extension
    $\F(X_1)\hookrightarrow \F(V)$,
    \item the minimal polynomial $m_{\bs\lambda}\in \F[X_1][T]$ of $\ell$,
    \item a generic \emph{parametrization} $v_2, \ldots, v_n$ of $V$ by the
    zeros of $m_{\bs\lambda}$, of the form
    $$\frac {\partial m_{\bs\lambda}}
   { \partial T}(X_1,T)\, X_j-v_j(X_1,T)\quad
    (2\le j\le n),$$ with $v_j\in \F[X_1][T]$,
    $\deg_Tv_j<D$, $\deg_{X_1}v_j\le\delta$ and $(\partial m_{\bs\lambda}
    /\partial T)(X_1,\mathcal{L})\,X_j-v_j(X_1,\mathcal{L})\in \mathcal{I}(V)$ for $j \leq n$.
    \end{itemize}
\end{definition}
We observe that the polynomial $m_{\bs\lambda}\in \F[X_1,T]$ can be
also defined as follows: consider the linear map
$$\pi_{\bs\lambda} \colon V\to \cF^2,\quad\pi_{\bs\lambda}(\bs
x)=(x_1,\bs\lambda\cdot \bs x^*).$$
The Zariski closure of $\pi_{\bs\lambda}(V)$ is a plane curve of
degree at most $\delta$, which is indeed defined by
$m_{\bs\lambda}(X_1,T)=0$. Further, the projection
$\pi_{\bs\lambda}$ constitutes a birational equivalence between $V$
and the plane curve $\mathcal{C}=\{m_{\bs\lambda}(X_1,T)=0\}$,
namely there is a rational inverse of $\pi_\lambda$. More precisely,
let
\begin{align*}
V_{\mathrm{deg}}&=\{\bs x\in\cF^n:(\partial m_{\bs\lambda}/\partial
T)(x_1,
\mathcal{L}(\bs x))=0\},\\
\mathcal{C}_{\mathrm{deg}}&=\{\bs (x_1,t)\in\cF^2:(\partial
m_{\bs\lambda}/\partial T)(x_1,t)=0\}.
\end{align*}
The fact that $\mathrm{Res}_T(m_{\bs\lambda},\partial
m_{\bs\lambda}/\partial T)\not=0$ implies that $V\setminus
V_{\mathrm{deg}}$ and
$\mathcal{C}\setminus\mathcal{C}_{\mathrm{deg}}$ are Zariski dense
open subsets of $V$ and $\mathcal{C}$ respectively. We have the
following result.
\begin{fact}[{Cafure \&\ Matera \cite[Proposition
6.3]{CaMa06}}]\label{fact:PlaneCurveBirrational}
$\pi_{\bs\lambda}|_{V\setminus V_{\mathrm{deg}}}:V\setminus
V_{\mathrm{deg}}\to\mathcal{C}\setminus\mathcal{C}_{\mathrm{deg}}$
is an isomorphism of open sets defined over $\F$.
\end{fact}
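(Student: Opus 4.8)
The plan is to produce the rational inverse of $\pi_{\bs\lambda}$ by hand, reading it off from the parametrization of Definition~\ref{def:GeometricSolution}, and to check that it is a morphism exactly on $\mathcal{C}\setminus\mathcal{C}_{\mathrm{deg}}$. Writing $\partial_T m_{\bs\lambda}$ for $\partial m_{\bs\lambda}/\partial T$, set
$$\psi\colon\mathcal{C}\setminus\mathcal{C}_{\mathrm{deg}}\longrightarrow\cF^n,\qquad \psi(x_1,t)=\Big(x_1,\ \frac{v_2(x_1,t)}{\partial_T m_{\bs\lambda}(x_1,t)},\ \ldots,\ \frac{v_n(x_1,t)}{\partial_T m_{\bs\lambda}(x_1,t)}\Big).$$
Since $\partial_T m_{\bs\lambda}$ has no zero on $\mathcal{C}\setminus\mathcal{C}_{\mathrm{deg}}$, this is a morphism, defined over $\F$ because $v_2,\ldots,v_n,\partial_T m_{\bs\lambda}\in\F[X_1,T]$. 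Everything then reduces to three points: (i) $\pi_{\bs\lambda}$ maps $V\setminus V_{\mathrm{deg}}$ into $\mathcal{C}\setminus\mathcal{C}_{\mathrm{deg}}$; (ii) $\psi$ maps $\mathcal{C}\setminus\mathcal{C}_{\mathrm{deg}}$ into $V\setminus V_{\mathrm{deg}}$; (iii) $\psi\circ\pi_{\bs\lambda}$ and $\pi_{\bs\lambda}\circ\psi$ are the identities of the respective sets.

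Point (i) and one half of (iii) are immediate from \eqref{eq:parametrizacionesChow}: the polynomials $m_{\bs\lambda}(X_1,\mathcal{L})$ and $\partial_T m_{\bs\lambda}(X_1,\mathcal{L})X_j-v_j(X_1,\mathcal{L})$ lie in $\mathcal{I}(V)$, so for $\bs x\in V$ we have $m_{\bs\lambda}(x_1,\mathcal{L}(\bs x))=0$, hence $\pi_{\bs\lambda}(\bs x)\in\mathcal{C}$, and if in addition $\bs x\notin V_{\mathrm{deg}}$ then $\partial_T m_{\bs\lambda}(x_1,\mathcal{L}(\bs x))\neq 0$, so $\pi_{\bs\lambda}(\bs x)\notin\mathcal{C}_{\mathrm{deg}}$ and moreover $x_j=v_j(x_1,\mathcal{L}(\bs x))/\partial_T m_{\bs\lambda}(x_1,\mathcal{L}(\bs x))$, i.e.\ $\psi(\pi_{\bs\lambda}(\bs x))=\bs x$.

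The remaining and central point is (ii) together with $\pi_{\bs\lambda}\circ\psi=\mathrm{id}$, and both follow once I establish the polynomial congruence
$$\sum_{2\le j\le n}\lambda_j\,v_j(X_1,T)\ \equiv\ T\,\partial_T m_{\bs\lambda}(X_1,T)\pmod{m_{\bs\lambda}(X_1,T)}.$$
I would derive it by multiplying \eqref{eq: primera derivada de Chow} by $\Lambda_j$, summing over $j$, using $\widehat{\mathcal L}=\sum_j\Lambda_j\xi_j$ and the definition $v_j=-\partial m_{\bs\Lambda}/\partial\Lambda_j$ to get $\sum_j\Lambda_j v_j(\bs\Lambda,\xi_1,\widehat{\mathcal L})=\widehat{\mathcal L}\,\partial_T m_{\bs\Lambda}(\bs\Lambda,\xi_1,\widehat{\mathcal L})$ in $\F(\bs\Lambda)\otimes_{\F}\F[V]$; since on the $i$th component $\xi_1^{(i)}$ and $\widehat{\mathcal L}^{(i)}$ satisfy only the relation $m_{\bs\Lambda}^{(i)}$ — the minimal polynomial of $\widehat{\mathcal L}^{(i)}$ over $\F(\bs\Lambda)(X_1)$ — and the $m_{\bs\Lambda}^{(i)}$ are pairwise coprime, the difference $\sum_j\Lambda_j v_j-T\,\partial_T m_{\bs\Lambda}$ is divisible by $m_{\bs\Lambda}$ in $\F(\bs\Lambda)(X_1)[T]$, and specializing $\bs\Lambda\mapsto\bs\lambda$ (harmless by the choice of $\bs\lambda$) yields the congruence. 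Granting it, for $(x_1,t)\in\mathcal{C}\setminus\mathcal{C}_{\mathrm{deg}}$ one gets $\mathcal{L}(\psi(x_1,t))=\big(\sum_j\lambda_j v_j(x_1,t)\big)/\partial_T m_{\bs\lambda}(x_1,t)=t$, so $\pi_{\bs\lambda}(\psi(x_1,t))=(x_1,t)$; and writing $\bs y=\psi(x_1,t)$, the point $\bs y$ satisfies $m_{\bs\lambda}(y_1,\mathcal{L}(\bs y))=m_{\bs\lambda}(x_1,t)=0$ and $\partial_T m_{\bs\lambda}(y_1,\mathcal{L}(\bs y))\,y_j=\partial_T m_{\bs\lambda}(x_1,t)\,y_j=v_j(x_1,t)=v_j(y_1,\mathcal{L}(\bs y))$, hence $\bs y$ belongs to the locus $V\setminus V_{\mathrm{deg}}=V\cap\{\partial_T m_{\bs\lambda}(X_1,\mathcal{L})\neq 0\}$, which is cut out there by exactly these equations, as noted after \eqref{eq:parametrizacionesChow}. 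This gives (ii) and $\pi_{\bs\lambda}\circ\psi=\mathrm{id}$, so $\pi_{\bs\lambda}$ and $\psi$ are mutually inverse morphisms defined over $\F$, which is the assertion.

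I expect the one genuine subtlety to be exactly the claim ``$\psi$ lands in $V$'': a priori $\psi$ merely parametrizes some curve in $\cF^n$ birational to $\mathcal{C}$, and one must know this curve is $V$. The congruence above settles it provided one uses the observation that the displayed equations define $V$ on $\{\partial_T m_{\bs\lambda}(X_1,\mathcal L)\neq 0\}$; if one wishes to avoid that, the alternative is to work component by component — the separability (primitive element) hypothesis makes each $\pi_{\bs\lambda}|_{V_i}\colon V_i\dashrightarrow\mathcal{C}_i=\{m_{\bs\lambda}^{(i)}=0\}$ birational, with comorphism $X_1\mapsto\xi_1^{(i)},\ T\mapsto\ell^{(i)}$ an isomorphism of function fields, so $\psi$ coincides on a dense open of the irreducible set $\mathcal{C}_i\setminus\mathcal{C}_{\mathrm{deg}}$ with the rational inverse of $\pi_{\bs\lambda}|_{V_i}$ and, by continuity and closedness of $V_i$, has image contained in $V_i$; gluing over $i$ and using that the parametrization equations already force $\pi_{\bs\lambda}$ to be injective on $V\setminus V_{\mathrm{deg}}$ then finishes the proof.
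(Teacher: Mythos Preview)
The paper does not give its own proof of this statement: it is recorded as a \emph{Fact} with a citation to \cite[Proposition~6.3]{CaMa06}, and the paper only adds, immediately afterwards, the remark that the inverse of $\pi_{\bs\lambda}|_{V\setminus V_{\mathrm{deg}}}$ is exactly the map $\psi$ you wrote down. So there is nothing to compare your argument against in this paper; what you have supplied is a proof of the cited result.

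Your argument is correct, with one caveat you already flagged. Points (i) and $\psi\circ\pi_{\bs\lambda}=\mathrm{id}$ follow directly from \eqref{eq:parametrizacionesChow}, and the congruence $\sum_j\lambda_j v_j\equiv T\,\partial_T m_{\bs\lambda}\pmod{m_{\bs\lambda}}$ is derived cleanly and gives $\pi_{\bs\lambda}\circ\psi=\mathrm{id}$. The only delicate step is showing $\psi$ lands in $V$. Your first justification --- that the displayed equations after \eqref{eq:parametrizacionesChow} ``cut out'' $V$ on $\{\partial_T m_{\bs\lambda}(X_1,\mathcal L)\neq 0\}$ --- is not something the paper actually asserts (it only says these are equations \emph{satisfied on} $V$ there), and taking it for granted is essentially assuming what you want to prove. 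Your second justification, working componentwise via the isomorphism of function fields $\F(\mathcal C_i)\cong\F(V_i)$ and then using continuity of $\psi$ together with closedness of $V_i$, is the sound route and completes the proof.
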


We remark that the inverse of $\pi_{\bs\lambda}|_{V\setminus
V_{\mathrm{deg}}}$ is defined in the following way:
\begin{align*}
\mathcal{C}\setminus\mathcal{C}_{\mathrm{deg}}&\to V\setminus V_{\mathrm{deg}},\\
(\bs x,t) & \mapsto \left(x_1,\frac{v_2(x_1,t)}{(\partial
m_{\bs\lambda}/\partial
T)(x_1,t)},\ldots,\frac{v_n(x_1,t)}{(\partial
m_{\bs\lambda}/\partial T)(x_1,t)}\right).
\end{align*}
%
%------------------------------------------------------------------------
%------------------------------------------------------------------------
%------------------------------------------------------------------------
%------------------------------------------------------------------------
%
\subsection{Power series
expansions}\label{subsec:PowerSeriesExpansions}
Let $n\geq 2$, $V\subset \cF^n$ be a reduced variety defined over
$\F$ of pure dimension $1$ and degree $\delta$, and let
$V=V_1\cup\cdots\cup V_t$ be the decomposition over $\F$ of $V$ into
irreducible components. Let $\mathcal{L}=\bs\lambda\cdot \bs X^*\in
\F[\bs X^*]$ be a linear form and $m_{\bs\lambda},v_2, \ldots,
v_n\in \F[X_1][T]$ which form a geometric solution of $V$ as in the
previous subsection.

$V$ is birationally equivalent to the plane curve
$\mathcal{C}\subset \cF^2$ defined by $m_{\bs\lambda}$. The fact
that $V=V_1\cup\cdots\cup V_t$, where each $V_i$ is irreducible of
dimension 1, implies $\mathcal{C}$ has a decomposition
$\mathcal{C}=\mathcal{C}_1\cup\cdots\cup \mathcal{C}_t$, where each
$\mathcal{C}_i$ is defined by an irreducible factor $m_i$ of
$m_{\bs\lambda}$, so that the irreducible factorization of
$m_{\bs\lambda}$ in $\F[X_1,T]$ is of the form
$$m_{\bs\lambda}(X_1,T)=\prod_{i=1}^tm_i(X_1,T).$$
As the linear form $\mathcal{L}$ is a primitive element of the
extension $\F(X_1)\hookrightarrow\F(V)$, the factors $m_i$ are
pairwise coprime with $\sum_{i=1}^t\deg_Tm_i=\sum_{i=1}^tD_i=D$.

Consider a single factor, say $m_1$, of $m_{\bs\lambda}$. A
representation of the zeros of $m_1$ is provided by the well-known
Hensel lemma (see, e.g., Abhyankar \cite[Lecture 12]{Abhyankar90}). More
precisely, assume that
\begin{equation}\label{eq:ConditionDiscriminant}
\mathrm{Res}_T \left(m_1(0,T),\frac{\partial m_1}{\partial
T}(0,T)\right)\not=0.
\end{equation}
Then there is a factorization
$m_1=\prod_{i=1}^{D_1}(T-\sigma_i)$ in $\cF[\![X_1]\!][T],$
where $\cF[\![X_1]\!]$ denotes the ring of formal power series in
$X_1$ with coefficients in $\cF$, and $\sigma_i\not=\sigma_j$ for
$i\not=j$. We shall see that an approximation of order $2D\delta$ of
a single power series, say $\sigma=\sigma_1$, suffices to compute
$m_1$.

For this purpose, we observe that $m_1\in\F[X_1,T]$ can also be
characterized as the primitive polynomial $p\in\F[X_1][T]$ of
minimal degree (up to nonzero multiples in $\F$) for which
$p(X_1,\sigma)=0$ holds. Let $N=2D\delta$ and let $\sigma_N\in
\F[X_1]$ be the power series $\sigma$ truncated up to order $N+1$,
that is, $\sigma_N$ is the polynomial of degree at most $N$
congruent to $\sigma$ modulo $X_1^{N+1}$. Our next result shows that
$m_1$ can be obtained as the solution of a suitable congruence
equation involving $\sigma_N$.
\begin{lemma}\label{lemma:ComputationMinimal} Let $p\in \F[X_1,T]$ be
a polynomial with $\deg_{X_1}p\le \delta$ and $\deg_Tp\le D$
satisfying the condition
\begin{equation}\label{eq:EquivRelationMinimal} p(X_1,\sigma_N)
\equiv 0\mod X_1^{N+1}.\end{equation}
Then $m_1$ divides  $p$ in $\F[X_1,T]$.
\end{lemma}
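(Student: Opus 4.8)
The plan is to show that $p$ is divisible by $m_1$ by comparing $p$ against the minimal polynomial characterization of $m_1$ recalled just before the lemma: $m_1$ is, up to a scalar, the primitive polynomial in $\F[X_1][T]$ of least $T$-degree vanishing at $T = \sigma$. So the natural route is to prove that $p(X_1,\sigma) = 0$ exactly (not merely modulo $X_1^{N+1}$), because once that is known, $m_1 \mid p$ follows from minimality: the gcd of $m_1$ and $p$ in $\F(X_1)[T]$ also vanishes at $\sigma$, hence has $T$-degree at least $\deg_T m_1$, forcing $m_1 \mid p$ after clearing denominators and using primitivity of $m_1$. Thus the whole lemma reduces to the implication
$$
p(X_1,\sigma_N) \equiv 0 \bmod X_1^{N+1} \ \Longrightarrow\ p(X_1,\sigma) = 0.
$$

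To prove that implication I would first replace $\sigma_N$ by $\sigma$: since $\sigma \equiv \sigma_N \bmod X_1^{N+1}$ and $p$ has coefficients in $\F[X_1]$, we get $p(X_1,\sigma) \equiv p(X_1,\sigma_N) \equiv 0 \bmod X_1^{N+1}$ in $\cF[\![X_1]\!]$. Now the key point is a degree/valuation argument: the power series $p(X_1,\sigma) \in \cF[\![X_1]\!]$ is ``small'' in the $X_1$-adic sense (divisible by $X_1^{N+1}$ with $N = 2D\delta$), yet if it were nonzero it would have to be ``large''. To make the second half precise I would use that $\sigma$ is a root of $m_1$, which is integral over $\F[X_1]$ of $T$-degree $D_1 \le D$; write $r(X_1,T)$ for the remainder of $p$ upon division by $m_1$ in $\F(X_1)[T]$, clear denominators to get $c(X_1)\,p = q\,m_1 + \tilde r$ with $c \in \F[X_1]$ of controlled degree, $\deg_T \tilde r < D_1$, and note $c(X_1)\,p(X_1,\sigma) = \tilde r(X_1,\sigma)$. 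So it suffices to see $\tilde r(X_1,\sigma) = 0$, and then to argue $\tilde r = 0$, hence $m_1 \mid c\cdot p$ in $\F[X_1][T]$, and finally $m_1 \mid p$ because $m_1$ is primitive (so it divides $c\cdot p$ iff it divides $p$ once $\gcd(m_1,c)$ issues are handled — but $m_1$ irreducible of positive $T$-degree cannot divide $c(X_1)$).

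The heart of the matter — and the step I expect to be the main obstacle — is bounding the $X_1$-adic valuation of a nonzero $\tilde r(X_1,\sigma)$ from above by something $\le 2D\delta$, so that divisibility by $X_1^{N+1}$ forces $\tilde r(X_1,\sigma) = 0$. Here I would run through the other roots $\sigma = \sigma_1, \sigma_2, \ldots, \sigma_{D_1}$ of $m_1$ in $\cF[\![X_1]\!]$ (available by \eqref{eq:ConditionDiscriminant} and Hensel) and form the norm $\prod_{i} \tilde r(X_1,\sigma_i)$, which lies in $\F[X_1]$; its $X_1$-degree is bounded by $D_1 \cdot \deg_{X_1}\tilde r$ together with the contributions from $\deg_{X_1}\sigma_i$, and using $\deg_{X_1} p \le \delta$, $\deg_T p \le D$, and the bound $\deg_{X_1} m_1 \le \delta$ one checks this total is at most $2D\delta = N$. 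If $\tilde r(X_1,\sigma) \ne 0$ then $\tilde r(X_1,\sigma_1)$ is a nonzero power series divisible by $X_1^{N+1}$ (using $c(X_1)p(X_1,\sigma) \equiv 0$), so the norm is a nonzero polynomial of $X_1$-degree $> N$, a contradiction. Hence $\tilde r(X_1,\sigma) = 0$, and by the minimal-degree property of $m_1$ the remainder $\tilde r$ itself vanishes, giving $m_1 \mid p$. The only delicate bookkeeping is making the valuation count come out below $N$ with the stated $\delta$ and $D$; I would keep the constants loose (the factor $2$ in $N = 2D\delta$ is exactly the slack needed to absorb both the degree of $\tilde r$ in $X_1$ and the degrees of the power-series roots $\sigma_i$ truncated appropriately).
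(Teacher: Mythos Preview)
Your strategy---produce a univariate polynomial in $\F[X_1]$ that vanishes to order $>N$ yet has degree $\le N$, hence is zero---is exactly the paper's. The paper, however, executes it in one stroke by taking the resultant $\mathrm{Res}_T(m_1, p) \in \F[X_1]$: it lies in the ideal generated by $m_1$ and $p$ in $\F[X_1,T]$, and since both $m_1(X_1,\sigma_N)$ and $p(X_1,\sigma_N)$ are $\equiv 0 \bmod X_1^{N+1}$, so is the resultant; the standard bound $\deg_{X_1} \mathrm{Res}_T(m_1,p) \le (\deg_T p)(\deg_{X_1} m_1) + (\deg_T m_1)(\deg_{X_1} p) \le D\delta + D_1\delta \le N$ then forces it to vanish, whence $m_1$ and $p$ share a nontrivial factor in $\F(X_1)[T]$, and irreducibility of $m_1$ together with Gauss's lemma gives $m_1 \mid p$ in $\F[X_1,T]$.

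Your detour through Euclidean division $c\,p = q\,m_1 + \tilde r$ and the norm $\prod_i \tilde r(X_1,\sigma_i)$ introduces a genuine gap in the degree count. Division by (monic) $m_1$ only yields $\deg_{X_1}\tilde r \le \deg_{X_1} p + (\deg_T p - D_1 + 1)\deg_{X_1} m_1$, which can be of order $D\delta$; thus a bound of the shape $D_1 \cdot \deg_{X_1}\tilde r$ for the norm is of order $D^2\delta$, not $2D\delta$, and the ``factor $2$'' slack you allude to is not enough. (Also, ``$\deg_{X_1}\sigma_i$'' has no meaning---the $\sigma_i$ are formal power series, not polynomials.) The repair is immediate once you observe $\tilde r(X_1,\sigma_i) = p(X_1,\sigma_i)$ for every $i$ (because $m_1(\sigma_i)=0$), so your norm equals $\prod_i p(X_1,\sigma_i) = \pm\mathrm{Res}_T(m_1,p)$, and then the paper's degree bound applies directly. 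In other words: drop the division and take the resultant of $m_1$ and $p$ from the start.
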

\begin{proof} Let $p\in \F[X_1,T]$ be a solution of
\eqref{eq:EquivRelationMinimal} satisfying the conditions of the
lemma. The resultant $q\in \F[X_1]$ of $p$ and $m_1$ with respect to
$T$ has degree at most $N$ and belongs to the ideal generated  by
$m_1$ and $p$. Since $m_1(X_1,\sigma_N)\equiv 0\mod X_1^{N+1}$ and
$p(X_1,\sigma_N)\equiv 0\mod X_1^{N+1}$ by hypothesis, we deduce
that $q(X_1)\equiv 0\mod X_1^{N+1}$. Therefore, the facts that $\deg
q\le N$ and $q(X_1)\equiv 0\mod X_1^{N+1}$ imply $q=0$. In
particular, $m_1$ and $q$ have a nonconstant common factor in
$\F(X_1)[T]$. Taking into account the irreducibility of $m_1$ in
$\F(X_1)[T]$ and the Gauss lemma, we easily deduce the statement of
the lemma.
\end{proof}

Lemma \ref{lemma:ComputationMinimal} characterizes $m_1$ as the
nonzero solution of \eqref{eq:EquivRelationMinimal} of minimal
degree in $T$, up to nonzero multiples in $\F$. We remark that the
argument of Lemma \ref{lemma:ComputationMinimal} holds {\em mutatis
mutandis} with any $M=2D'\delta'$ instead of $N$, where $D'$ and
$\delta'$ are upper bounds of $D$ and $\delta$ respectively.
%
%------------------------------------------------------------------------
%------------------------------------------------------------------------
%------------------------------------------------------------------------
%
\section{The homotopy curve}
This section studies the set $V^*\subset\cF^{d+e}$ of common
solutions of the equations \eqref{Vequations}
 and provides a homotopy between the solutions for $(\bs \alpha,\bs\beta)$
 and for $(\bs \alpha,\bs 0)$.
 We use throughout the notation given there and assume that $\bs \alpha$ is generic.

 We consider the projection $\pi_S \colon V^*\to\cF^1$ with $\pi_S(\bs g,\bs h, s)=s$.
\begin{lemma}\label{Lemma:Pi_S_Dominant}
$\pi_S$ is dominant.
\end{lemma}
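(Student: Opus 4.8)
The plan is to show that the image $\pi_S(V^*) \subset \cF^1$ is infinite, which for a morphism from an algebraic set to the affine line is equivalent to being dominant. The natural strategy is to exhibit, for infinitely many values $s \in \cF$, a point of $V^*$ lying over $s$; equivalently, to show that for all but finitely many $s$ the system $(G\circ H)(\alpha_i) - \beta_i s = 0$ ($1 \le i \le d+e-1$) has a solution $(\bs g, \bs h) \in \cF^d \times \cF^e$. First I would observe that for a fixed $s \in \cF$, this system is precisely the interpolation system \eqref{eq:PolynomialSystemInterpolant} for the data $(\bs\alpha, s\bs\beta)$, whose solutions are the fiber $\pi^{-1}(\bs\alpha, s\bs\beta)$ of the projection $\pi \colon \Gamma_{n,d} \to \cF^{d+e-1} \times \cF^{d+e-1}$ from \eqref{def:projection}. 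By Corollary \ref{coro:ExistenceGenericInterpolant}, $\pi$ is dominant, and since $\bs\alpha$ is generic (so in particular $R^{(2)}(\bs\alpha) \neq 0$), the same corollary gives that $\pi^{-1}(\bs\alpha, \bs\gamma)$ is a nonempty finite set for \emph{every} $\bs\gamma \in \cF^{d+e-1}$; in particular it is nonempty for $\bs\gamma = s\bs\beta$ for every $s$.

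This already does most of the work: for each $s \in \cF$ choose $(\bs g_s, \bs h_s) \in \pi^{-1}(\bs\alpha, s\bs\beta)$, and then $(\bs g_s, \bs h_s, s) \in V^*$ by \eqref{Vequations}, so $s \in \pi_S(V^*)$. Hence $\pi_S(V^*) = \cF^1$ as a set, and in particular $\pi_S$ is dominant. (If $\cF$ is finite in some reading, one instead argues over $\cF$ as an infinite algebraically closed field, where this set-theoretic surjectivity statement is exactly dominance onto the irreducible curve $\cF^1$.) I would present this as: the restriction of $\pi$ along the line $\{(\bs\alpha, s\bs\beta) : s \in \cF\} \subset \cF^{d+e-1}\times\cF^{d+e-1}$ has all fibers nonempty, and $V^*$ contains the preimage of that line under the obvious identification, so $\pi_S$ is onto $\cF^1$.

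The one genuine subtlety — and the step I expect to need the most care — is the passage between ``nonempty fiber over a single point'' and ``dominant.'' A priori the points $(\bs g_s, \bs h_s, s)$ could all lie on lower-dimensional components of $V^*$ in a way that makes $\pi_S$ restricted to any single irreducible component non-dominant; but dominance of $\pi_S$ as a map out of all of $V^*$ only requires the image to be Zariski-dense in $\cF^1$, and an infinite subset of $\cF^1$ (the whole set $\cF$) is already dense since $\cF^1$ is an irreducible curve. So no decomposition analysis is needed here, only the elementary fact that a proper closed subset of $\cF^1$ is finite. The only thing to double-check is that the relevant genericity hypothesis carried by $\bs\alpha$ throughout this section indeed includes $R^{(2)}(\bs\alpha) \neq 0$, which it does since $R = R^{(2)}R^{(3)}$ by \eqref{defR}; thus Corollary \ref{coro:ExistenceGenericInterpolant} applies verbatim with data $(\bs\alpha, s\bs\beta)$ for every $s \in \cF$, completing the proof.
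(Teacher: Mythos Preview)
Your proposal is correct and follows essentially the same approach as the paper: both identify $\pi_S^{-1}(s)$ with the set of interpolants for $(\bs\alpha, s\bs\beta)$ and invoke Corollary~\ref{coro:ExistenceGenericInterpolant} to conclude these fibers are nonempty, hence $\pi_S$ is dominant. The only cosmetic difference is that you use the paraphrase after Corollary~\ref{coro:ExistenceGenericInterpolant} to get nonemptiness for \emph{every} $s$ (since $R^{(2)}(\bs\alpha)\neq 0$), while the paper more cautiously argues that the line $\{(\bs\alpha,s\bs\beta):s\in\cF\}$ meets the open set $\mathcal U$, yielding nonemptiness for $s$ in a nonempty Zariski open subset of $\cF^1$; either version suffices.
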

\begin{proof}
For $s\in \cF$, we write $\pi_S^{-1}(s)= W_s \times \{s\}$, where
$W_s \subset\cF^{d+e-1}$ is the set of solutions $(g,h)$ of the system
$(G\circ H)(\alpha_i)-s\beta_i=0$ $\text{ for } 1\le i\le d+e-1 $, namely the
set of interpolants for the interpolation problem determined by
$(\bs\alpha,s\bs \beta)$. By Corollary
\ref{coro:ExistenceGenericInterpolant},
any $(\bs\gamma,\bs \delta)\in \mathcal U$ admits a nonempty finite set of
interpolants. As $(\bs\alpha,\bs\beta)$ belongs to this dense Zariski open subset
$\mathcal U \subset \cF^{2(d+e-1)}$, it follows
that the affine line
$L_{(\bs\alpha,\bs\beta)}=\{(\bs\alpha,s\bs\beta) \colon s\in\cF^1\}$ has a
nonempty intersection with $\mathcal U$. In particular,
$L_{(\bs\alpha,\bs\beta)}\cap \mathcal U\}$ is a nonempty Zariski open subset
of $L_{(\bs\alpha,\bs\beta)}$, which proves that there is a nonempty
Zariski open subset $U_S = \{s \colon (\bs \alpha, \bs \beta s) \in L(\bs \alpha, \bs \beta) \cap \mathcal U \}$ of $\cF^1$,
so that the interpolation problem
determined by $(\bs\alpha,s\bs\beta)$ for $s \in U_S$ admits a finite positive
number of interpolants. In particular, $\pi_S^{-1}(s)$ is nonempty
for any $s\in U_S$, which proves that $\pi_S$ is dominant.
\end{proof}

Let $V^*=V_1\cup\cdots\cup V_u$ be the decomposition of $V^*$ into
irreducible components defined over $\F$. Since $\pi_S$ is dominant,
we may assume that the restriction $\pi_S|_{V_i} \colon V_i\to\cF^1$ of
$\pi_S$ to $V_i$ is dominant for $1\le i\le t$ and not dominant for
$t+1\le i\le u$, with some $t \in \{1,\ldots, u\}$.
We now define the curve which describes our deformation:
\begin{equation}
\label{defV}
V=V_1\cup\cdots\cup V_t \subset\cF^{d+e}.
\end{equation}
\begin{lemma}\label{lemma:dimV_i=1}
For $1\le i\le t$, $\dim V_i=1$.
\end{lemma}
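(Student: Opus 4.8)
The plan is to pin down the dimension of each $V_i$ by sandwiching it between an upper bound, coming from the defining equations, and a lower bound, coming from the fact that $\pi_S|_{V_i}$ is dominant onto a curve. First I would establish the upper bound: $V^*\subset\cF^{d+e}$ is defined by the $d+e-1$ equations \eqref{Vequations}, so by Krull's principal ideal theorem every irreducible component of $V^*$ has dimension at least $(d+e)-(d+e-1)=1$. This already gives $\dim V_i \ge 1$ for all $i$, in particular for $1 \le i \le t$. Conversely, I would bound the dimension from above using the fiber structure over the $S$-line. For a generic $s \in \cF$, actually for every $s$ with $(\bs\alpha,s\bs\beta)$ generic in the sense of Definition \ref{defGeneric} — and such $s$ form a nonempty Zariski-open subset $U_S$ of $\cF^1$ by the argument in Lemma \ref{Lemma:Pi_S_Dominant}, since $\bs\alpha$ is generic — the fiber $\pi_S^{-1}(s) = W_s \times \{s\}$ is the set of interpolants for $(\bs\alpha,s\bs\beta)$, which by Corollary \ref{coro:ExistenceGenericInterpolant} is finite (zero-dimensional).

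Now I would invoke the theorem on the dimension of fibers applied to the dominant morphism $\pi_S|_{V_i}\colon V_i \to \cF^1$ for $1 \le i \le t$: since $\overline{\pi_S(V_i)} = \cF^1$ has dimension $1$, there is a nonempty open subset of $\cF^1$ over which every nonempty fiber of $\pi_S|_{V_i}$ has dimension exactly $\dim V_i - 1$. Intersecting that open set with $U_S$ (both nonempty open in the irreducible $\cF^1$, hence with nonempty intersection) and picking an $s$ in the intersection for which $\pi_S|_{V_i}^{-1}(s) \neq \emptyset$, we get that this fiber is contained in $\pi_S^{-1}(s)$, which is zero-dimensional; hence $\dim V_i - 1 = 0$, i.e. $\dim V_i = 1$. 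Combined with the lower bound $\dim V_i \ge 1$ from the principal ideal theorem, this forces $\dim V_i = 1$.

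One technical point to be careful about is making sure that for the chosen $s$ the fiber $\pi_S|_{V_i}^{-1}(s)$ is genuinely nonempty; this is automatic on a dense open subset of the image of a dominant morphism, so after shrinking $U_S$ if necessary we may assume it holds simultaneously with the dimension-of-fibers conclusion. The other point is that we only need $\dim V_i = 1$ for the components $V_i$ with $i \le t$ (the ones dominating the $S$-line); the components with $i > t$ are irrelevant to $V$ and we make no claim about them here. I do not expect a serious obstacle: the only thing to watch is that all the relevant open sets of $\cF^1$ are nonempty and hence intersect, which is immediate from irreducibility of the affine line, and that Corollary \ref{coro:ExistenceGenericInterpolant} indeed applies because the generic hypothesis on $\bs\alpha$ guarantees $R^{(2)}(\bs\alpha)\neq 0$, hence $R^{(2)}(\bs\alpha)\neq 0$ along the line $L_{(\bs\alpha,\bs\beta)}$ as well.
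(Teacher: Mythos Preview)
Your proposal is correct and follows essentially the same route as the paper: both obtain $\dim V_i\ge 1$ from the fact that $V^*$ is cut out by $d+e-1$ equations in $\cF^{d+e}$, and both obtain $\dim V_i\le 1$ by applying the theorem on the dimension of fibers to the dominant map $\pi_S|_{V_i}$ together with the zero-dimensionality of the generic fiber established via Lemma \ref{Lemma:Pi_S_Dominant} and Corollary \ref{coro:ExistenceGenericInterpolant}. Your treatment is slightly more explicit about the technical point of ensuring the chosen $s$ lies in the image of $\pi_S|_{V_i}$, but otherwise the arguments coincide.
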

\begin{proof}
Fix $i$ with $1\le i\le t$. The Theorem on the dimension of fibers
(see, e.g., \cite[\S 6.3, Theorem 7]{Shafarevich94}) shows that
$$\dim V_i-1=\dim(\pi_S|_{V_i})^{-1}(s)$$
for $s \in {\mathcal U}_S$.
The proof of
Lemma \ref{Lemma:Pi_S_Dominant} shows that $\pi_S^{-1}(s)$ has
dimension 0 for generic $s\in\cF^1$, which implies that
$$\dim V_i-1=\dim(\pi_S|_{V_i})^{-1}(s)\le 0.$$
Taking into account that $V\subset\cF^{d+e}$ is defined by the
$d+e-1$ equations \eqref{Vequations}, it follows
that $\dim V_i\ge 1$ for $1\le i\le u$. We conclude that
$\dim V_i=1$.
\end{proof}

Next we give a more intrinsic definition of the curve $V$. Let
$Q_i=(G\circ H)(\alpha_i)-\beta_iS$ $\text{ for } 1\le i\le d+e-1 $, denote $\bs
Q=(Q_1,\ldots,Q_{d+e-1})$ and let $J$ be the Jacobian determinant of
$\bs Q$ with respect to $\bs G$ and $\bs H$, that is,
$$J=\det(\partial \bs Q/\partial(\bs G,\bs H)).$$
Let $\mathcal{I}\subset\F[\bs G,\bs H, S]$ be the ideal generated by
$Q_1,\ldots,Q_{d+e-1}$ and let
$$\mathcal{I}:J^\infty=\{Q\in \F[\bs
G,\bs H, S] \colon \exists\, m\ge 0\textrm{ with }J^mQ\in\mathcal{I}\}$$
be the saturation of $\mathcal{I}$ with respect to $J$.
Its variety $\mathcal{Z}(\mathcal{I}:J^\infty)\subset\F^{d+e}$ is the
Zariski closure of the locally closed set
$\mathcal{Z}(\mathcal{I})\setminus\mathcal{Z}(J)$.
\begin{lemma}\label{lemma:V_asSaturation}
$V=\mathcal{Z}(\mathcal{I}:J^\infty)$.
\end{lemma}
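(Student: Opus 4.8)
The plan is to prove the set-theoretic equality $V = \mathcal{Z}(\mathcal{I}:J^\infty)$ by showing each side is contained in the other. Recall that $\mathcal{Z}(\mathcal{I}:J^\infty)$ is the Zariski closure of $\mathcal{Z}(\mathcal{I}) \setminus \mathcal{Z}(J) = V^* \setminus \mathcal{Z}(J)$, where $J = \det(\partial \bs Q/\partial(\bs G, \bs H))$ is the same Jacobian studied in Lemma \ref{lemma:unramifiedalpha0} and Proposition \ref{prop:FieldExtensionFGammaIsSeparable} (restricted now to the slice over the fixed generic $\bs\alpha$, with $S$ playing the role of the extra parameter). So the statement amounts to: among the irreducible components $V_1,\ldots,V_u$ of $V^*$, the ones on which $J$ does \emph{not} vanish identically are exactly $V_1,\ldots,V_t$, the ones on which $\pi_S$ is dominant.

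First I would handle the inclusion $\mathcal{Z}(\mathcal{I}:J^\infty) \subseteq V$. It suffices to show that each component $V_i$ with $t+1 \le i \le u$ is contained in $\mathcal{Z}(J)$; then $V^* \setminus \mathcal{Z}(J) \subseteq V_1 \cup \cdots \cup V_t = V$, and taking closures (using that $V$ is closed) gives the inclusion. For such a $V_i$, the projection $\pi_S|_{V_i}$ is not dominant, so $\pi_S(V_i) = \{s_0\}$ for a single $s_0 \in \cF$; that is, $V_i$ lies in the hyperplane $S = s_0$ and is a component of the fiber $W_{s_0} \times \{s_0\}$, i.e. $V_i$ corresponds to a positive-dimensional component of the set of interpolants $W_{s_0}$ for the problem $(\bs\alpha, s_0 \bs\beta)$. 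But by Corollary \ref{coro:ExistenceGenericInterpolant}, since $\bs\alpha$ is generic ($R^{(2)}(\bs\alpha)\neq 0$), the fiber $\pi^{-1}(\bs\alpha, s_0\bs\beta)$ is zero-dimensional whenever it is nonempty --- contradicting $\dim V_i \ge 1$. Hence no such positive-dimensional $V_i$ can have $J$ nonvanishing on it: more precisely, at a point of $V_i$ where $J \neq 0$, the implicit function theorem / the unramifiedness argument of Proposition \ref{prop:FieldExtensionFGammaIsSeparable} forces the fiber of $\pi_S$ through that point to be zero-dimensional locally, so $V_i$ would be zero-dimensional near that point, impossible since $\dim V_i = 1$ (one gets this from Lemma \ref{lemma:dimV_i=1}'s argument applied to components, or directly: $\dim V_i \ge 1$ as $V^*$ is cut out by $d+e-1$ equations in $\cF^{d+e}$). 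Therefore $V_i \subseteq \mathcal{Z}(J)$ for all $i > t$.

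Next, the inclusion $V \subseteq \mathcal{Z}(\mathcal{I}:J^\infty)$, equivalently $V \setminus \mathcal{Z}(\mathcal{I}:J^\infty) = \emptyset$: I must show each $V_i$ with $1 \le i \le t$ meets the open set $\{J \neq 0\}$, for then $V_i \cap \{J\neq 0\}$ is dense in $V_i$ and lies in $V^*\setminus\mathcal{Z}(J)$, whose closure is $\mathcal{Z}(\mathcal{I}:J^\infty)$; since $V_i$ is irreducible, $V_i \subseteq \mathcal{Z}(\mathcal{I}:J^\infty)$. Suppose to the contrary that $J$ vanishes identically on some $V_i$ with $i \le t$. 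Since $\pi_S|_{V_i}$ is dominant and $\dim V_i = 1$ (Lemma \ref{lemma:dimV_i=1}), there is a nonempty open $U \subseteq \mathcal{U}_S \subseteq \cF^1$ over which the fibers $(\pi_S|_{V_i})^{-1}(s)$ are nonempty and finite. For $s \in U$ close to $0$, pick a point $(g,h,s) \in V_i$; by continuity and genericity I can also arrange $s$ so that at $(g,h)$ the argument of Lemma \ref{lemma:unramifiedalpha0} applies (that lemma's proof works for any $\bs\beta$ with $\#\bs\beta \le d$, and more relevantly the full-rank conclusion holds on a dense open subset of the relevant fiber by Proposition \ref{prop:FieldExtensionFGammaIsSeparable}). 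Concretely: the polynomial $\det \bs J$ of Proposition \ref{prop:FieldExtensionFGammaIsSeparable} is not identically zero, and its non-vanishing locus meets $V^*$; I would argue it meets $V_i$ in particular by a dimension/irreducibility count --- this is the step I expect to be the main obstacle, because ruling out that $\det\bs J$ vanishes on the \emph{specific} component $V_i$ (rather than just on some component) requires care. One clean route: the birational parametrization of $\Gamma_{n,d}$ by $\F[\bs A, \bs G, \bs H]$ (proof of Lemma \ref{lemma:geometryGamma_n,d}) shows $\det\bs J$, pulled back, is a nonzero element of a domain, hence nonzero on the whole irreducible incidence variety's open dense locus; intersecting with the line $\{\bs\beta\mapsto s\bs\beta\}$ and using that each $V_i$ ($i \le t$) dominates $\cF^1$ via $\pi_S$ and hence is "visible" along this line, one concludes $J$ does not vanish identically on $V_i$. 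Combining both inclusions yields $V = \mathcal{Z}(\mathcal{I}:J^\infty)$.
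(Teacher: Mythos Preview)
Your argument for the inclusion $\mathcal{Z}(\mathcal{I}:J^\infty)\subseteq V$ is essentially the paper's: if $i>t$ then $V_i$ lies in a single fiber $\{S=s_i\}$, and at any point of $V_i$ where $J\neq 0$ the Jacobian criterion forces that point to be isolated in the fiber, hence isolated in $V_i$, contradicting $\dim V_i\ge 1$. (Your detour through Corollary~\ref{coro:ExistenceGenericInterpolant} is unnecessary and a bit misleading, since that corollary's zero-dimensionality claim is really a generic-fiber statement; the Jacobian argument you give afterwards is the one that actually works, and it is exactly what the paper does.)

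The gap is in the reverse inclusion. You need, for each $i\le t$, a point of $V_i$ at which $J\ne 0$, and you acknowledge that this is ``the main obstacle''. Your two suggested routes do not close it. The appeal to ``$s$ close to $0$'' and ``continuity'' is not algebraic and has no force here. The second route---pulling $\det\bs J$ back to the domain $\F[\bs A,\bs G,\bs H]$ via Lemma~\ref{lemma:geometryGamma_n,d}---shows only that $\det\bs J$ is nonzero on the irreducible variety $\Gamma_{n,d}$; it does not tell you anything about its restriction to the particular curve $V_i$, which is a codimension-$(2d+2e-3)$ slice. Knowing a function is generically nonzero on a big variety does not prevent it from vanishing identically on a given subvariety.

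The paper closes this gap with Lemma~\ref{lemma:FiberUnramified}. Because $(\bs\alpha,\bs\beta)$ is generic, the line $\{(\bs\alpha,s\bs\beta):s\in\cF\}$ meets the open set $\mathcal U$ in a nonempty open $U_S\subset\cF^1$, and for every $s\in U_S$ the map $\pi$ is unramified at \emph{every} point of $\pi^{-1}(\bs\alpha,s\bs\beta)$; equivalently, $J$ is nonzero at every point of $\pi_S^{-1}(s)$. Since $\pi_S|_{V_i}$ is dominant, $\pi_S(V_i)\cap U_S\neq\emptyset$, so $V_i$ contains a point where $J\ne 0$. The crucial feature you are missing is that Lemma~\ref{lemma:FiberUnramified} upgrades ``$J\ne 0$ generically on $\Gamma_{n,d}$'' to ``$J\ne 0$ on the entire fiber over a generic base point'', which is exactly what is needed to hit each dominant component.
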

\begin{proof}
For a component $V_i$ with $i>t$ we have $\dim\pi_S(V_i)=0$.
Further, as $V_i$ is irreducible, also $\pi_S(V_i)$ is irreducible,
so that $\pi_S(V_i)=\{s_i\}$ for a specific $s_i\in\cF^1$. The fiber
$\pi_S^{-1}(s_i)$ is the set of common zeros in
$\cF^{d+e-1}$ of the polynomials $Q_j(\bs G,\bs H,s_i)$ $(1\le j\le
d+e-1)$. For a given point $(\bs g,\bs h,s_i)\in\pi_S^{-1}(s_i)\cap
V_i$, if $J(\bs g,\bs h,s_i)$ was nonzero, then $(\bs g,\bs h,s_i)$ would
be an isolated point of $V_i$, contradicting the fact that $\dim
V_i\ge 1$. It follows that $J(\bs g,\bs h,s_i)=0$ for any $(\bs
g,\bs h,s_i)\in V_i$ and $\mathcal{Z}(\mathcal{I}:J^\infty) \subseteq V$.

On the other hand, let  $1\le i\le t$. By Lemma \ref{lemma:dimV_i=1}, $V_i$
has dimension 1 and
$\pi_S|_{V_i} \colon V_i\to\cF^1$ is dominant. Since $V_{t+1}\cup\cdots\cup
V_u$ projects onto the finite set $\{s_{t+1},\ldots,s_u\}$, for
$s\in\cF^1\setminus\{s_{t+1},\ldots,s_u\}$ the fiber $\pi_S^{-1}(s)$
does not intersect $V_{t+1}\cup\cdots\cup V_u$. Further, since
$\pi_S^{-1}(1)=\pi^{-1}(\bs\alpha,\bs\beta) \times \{1\}$ and
$\#\pi^{-1}(\bs\alpha,\bs\beta)=\deg\pi$ by Fact \ref{coro:BoundGenericityConditionDegPiPoints}
and the genericity of $\bs \alpha \in \mathcal U$,
we conclude that
$\#\pi_S^{-1}(s)=\deg\pi$ for generic $s\in\cF^1$. In particular, by
Lemma \ref{lemma:FiberUnramified} we have that $\pi$ is unramified
at any $(\bs g,\bs h,\bs\gamma,\bs\delta)
\in\pi^{-1}(\bs\gamma,\bs\delta)$ with $(\bs\gamma,\bs\delta)\in
\mathcal U$, which implies that $\pi_S$ is unramified at any $(\bs g, \bs
h,s) \in\pi_S^{-1}(s)$ with $s$ in the nonempty Zariski open subset
$U_S$ of $\cF^1$.

We pick some $s\in\pi_S(V_i) \cap U_S$
and $(\bs g,\bs h,s)\in\pi_S^{-1}(s)$.
Then $J(\bs g,\bs h,s)\not=0$, so that
$V_i$ is not contained in $\{J=0\}$. It
follows that the Zariski closure $V_i$ of $V_i\setminus\{J=0\}$ is
contained in the zero set $\mathcal{Z}(\mathcal{I}:J^\infty)$,
which completes the proof of the lemma.
\end{proof}

With a slight abuse of notation, we also denote the
restriction of $\pi_S$ to $V$ as $\pi_S$, that is, we consider
the projection
$$\pi_S \colon V\to\cF^1 \text{ with } \pi_S(\bs g,\bs h,s)=s.$$
As $V=V_1\cup\cdots\cup V_t$ is the irreducible decomposition of $V$
over $\F$ and $\pi_S|_{V_i} \colon V_i\to\cF^1$ is dominant for
$1\le i\le t$, we define
$$\deg\pi_S=\sum_{1 \leq i \leq t} \deg \pi_S|_{V_i}.$$
In the next corollary we collect several consequences of Lemma
\ref{lemma:V_asSaturation}.
\begin{corollary}
\label{ViSeparable}
The following assertions hold:
\begin{enumerate}
  \item the extension $\F(S)\hookrightarrow\F(V_i)$ is separable for $1\le
i\le t$;
\item $\deg\pi_S=\deg\pi$;
\item $\mathcal{I}:J^\infty$ is a radical ideal.
\end{enumerate}
\end{corollary}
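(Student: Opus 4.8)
The plan is to read off all three assertions from Lemma \ref{lemma:V_asSaturation} together with the unramifiedness results already established. For (1): each $V_i$ with $1\le i\le t$ is an irreducible component of $V=\mathcal{Z}(\mathcal{I}:J^\infty)$, and by the last paragraph of the proof of Lemma \ref{lemma:V_asSaturation}, $V_i$ is \emph{not} contained in $\{J=0\}$, so $V_i\setminus\{J=0\}$ is a nonempty Zariski open subset of $V_i$. On this open set the Jacobian determinant $J=\det(\partial\bs Q/\partial(\bs G,\bs H))$ of the $d+e-1$ defining polynomials $\bs Q$ with respect to the fibre coordinates $(\bs G,\bs H)$ is invertible, which is exactly the condition that the projection $\pi_S|_{V_i}\colon V_i\to\cF^1$ (sending $(\bs g,\bs h,s)\mapsto s$) is unramified at those points. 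Unramifiedness on a nonempty open subset is equivalent, by Iversen \cite[Proposition T.8]{Iversen73} (exactly as invoked in the proof of Proposition \ref{prop:FieldExtensionFGammaIsSeparable}), to separability of the induced field extension $\F(S)\hookrightarrow\F(V_i)$.

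For (2): both $\pi\colon\Gamma_{n,d}\to\cF^{d+e-1}\times\cF^{d+e-1}$ and $\pi_S\colon V\to\cF^1$ are dominant morphisms of degree equal to the generic fibre cardinality (using \cite[Proposition 1]{Heintz83}, now legitimate since separability is known for $\pi$ by Proposition \ref{prop:FieldExtensionFGammaIsSeparable} and for each $V_i$ by part (1)). The proof of Lemma \ref{lemma:V_asSaturation} already shows that for generic $s\in\cF^1$ the fibre $\pi_S^{-1}(s)$ does not meet $V_{t+1}\cup\cdots\cup V_u$, that $\pi_S^{-1}(1)=\pi^{-1}(\bs\alpha,\bs\beta)\times\{1\}$, and that $\#\pi^{-1}(\bs\alpha,\bs\beta)=\deg\pi$; hence $\#\pi_S^{-1}(s)=\deg\pi$ for generic $s$. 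On the other hand $\#\pi_S^{-1}(s)=\sum_{i=1}^t\#(\pi_S|_{V_i})^{-1}(s)=\sum_{i=1}^t\deg\pi_S|_{V_i}=\deg\pi_S$ for generic $s$, by part (1) and \cite[Proposition 1]{Heintz83} applied componentwise. Comparing the two expressions gives $\deg\pi_S=\deg\pi$.

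For (3): since by part (1) each $\F(S)\hookrightarrow\F(V_i)$ is separable, the generic fibre of $\pi_S$ is reduced; equivalently, $\pi_S$ is unramified on a dense open subset of $V$, so $\det\bs J$ is not a zero divisor on any component $V_i$ (again the argument of Proposition \ref{prop:FieldExtensionFGammaIsSeparable}). But $V=\mathcal{Z}(\mathcal{I}:J^\infty)$ by Lemma \ref{lemma:V_asSaturation}, and $\mathcal{I}:J^\infty$ is by construction a \emph{saturation}: for any ideal $\mathcal{I}$ and element $J$ whose associated hypersurface contains no irreducible component of $\mathcal{Z}(\mathcal{I}:J^\infty)$, the saturation $\mathcal{I}:J^\infty$ agrees with the intersection of those primary components of $\mathcal{I}$ not contained in $\{J=0\}$; combined with the fact that $V$ is equidimensional of dimension $1$ (Lemma \ref{lemma:dimV_i=1}) and that $\F[\bs G,\bs H,S]_J$ localized at $J$ makes the quotient reduced --- because the Jacobian criterion holds generically on each $V_i$ --- one concludes $\mathcal{I}:J^\infty$ is radical. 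Concretely: $\mathcal{I}:J^\infty=\bigcap_i \mathfrak{p}_i$ where $\mathfrak{p}_i=\mathcal{I}(V_i)$, once one checks no embedded or non-reduced primary component survives the saturation, which is forced by $\dim V_i=1$ together with the smoothness (via $J\neq 0$) of $V_i$ at a generic point.

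The main obstacle is part (3): asserting radicality is not purely formal, since a priori $\mathcal{I}:J^\infty$ could still carry a non-reduced structure along a component even after saturating away $\{J=0\}$. The clean way is: on the open set $\{J\neq 0\}$ the variety $\mathcal{Z}(\mathcal{I})\setminus\mathcal{Z}(J)$ is smooth by the Jacobian criterion (the $(d+e-1)$ polynomials $\bs Q$ have full-rank Jacobian there), hence reduced, and $\mathcal{I}:J^\infty$ is exactly the ideal of the Zariski closure of this smooth locally closed set with no embedded components added --- so it equals $\bigcap_i \mathcal{I}(V_i)$ and is radical. I would spell out precisely this Jacobian-criterion-plus-saturation argument, being careful that ``$J$ vanishes on no component of $V$'' (established in Lemma \ref{lemma:V_asSaturation}) is what rules out any surviving non-reduced or embedded component.
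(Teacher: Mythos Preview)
Your proposal is correct and follows essentially the same route as the paper for all three parts: (1) uses that $V_i\not\subset\{J=0\}$ plus Iversen's criterion exactly as the paper does; (2) compares generic fibre cardinalities on both sides, matching the paper's counting argument at a generic $s^*$; for (3) the paper simply invokes \cite[Lemma 2.1]{GiMa19}, whose content is precisely the Jacobian-criterion-plus-saturation argument you spell out (localizing at $J$ makes the quotient ring reduced, and $\F[\bs G,\bs H,S]/(\mathcal{I}:J^\infty)$ embeds in that localization, hence is itself reduced). Your exposition of (3) is a bit meandering before you reach the clean formulation in your final paragraph---in a write-up you should lead with that version and drop the earlier hedged passes.
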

\begin{proof}
Fix $i$ with $1\le i\le t$.
Since $V_i$ is not
contained in $\{J=0\}$, $V_i\cap\{J=0\}$ has dimension zero, and
thus $V_i\setminus\{J=0\}$ is a nonempty Zariski open subset of $V_i$.
Furthermore, $\pi_S|_{V_i}$ is unramified precisely
at all points of this open subset.
According to \cite[Proposition T.8]{Iversen73}, the finite field extension $\F( %\bs
S)\hookrightarrow \F(V_i)$ is separable, which proves (1).

As a consequence, for $1\le i\le t$ there exists a nonempty Zariski
open subset $U_i$ of $\cF^1$ such that
$\#(\pi_S|_{V_i})^{-1}(s)=\deg(\pi_S|_{V_i})$ for any $s\in U_i$. On
the other hand, $\#\pi^{-1}(s)=\deg\pi$ for any $s$ in a nonempty
Zariski open subset $U_S\subset\cF^1$. Let $s^*\in U_1\cap\cdots\cap
U_t\cap U_S\setminus\{s_{t+1},\ldots,s_u\}$ be such
 that $\pi_S|_{V_i}^{-1}(s^*)\cap \pi_S|_{V_j}^{-1}(s^*) =
\emptyset$ for $i\not=j$. We have
$$\deg\pi_S=\sum_{i=1}^t\deg \pi_S|_{V_i}=\sum_{i=1}^t
\#(\pi_S|_{V_i})^{-1}(s^*)=\#\pi_S^{-1}(s^*)=\#\pi^{-1}(s)=\deg\pi.$$

Finally, since $\{J=0\}$ intersects properly each irreducible
component $V_i$ with $1\le i\le t$, by \cite[Lemma 2.1]{GiMa19} we
conclude that $\mathcal{I}:J^\infty$ is a radical ideal.
\end{proof}

Corollary \ref{ViSeparable} and Lemma \ref{lemma:UpperBoundDegPi}
imply the bounds
\begin{equation}
\label{DdeltaBound}
D =\deg\pi_S = \deg \pi \leq \delta = \deg V \leq d^{d+e-1}.
\end{equation}
%
%------------------------------------------------------------------------
%------------------------------------------------------------------------
%------------------------------------------------------------------------
%------------------------------------------------------------------------
%------------------------------------------------------------------------
%
Now assume that $\bs \lambda \in \F^{d+e-1}$ is such that the
linear form $\mathcal{L}=\bs\lambda\cdot (\bs G,\bs H)\in \F[\bs
G,\bs H]$ induces a coordinate function $\ell$ of $\F[V]$ which is a
primitive element of the extension $\F(S)\hookrightarrow \F(V_i)$
for $1\le i\le t$. In particular, the minimal polynomial
$m_{\bs\lambda}\in \F[S,T]$ of $\ell$ satisfies $\deg_T
m_{\bs\lambda}=D$.

For any $s\in \cF^1$, we identify each point in the fiber
$\pi_S^{-1}(s)$ with the vector of its $(\bs g,\bs h)$-coordinates
and denote by $W_{s}\subset\cF^{d+e-1}$ the set of such
points, so that $\pi_S^{-1}(s)= W_{s} \times \{s\}$.
In particular,
$$\pi_S^{-1}(0)=W_0 \times \{0\} = \pi^{-1}(\bs \alpha,\bs 0) \times \{0\},
\quad \pi_S^{-1}(1)= W_1 \times \{1\} =
\pi^{-1}(\bs\alpha,\bs\beta) \times \{1\} .$$
Let $\bs\Lambda=(\Lambda_1,\ldots, \Lambda_{d+e-1})$ be a vector of
new indeterminates over $\F$.
\begin{lemma}\label{lemma:RandomChoiceLinearForm} There
exists a nonzero polynomial $R^{(4)} \in \F[\bs\Lambda]$ of degree at most
$4D^2$ such that for any $\bs\lambda\in \F^{d+e-1}$ with
$R^{(4)}  (\bs\lambda)\neq 0$, the linear form $\mathcal{L}=\bs\lambda\cdot
(\bs G,\bs H)$ separates the points of $W_0 \cup W_1$.
\end{lemma}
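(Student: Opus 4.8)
The idea is elementary: a linear form $\mathcal L=\bs\lambda\cdot(\bs G,\bs H)$ separates a finite set $W\subset\cF^{d+e-1}$ exactly when $\bs\lambda\cdot(P-Q)\neq0$ for every pair of distinct points $P,Q\in W$. First I would bound $N:=\#(W_0\cup W_1)$. By Corollary \ref{coro:zero_dimensional_fiber_pi}, $W_0=\pi^{-1}(\bs\alpha,\bs 0)$ has $S_2(d+e-1,d)$ points, all $\F$-rational; since $\bs\alpha$ is generic, Fact \ref{coro:BoundGenericityConditionDegPiPoints} (as used in the proof of Lemma \ref{lemma:V_asSaturation}) gives $\#W_1=\#\pi^{-1}(\bs\alpha,\bs\beta)=\deg\pi=D$, while \eqref{piBounds} gives $S_2(d+e-1,d)\le D$; hence $N\le \#W_0+\#W_1\le 2D$. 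If $\F$ is perfect one is now done: the product $\prod_{\{P,Q\}}\bigl(\bs\Lambda\cdot(P-Q)\bigr)^2$ over the (at most $\binom{N}{2}$) unordered pairs of distinct points of $W_0\cup W_1$ is a nonzero polynomial in $\F[\bs\Lambda]$, by Galois invariance, of total degree $N(N-1)\le 2D(2D-1)\le 4D^2$, and its nonvanishing at $\bs\lambda$ is precisely the separation condition.

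For an arbitrary field $\F$ I would obtain the same polynomial intrinsically, so that rationality over $\F$ is automatic. Note that $W_0\cup W_1$ is the zero set of an ideal generated over $\F$, namely the intersection of the vanishing ideals of $W_0$ and of $W_1$, each cut out by the $\F$-rational equations $(G\circ H)(\alpha_i)-s\beta_i$ for $s=0$, resp.\ $s=1$. Let $B$ be its coordinate ring, a finite-dimensional $\F$-algebra, regard $\bs\Lambda\cdot(\bs G,\bs H)$ as an element of $\F[\bs\Lambda]\otimes_\F B$, and let $\chi(T)\in\F[\bs\Lambda][T]$ be the characteristic polynomial of multiplication by this element. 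I would then set $R^{(4)}:=\operatorname{disc}_T\chi\in\F[\bs\Lambda]$.

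To identify $R^{(4)}$ with the product above I need $B$ to be étale over $\F$ of $\F$-dimension $N$. The component supported on $W_0$ is $\F^{\#W_0}$, since those points are $\F$-rational. The component supported on $W_1$ is the coordinate ring of the fiber of $\pi$ over the $\F$-rational point $(\bs\alpha,\bs\beta)$; genericity together with Lemma \ref{lemma:FiberUnramified} and Proposition \ref{prop:FieldExtensionFGammaIsSeparable} shows this fiber is reduced and unramified over $\F$, hence a product of separable field extensions of $\F$. Then base change to $\cF(\bs\Lambda)$ splits $B$ into $N$ copies of $\cF(\bs\Lambda)$, diagonalizing the multiplication operator with eigenvalues $\bs\Lambda\cdot P$, $P\in W_0\cup W_1$; so $\chi(T)=\prod_{P\in W_0\cup W_1}(T-\bs\Lambda\cdot P)$ and $R^{(4)}=\operatorname{disc}_T\chi=\pm\prod_{P\ne Q}\bigl(\bs\Lambda\cdot(P-Q)\bigr)$, the product over ordered pairs of distinct points. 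This is nonzero (each factor is a nonzero linear form, as $P\ne Q$), lies in $\F[\bs\Lambda]$, and has total degree $N(N-1)\le 2D(2D-1)\le 4D^2$. Finally, if $\bs\lambda\in\F^{d+e-1}$ satisfies $R^{(4)}(\bs\lambda)\ne0$, then the scalars $\bs\lambda\cdot P$, $P\in W_0\cup W_1$, are pairwise distinct, i.e.\ $\mathcal L=\bs\lambda\cdot(\bs G,\bs H)$ separates the points of $W_0\cup W_1$, as required.

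The only real obstacle is this matter of $\F$-rationality: the individual points of $W_1$ need not be defined over $\F$, so the naive product of separation forms cannot be declared an $\F$-polynomial by a mere symmetry argument when $\F$ is imperfect. Routing through the characteristic polynomial of the $\F$-algebra $B$ repairs this, but only because genericity forces $B$ to be étale over $\F$; the slightly delicate step is therefore to invoke the earlier unramifiedness and separability results correctly to justify that étaleness. Everything else — the cardinality bound and the discriminant computation — is routine.
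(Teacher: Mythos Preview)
Your core approach is exactly the paper's: define $R^{(4)}$ as the product $\prod_{p\neq q}(\bs\Lambda\cdot p-\bs\Lambda\cdot q)$ over ordered pairs of distinct points of $W_0\cup W_1$, bound $\#(W_0\cup W_1)\le 2D$ (the paper cites Lemma~\ref{lemma:FiberUnramified} directly for this), and conclude $\deg R^{(4)}\le 4D^2$.

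The difference is that the paper simply declares this product to lie in $\F[\bs\Lambda]$ without comment, whereas you take the rationality seriously and recover the same polynomial as $\operatorname{disc}_T\chi$, where $\chi$ is the characteristic polynomial of multiplication by $\bs\Lambda\cdot(\bs G,\bs H)$ on the $\F$-coordinate ring $B$ of $W_0\cup W_1$. This is a genuine refinement: for imperfect $\F$ the naive Galois-symmetry argument is unavailable, and your route through the \'etale algebra $B$ (using Lemma~\ref{lemma:FiberUnramified} and Proposition~\ref{prop:FieldExtensionFGammaIsSeparable} to get reducedness and separability of the fiber over $(\bs\alpha,\bs\beta)$) is the clean way to justify $R^{(4)}\in\F[\bs\Lambda]$. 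The paper's version is shorter; yours actually closes the gap.
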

\begin{proof}
We consider the generic linear form
$\mathcal{L}_{\bs\Lambda}= \bs\Lambda\cdot (\bs G,\bs H)$ and define
$$R^{(4)}
=\displaystyle \prod_{p \neq q \in W_0\cup W_1}
(\mathcal{L}_{\bs\Lambda}(p)- \mathcal{L}_{\bs\Lambda}(q)) \in \F[\Lambda].$$
For $p \neq q$, we have $\mathcal{L}_{\bs\Lambda}(p) \neq \mathcal{L}_{\bs\Lambda}(q)$.
Since $\# W_0\cup W_1 \le 2D$ by Lemma \ref{lemma:FiberUnramified}, it follows that the nonzero polynomial $R^{(4)} \in
\F[\bs\Lambda]$ has degree at most $4D^2$. Any $\bs\lambda\in
\F^{d+e-1}$ not annihilating $R^{(4)} $ defines a linear form $\mathcal{L}_{\bs \lambda}$
that separates the points of $W_0 \cup W_1$.
\end{proof}
%
%------------------------------------------------------------------------
%------------------------------------------------------------------------
%------------------------------------------------------------------------
%------------------------------------------------------------------------
%
\section{An algorithm for decomposable interpolation}

The algorithm for computing an interpolant $(g,h)$ for the input
$(\bs \alpha, \bs \beta)$ with a generic $\bs \alpha$ proceeds as
follows. Computing in $\F[\bs G,\bs H,S]$ is straightforward, and we
can also take its elements modulo the ideal ${\mathcal I}(V)$ of the
homotopy curve $V$ to $\F[V] = \F[\bs G,\bs H,S]/ {\mathcal I}(V)$.
But testing whether an element of $\F[\bs G,\bs H,S]$ is zero in
$\F[V]$ amounts to an ideal membership test, which is a nontrivial
task. In order to circumvent this, we work with power series
expansions.
Recall the general assumption that $\char(\F)$ does not divide $d$.

From Section \ref{section:specialBeta}, we find an interpolating
polynomial $(g_0, h_0) \in W_0$ for $(\bs \alpha, \bs 0)$, so that
$\bs v = (g_0, h_0,0) \in V$. Now $\bs v$ actually lies on a single
irreducible component of $V$, say $\bs v \in V_1$. Furthermore, $S
\in \F[\bs G,\bs H,S]$ is a local parameter on $V_1$ at $\bs v$, so
that there is an injective morphism of $\F$-algebras $\tau \colon
{\mathcal O}_{\bs v} \to \F[\![Y]\!]$ from the local ring ${\mathcal
O}_{\bs v}$ of $V_1$ at $\bs v$ to the ring of power series in a new
variable $Y$. Since $\tau(S) = Y$ generates this ring, $\tau$ is
actually an isomorphism.
%\cite[II.2]{Shafarevich94}

To simplify notation, we abbreviate $\bs G|\bs H = (G_{d-1}, \ldots,
G_0, H_{e-1}, \ldots, H_1)$, a vector of $d+e-1$ indeterminates. For
each $i < d$, $\gamma_i = \tau (G_i)  \in \F[\![Y]\!]$ is a power
series, and similarly $\eta_i = \tau (H_i)  \in \F[\![Y]\!]$ for $i
< e$. We abbreviate again $\bs\gamma|\bs\eta = (\gamma_{d-1},
\ldots, \gamma_0, \eta_{e-1}, \ldots, \eta_1)$.

A Newton-Hensel iteration computes an approximation of
$\bs\gamma|\bs\eta$ to a sufficiently large precision $N$, that is,
$\bs\psi \in (\F[Y]/(Y^N))^{d+e-1}$ with $\bs\psi \equiv
\bs\gamma|\bs\eta \bmod Y^N$. For a randomly chosen $\bs \lambda \in
\F^{d+e-1}$, we take the linear combination
$\bs\lambda\cdot(\bs\gamma|\bs\eta)=\sum_{1 \leq i \le d}
\lambda_i\gamma_{d-i}+\sum_{d< i < d+e} \lambda_i\eta_{d+e-i}$, its
truncation $L = \sum_{1 \leq i < d+e} \lambda_i \psi_i \in
\F[Y]/(Y^N)$ and find the minimal polynomial $m \in \F[Y,T]$ of
$\bs\lambda\cdot\bs\gamma|\bs\eta$, which equals the polynomial of
minimal degree in $T$ with $\deg m\le d^{d+e-1}$ such that $m(Y,L)
\equiv 0 \bmod Y^N$. Furthermore, we calculate $m'((\bs G|\bs H)_i)$
for each $i<e+d$, where $m' = \partial m /
\partial T$ is the derivative of $m$. These data provide a geometric
solution of $V_1$, and it then remains to substitute $S=1$ in them
to find the desired interpolation polynomial $(g,h)$ for $(\bs
\alpha, \bs \beta)$.

We sketch the whole algorithm as follows.
\begin{algorithm}
\label{mainAlgo}
${}^{}$

Input: Generic $(\bs \alpha, \bs \beta) \in \F^{d+e-1} \times
\F^{d+e-1}$, real $\epsilon$ with $0 < \epsilon <1$, access to a
finite set ${\mathcal S} \subseteq \F$ with at least $32
\epsilon^{-1} (d^2+de)^2d^{5(d+e-1)}$ elements.

Output:
A composite interpolating polynomial $f$ for
$(\bs \alpha, \bs \beta)$ as in Problem \ref{problem:interpolation}, or ``fail''.

\begin{enumerate}
\item
\label{step:initialPoint}%
Compute a point $(\bs g_0,\bs h_0)\in W_0$.
\item
\label{step:NHoperator}%
Set $M = d^{d+e-1}+1$ and $N = 2d^{2(d+e-1)} +1$. Compute truncated
power series approximations $\bs \gamma|\bs \eta \in
(\F[Y]/(Y^N))^{d+e-1}$ for
%\F[[\!Y\!]]$ for
$\bs G|\bs H$, with $\lceil\log_2 N \rceil$ steps of the following
quadratic Newton-Hensel iteration for solving the equations $\bs
P=(P_1,\ldots,P_{d+e-1})=0$, where $P_j(\bs G,\bs H)= (G\circ
H)(\alpha_j)-\beta_j\, S \in \F[G, H, S]$ for $1\le j\le d+e-1$. The
initial value is $(\bs g_0,\bs h_0)$. One iteration step is given by
the Newton-Hensel operator $$N_{\bs P}(\bs \gamma,\bs \eta)=(\bs
\gamma,\bs \eta)-(J_{\bs P}^{-1}(\bs \gamma,\bs \eta,Y)) \bigg( \bs
P(\bs \gamma,\bs \eta,Y)^{\sf T} \bigg) \in \F[\![Y]\!]^{d+e-1},$$
where $J_{\bs P} = \partial \bs P / {\partial (G|H)} \in
\F[G,H,S]^{(d+e-1) \times (d+e-1)}$ is the Jacobian matrix of the
system and $\sf T$ denotes transposition. Let
$\psi %= (\gamma, \eta)
\in (\F[Y]/(Y^N))^{d+e-1}$ be the resulting vector of power series
truncated up to order $N$.
\item
\label{step:chooseLambda}%
 Choose  $\bs\lambda\in \mathcal{S}^{d+e-1}$ uniformly at random.
For $1 \leq j < d+e$,
let
$$
\bs Z^{(j)} = (\lambda_1, \ldots, \lambda_{j-1},
\lambda_j+\Lambda_j, \lambda_{j+1}, \ldots, \lambda_{d+e-1}),
$$
where $\Lambda_1, \ldots, \Lambda_{d+e-1}$ are new indeterminates.
Compute
\begin{eqnarray*}
L^{(j)}&  = & \bs Z^{(j)} \cdot \bs \psi = \sum_{1 \leq i < d+e} \bs
Z^{(j)}_i \psi_i
\in \F[\Lambda_j][Y]/(Y^N),\\
&& (L^{(j)})^2, \ldots, (L^{(j)})^{2M} \in
\F[\Lambda_j][Y]/(Y^N,\Lambda_j^2).
\end{eqnarray*}
\item
\label{step:findMinPoly} For each $j\in \{1,\ldots,d+e-1\}$, find by
binary search the smallest value $k_j$ of $\ell$
for which
\begin{equation}
\label{binarySearch}
 \sum_{0 \leq i \le \ell} a_i^{(j)} (L^{(j)})^i  \equiv 0 \bmod
 (Y^N,\Lambda_j^2)
\end{equation}
has a solution $a_0^{(j)}, \ldots, a_{\ell}^{(j)} \in
\F[\Lambda_j][Y]$ with $\deg_Y a_i^{(j)} < M$ for $i \leq \ell$
and $a_\ell^{(j)}$ monic. This corresponds to a system of linear
equations for the coefficients in $\F[\Lambda_j]/(\Lambda_j^2)$ of
the $a_i^{(j)}$'s. Compute such a solution $a^{(j)}_0, \ldots,
a^{(j)}_{k_j}$ with $a^{(j)}_{k_j}$ monic of minimal degree $M_j$.
[Then $\ell \leq 2M$ and $M_j\le 2M$ for all $\ell$ and $M_j$
considered.]
\item
\label{step:geomSolution} For each $j\in \{1,\ldots,d+e-1\}$, let
\begin{align*}
m^{(j)} & =  \sum_{0 \leq i \leq k_j} a^{(j)}_i T^i \in
\F[\Lambda_j][Y,T],\quad  w^{(j)} = -\partial m^{(j)}/\partial
\Lambda_j \in \F[\Lambda_j][Y,T].
\end{align*}
Set
\begin{align*}
m & =  m^{(1)}|_{\Lambda_1 \leftarrow 0} \in \F[Y,T],\quad v_j =
w^{(j)}|_{\Lambda_j \leftarrow 0}+Y \cdot (\partial m/
\partial T)\in \F[Y,T].
\end{align*}
[Then $\bs \lambda, m, v_1, \ldots, v_{d+e-1}$ form a geometric
solution of $V_1$.]
\item
\label{step:subsS=1} Substitute 1 for $Y$ in the polynomials $m,
v_1, \ldots, v_{d+e-1}$. Let $m_1=m(1,T)$. Compute polynomials $u,
w_1, \ldots, w_{d+e-1} \in \F[T]$ of degrees less than $\deg m_1$
that satisfy
$$u \equiv (m_1')^{-1} \bmod m_1,\quad w_j \equiv (m_1')^{-1}\cdot v_j(1,T) \bmod m_1\ (1\le j<d+e).$$
[Then $\bs\lambda,m_1,w_1,\ldots, w_{d+e-1}\in \F[T]$ form a
geometric solution of the zero-dimensional variety
$(W_1\times\{1\})\cap V_1$.]
\item
\label{step:findgh}
Find a root $\eta$ of $m_1$ in some extension field $\K$ of $\F$ and compute
$$
g|h = (w_1(\eta),\ldots, w_{d+e-1}(\eta) ) \in \K^{d+e-1}.
$$
If $(g \circ h)(\bs \alpha_j) = \bs \beta_j$ for $1\leq j < d+e$,
then return $(g,h)$ else return ``fail''.
\end{enumerate}
\end{algorithm}

Details on performing some of the steps are given below.
We start with an illustration of the first steps of the algorithm.
\begin{example}
\label{example2}
Continuing Example \ref{example1}, we have
$d=e=2$, $\bs \alpha =
(5,6,7)$ and $\bs \beta=(3,3,3)$. Now we need this for $\bs \beta=(0,0,0)$
and so have to subtract $3$ from $g_0$. Then the very first solution yields
$(g_0,h_0) = (58, 840, -11).$
Then for $\bs\beta=(1,2,3)$, we have
\begin{align*}
J_{\bs P}(\bs g_0,\bs h_0)&= \left(\begin{array}{cccc}
h_0(\alpha_1) & 1 & g_0'(h_0(\alpha_1))\,\alpha_1\\
h_0(\alpha_2) & 1 & g_0'(h_0(\alpha_2))\,\alpha_2\\
h_0(\alpha_3) & 1 & g_0'(h_0(\alpha_3))\,\alpha_3
\end{array}\right)=
\left(\begin{array}{cccc}
-30 & 1 & -10\\
-30 & 1 & -12\\
-28 & 1 & \ 14
\end{array}\right),\\[1ex]
J_{\bs P}^{-1}(\bs g_0,\bs h_0)&=
\left(\begin{array}{cccc}
-\frac{13}{2} & 6 & \frac{1}{2}\\[0.5ex]
-189 & 175 & 15\\[0.5ex]
\frac{1}{2} & -\frac{1}{2} & 0
\end{array}\right),\\[1ex]
\bs P(\bs g_0,\bs h_0)&=\left(\begin{array}{cccc}
g_0\circ h_0(\alpha_1)-\beta_1 Y\\
g_0\circ h_0(\alpha_2)-\beta_2 Y\\
g_0\circ h_0(\alpha_3)-\beta_3 Y
\end{array}\right)=
\left(\begin{array}{cccc}
-\beta_1 Y\\
-\beta_2 Y\\
-\beta_3 Y
\end{array}\right)=
\left(\begin{array}{cccc}
-1\, Y\\
-2\, Y\\
-3\, Y
\end{array}\right), \\[1ex]
N_{\bs P}(\bs g_0,\bs h_0)&=
\left(\begin{array}{cccc}
58\\
840\\
-11
\end{array}\right)-
\left(\begin{array}{cccc}
-\frac{13}{2} & 6 & \frac{1}{2}\\[0.5ex]
-189 & 175 & 15\\
\frac{1}{2} & -\frac{1}{2} & 0
\end{array}\right)
\left(\begin{array}{cccc}
-1\, Y\\
-2\, Y\\
-3\, Y
\end{array}\right)\\
&= \left(\begin{array}{cccc}
58+7\, Y\\
840+206\, Y\\
-11-\frac{1}{2}\, Y
\end{array}\right)
\equiv
\left(\begin{array}{cccc}
\gamma_1\\
\gamma_0\\
\eta_1
\end{array}\right)
\bmod Y^2.
\end{align*}

Now suppose that the iteration stops here and in the algorithm, we choose $\bs \lambda = (-1,1,2)$.
Then we have
\begin{equation}
 L = -1 \cdot (58+7\, Y)  +
1 \cdot (840+206\, Y)  +2 \cdot (-11- \frac 1 2 Y) = 760 + 198 \, Y \in \F[Y].
\end{equation}
 \hfill \qed
\end{example}

We next provide some subroutines used in Algorithm \ref{mainAlgo} and estimate error probabilities and cost.
For the latter, we only count arithmetic operations in $\F$ and ignore the (small) cost
of Boolean operations and finding random elements.
% find partitions, loop parameters, computing integers like d^d
Furthermore, we use the dense representation of (multivariate)
polynomials and mainly standard procedures for simplicity, except
that we denote by ${\sf M}(n)$ the cost of multiplying two
univariate polynomials of degree at most $n$, and by $\omega$ a
feasible exponent for matrix multiplication, so that two $n \times
n$ matrices can be multiplied with $n^\omega$ operations. Thus we
may use $\omega=2.3728639$ (Le Gall \cite{legall14}), and with the notation
$\softo(f) = \{g \colon g \in f (\log f)^{O(1)}\}$ for real
functions $f$ and $g$, we have ${\sf M} \in \softo(n)$. Possible
algorithmic improvements are discussed in Section \ref{section:cost
algorithm}.

For the analysis of the algorithm, we assume throughout that $\bs \alpha \in \F^{d+e-1}$ is generic, as required.

\begin{lemma}
\label{analysisFirstFourSteps} Steps \eqref{step:initialPoint} and
\eqref{step:NHoperator} can be performed with $\softo(d^{2(d+e-1)})$
operations in $\F$.
\end{lemma}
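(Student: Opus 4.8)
The plan is to bound the two parts of this computation separately. For Step~\eqref{step:initialPoint}, recall from Corollary~\ref{coro:zero_dimensional_fiber_pi} that a point $(\bs g_0, \bs h_0)\in W_0$ is obtained by the linear-algebra recipe of Section~\ref{section:specialBeta}: pick any $d$-refinement of the maximal partition (for instance the partition with blocks $\{1,\ldots,e\}, \{e+1\}, \ldots, \{d+e-1\}$), solve the nonsingular $(e-1)\times(e-1)$ system~\eqref{matrixEquation2} for $\bs h_0$, and then solve a $d\times d$ interpolation system for $\bs g_0$. Setting up the matrix entries $\alpha_i^j-\alpha_{i'}^j$ costs $O((d+e)^2)$ operations, and Gaussian elimination on systems of size $O(d+e)$ costs $O((d+e)^3)$; since $d+e\le 2d^{d+e-1}\le 2N$, this is comfortably within $\softo(d^{2(d+e-1)})$. (One may also use fast linear algebra here, but the naive bound already suffices.)

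For Step~\eqref{step:NHoperator}, I would bound one Newton--Hensel step and then multiply by the number of steps. There are $\lceil\log_2 N\rceil\in O(d+e)$ steps. In each step we evaluate $\bs P(\bs\gamma,\bs\eta,Y)$, which amounts to forming the compositions $(G\circ H)(\alpha_j)$ for $1\le j<d+e$ with $G,H$ having coefficients that are power series truncated mod $Y^N$: this is $O(d+e)$ polynomial compositions of degrees $d$ and $e$, each reducible to $O((d+e)\log(d+e))$ multiplications in $\F[Y]/(Y^N)$, hence $\softo((d+e)^2 N)$ operations. Forming and inverting the $(d+e-1)\times(d+e-1)$ Jacobian matrix $J_{\bs P}$ over $\F[Y]/(Y^N)$ costs $O((d+e)^\omega)$ ring operations, i.e.\ $\softo((d+e)^\omega N)$ operations in $\F$; the matrix-vector product is cheaper. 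Thus one step costs $\softo((d+e)^{\max\{2,\omega\}} N)$, and the whole iteration costs $\softo((d+e)^{1+\max\{2,\omega\}} N)$. Since $(d+e)\le 2d^{d+e-1}$ and $N=2d^{2(d+e-1)}+1$, every factor of $(d+e)$ is absorbed into the $d^{2(d+e-1)}$ up to the soft-$O$ tolerance, giving the claimed $\softo(d^{2(d+e-1)})$.

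The one point that requires a little care---and is the main obstacle to a clean argument---is justifying that the Newton--Hensel iteration is actually \emph{well-defined} over $\F[\![Y]\!]$, i.e.\ that $J_{\bs P}(\bs g_0,\bs h_0,0)$ is invertible so that $J_{\bs P}^{-1}$ exists as a matrix over $\F[\![Y]\!]$, and that quadratic convergence holds. This is exactly the content of Lemma~\ref{lemma:unramifiedalpha0} together with the general assumption that $\char(\F)\nmid d$: the Jacobian $J$ at $(\bs g_0,\bs h_0,\bs\alpha,\bs 0)$ is the matrix~\eqref{Jacobian dQ/dG,H}, which is nonsingular because $\bs\alpha$ is generic (so $R^{(2)}(\bs\alpha)\ne 0$). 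Hence $\det J_{\bs P}(\bs g_0,\bs h_0,0)\ne 0$, the initial approximation lifts uniquely, and standard Newton--Hensel theory (see, e.g., \cite[\S 9.4]{gat14} or any reference on Hensel lifting) gives that $\lceil\log_2 N\rceil$ steps suffice to reach precision $Y^N$. Once this is in place, the cost accounting above is routine, and the lemma follows.
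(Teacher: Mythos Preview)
Your proposal is correct and follows essentially the same approach as the paper: solve the linear systems of Section~\ref{section:specialBeta} for Step~\eqref{step:initialPoint}, then bound one Newton--Hensel step over $\F[Y]/(Y^N)$ by $\softo((d+e)^\omega N)$ and multiply by the number of steps, invoking Lemma~\ref{lemma:unramifiedalpha0} for invertibility of the Jacobian at the starting point. The only minor difference is that the paper tracks the precision $2^k$ at the $k$th Newton step and sums the geometric series $\sum_k {\sf M}(2^k)\in O({\sf M}(N))$, thereby saving a factor of $\log_2 N$ that you simply absorb into the $\softo$; your observation that every power of $(d+e)$ is polylogarithmic in $N=2d^{2(d+e-1)}+1$ is what makes this absorption legitimate.
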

\begin{proof}
In step \eqref{step:initialPoint} we use Proposition
\ref{interpolForSmallSizeBeta} to compute a solution $(\bs g_0, \bs
h_0)$ for the interpolation problem given by $(\bs \alpha, \bs \beta
\cdot 0) = (\bs \alpha, \bs 0)$. This requires a $d$-refinement of
${\mathcal P}_{\bs 0}$, say ${\mathcal P} =\{  \{1\}, \ldots,
\{d-1\}, \{d,\ldots, d+e-1\} \}$. The system \eqref{matrixEquation2}
of linear equations can be solved for $\bs h_0$ with $(e-1)^\omega$
operations, and the interpolation for $\bs g_0$ with $O({\sf M}(d)
\log d)$ operations.

One step of the Newton-Hensel iteration in step
\eqref{step:NHoperator} at most doubles the degree of the $d+e-1$
power series in $Y$. In the first step, given in Example
\ref{example2}, this degree is at most $1$, and thus in the $k$th
step at most $2^k$. Writing $q=d+e-1$ for simplicity, the rows of
the $q \times q$ Jacobian matrix $J_{\bs P}$ are like the top rows
in \eqref{Jacobian dQ/dG,H}. At the start of the $k$th step, each
coefficient $(\bs g|\bs h)_i$ is replaced by a polynomial in $Y$ of
degree at most $2^{k-1}$, say by $(\bs \gamma^{(k-1)}|\bs
\eta^{(k-1)})_i$. The linear combination of the $\eta_i^{(k-1)}$
corresponding to $h(\alpha_j)$ with coefficients $\alpha_j^i$ can be
computed with $O(e2^k)$ operations, and then the powers
$h(\alpha_j)^2, \ldots, h(\alpha_j)^{d-1}$ with $O(de {\sf M}(2^k))$
further steps. All this needs to be done for the $q$ values of $j$,
at a cost of $O(deq {\sf M}(2^k))$ operations. Similarly, a single
$g'(h(\alpha_j))$, given the powers of $h(\alpha_j)$, corresponds to
at most $d$ multiplications of polynomials modulo $Y^{2^k}$ at a
cost of $O(d {\sf M}(2^k))$ operations. The multiplications by
powers of $\alpha_j$ are even cheaper. This leads to $O(dq {\sf
M}(2^k))$ operations.

Thus the matrix $J_{\bs P}(\bs \gamma^{(k-1)},\bs \eta^{(k-1)},Y)\in
\F[Y]^{q \times q}$ can be calculated with $O(de {\sf M}(2^k))$
operations. It is invertible modulo $Y$, hence also modulo
$Y^{2^k}$. Its inverse in $\F[Y]^{q \times q}$ can be found with
$O(q^\omega {\sf M}(2^k))$ operations in $\F$ and the product by
$\bs P(\bs \gamma^{(k-1)},\bs\eta^{(k-1)},S)^{\sf T}$ with $q^2 {\sf
M}(2^k)$ operations.

In total, the $k$th step of the iteration takes $q^\omega {\sf
M}(2^k)$ operations. Since ${\sf M}(n)$ is essentially linear,
$\sum_{1\leq k \leq \kappa} {\sf M}(2^k) \in O({\sf M}(N))$ and the
total cost for step \eqref{step:NHoperator} is $O(q^\omega {\sf
M}(N))$.
\end{proof}

Now $\pi_S$ is unramified at $(\bs g_0,\bs h_0,0) \in V = V_1 \cup
\cdots \cup V_t$ if and only if $\pi$ is unramified at $(\bs g_0,\bs
h_0,\bs\alpha,0\cdot \bs \beta)=(\bs g_0,\bs h_0,\bs\alpha,\bs 0)$,
and the latter holds by Lemma \ref{lemma:unramifiedalpha0}. Hence
$(\bs g_0,\bs h_0,0)$ is contained in exactly one of the components.
Their numbering is arbitrary and we may assume that $(\bs g_0,\bs
h_0,0) \in V_1$.

In order to provide some details for step \eqref{step:findMinPoly},
we first describe an algorithm that works for indeterminates
$\bs\Lambda$ instead of values $\bs\lambda \in \F^{d+e-1}$. Its
intermediate results may become too large and it is not suited for
implementation, but specializations of it are. So we consider a
polynomial ring $R = \F[\bs\Lambda]$ in variables $\bs\Lambda =
(\Lambda_1, \ldots, \Lambda_{d+e-1})$, $\bs\psi \in
(\F[Y]/(Y^N))^{d+e-1}$, and $\bs\Lambda \cdot \bs\psi = \sum_{1 \leq
i < d+e} \Lambda_i \psi_i \in \F[\bs\Lambda][Y]/(Y^{N})$.
\begin{subroutine}
\label{GaussAlgo}
${}^{}$

Input: $\bs\psi \in (\F[Y]/(Y^N))^{d+e-1}$.

Output: Integer $k$ and a polynomial $\mu \in \F(\bs\Lambda)[Y,T]$
with $\deg_T \mu = k$.
\begin{enumerate}
\renewcommand{\theenumi}{S\arabic{enumi}}
\item
\label{step:binaryLoop} Find the smallest value $k$ of $\ell$
for which
\begin{equation}
\label{binarySearchLambda} \sum_{0 \leq i \le \ell} A_i (\bs\Lambda
\cdot \bs\psi)^i
 \equiv 0 \bmod Y^N
\end{equation}
has a solution $A_0, \ldots, A_{\ell}\in \F(\bs\Lambda)[Y]$ with
$\deg_Y A_i < M$ for all $i\leq\ell$ and $A_\ell$ monic,
and compute such a solution $A_0, \ldots, A_{k}$ with $A_{k}$ monic
of minimal degree $M'$. Starting with $\ell = 1, 2, 4, \ldots$,
this binary
search keeps on doubling the values of $\ell$
while calls to the subroutine \eqref{step:Gauss} return ``fail''.
Then it backtracks and homes in on the smallest values $k$ and
similarly finds the smallest $M'$ for which \eqref{step:Gauss}
returns an output.
\item
\label{step:Gauss}%
The equation \eqref{binarySearchLambda} corresponds to a system of
$N$ linear equations for the $\ell M+M'$ coefficients of all $A_i
\in \F(\bs\Lambda)[Y]$ with $\deg_Y A_i<M$ for $i<\ell$ and
$\deg_YA_\ell< M'$. With a suitable matrix $B \in \F(\bs\Lambda)^{N
\times (\ell M'+M)}$, the vector $A \in \F(\bs\Lambda)^{\ell M'+M}$,
and a suitable constant vector $C \in \F(\bs\Lambda)^N$,
%consisting of the coefficients of $ (\bs\Lambda \cdot \bs\psi)^{\ell} \bmod Y^N$,
this corresponds to the system $BA=C$. Compute with a version of
Gaussian elimination matrices $D \in \F(\bs\Lambda)^{N\times (\ell
M+M')}$, $P \in \F^{(\ell M+M') \times (\ell M+ M')}$ and a vector
$E \in \F(\bs\Lambda)^N$ so that $B X = C$ is equivalent to $DP X =
E$ for all $X \in \F^{\ell M+M'}$, $P$ is a permutation
matrix, and $D$ and $E$ have the form:
$$ \begin{array}{|cccc|c|c|c|}
\cline{1-5} \cline{7-7}
 \star & * & * & * & * & & * \\
 & \ddots & \vdots & \vdots & * & & *\\
  & {0} & \ddots & \vdots &  * & & * \\
 &  & &  \star &  * & & *\\
\cline{1-5} \cline{7-7}
 & 0 & & & 0 & & *\\
\cline{1-5} \cline{7-7}
\end{array} \, , $$
where $*$ stands for an arbitrary value and $\star$ for a nonzero
value in $\F(\bs\Lambda)$. If the bottom entries of $E$ (designated
by a single $*$) are all zero, then compute by back-substitution a
solution $Y\in \F(\bs\Lambda)^{\ell M+M'}$ of $DY = E$ with the
value $0$ for those entries of $Y$ that correspond to the lower
block of the matrix, $A = P^{-1} Y \in \F(\bs\Lambda)^{\ell M+M'}$
and return $A$, else return ``fail''.
\item
\label{step:minpolLambda} Return $k$ and  $\mu = \sum_{0 \leq i \leq
k} A_i T^i \in \F(\bs\Lambda)[Y,T]$.
\end{enumerate}
\end{subroutine}

The Gaussian elimination works row by row, choosing a suitable nonzero pivot element $p \in \F(\Lambda)$,
multiplying one row by the pivot and then adding it to another row.
This creates entries with value $0$ in the matrix.

Now step \eqref{step:findMinPoly} of Algorithm \ref{mainAlgo} is
implemented by running Subroutine \ref{GaussAlgo} on the special
inputs $\bs Z^{(j)}$. Gauss elimination performs two types of
computations: calculations in the ground field, here
$\F(\bs\Lambda)$, and tests for zero. Such algorithms are sometimes
called \emph{arithmetic-Boolean circuits}. If $\bs\Lambda$ is
substituted by some special value, say $\bs\lambda$ or $\bs
Z^{(j)}$, then some of the tests that originally return ``$\neq 0$''
might return ``$= 0$''. In \eqref{R6def} below we define a
polynomial $R^{(5)} \in \F[\bs\Lambda]$ such that  $R^{(5)}
(\bs\lambda) \neq 0$ guarantees that this does not happen. We call
some $\bs\lambda \in \F^{d+e-1}$ \emph{lucky} if $R^{(5)}
(\bs\lambda) \neq 0$, and show that the latter occurs with high
probability.

In other words, the condition $R^{(5)} (\bs\lambda) \neq 0$ fixes
the Boolean sequence of test outcomes and turns the
arithmetic-Boolean circuit into an arithmetic circuit (or
straight-line program).

\begin{lemma}
\label{pivotEstimate} For the output of Subroutine \ref{GaussAlgo},
we have $k \leq d^{d+e-1}$. $R^{(5)} \in \F[\bs\Lambda]$ is a
nonzero polynomial of degree at most $32 (d^2+de)^2 d^{5(d+e-1)}$. A
$\bs\lambda$ as chosen in step \eqref{step:chooseLambda} of
Algorithm \ref{mainAlgo} is lucky with probability at least
$1-\epsilon$. For any lucky $\bs\lambda$, the following statements
hold:
\begin{enumerate}
\item
\label{lambdaSeparates} The linear form $\mathcal{L}=\bs\lambda\cdot
(\bs G,\bs H) \in \F[\bs G,\bs H]$ separates the points of $W_0 \cup
W_1$.
\item
\label{specialization} For $1\leq j < d+e$, we have $k_j = k$,
$m^{(j)} = \mu|_{\bs\Lambda \leftarrow \bs Z^{(j)}}$ and $m =
\mu|_{\bs\Lambda \leftarrow \bs\lambda}$
\end{enumerate}
\end{lemma}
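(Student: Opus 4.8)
The plan is to treat the three quantitative assertions ($k\le d^{d+e-1}$, the bound on $\deg R^{(5)}$, the luckiness probability) and the two structural assertions (\ref{lambdaSeparates}) and (\ref{specialization}) in turn, with the bulk of the work going into constructing $R^{(5)}$ so that it simultaneously controls the Gaussian-elimination test outcomes and the separation property. First, for the bound on $k$: the subroutine \ref{GaussAlgo} searches for the minimal-degree-in-$T$ polynomial $\mu\in\F(\bs\Lambda)[Y,T]$ with $\mu(Y,\bs\Lambda\cdot\bs\psi)\equiv0\bmod Y^N$, which by the discussion preceding Definition \ref{def:GeometricSolution} and by Lemma \ref{lemma:ComputationMinimal} (applied with the truncation order $N=2d^{2(d+e-1)}+1$, legitimate since $D\le\delta\le d^{d+e-1}$ by \eqref{DdeltaBound}) recovers the minimal polynomial $m_{\bs\Lambda}$ of the primitive element along the component $V_1$ through $(\bs g_0,\bs h_0,0)$; hence $k=\deg_T\mu=D\le d^{d+e-1}$. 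The initial point lies on a single component by the paragraph following Lemma \ref{analysisFirstFourSteps}, so this is the relevant $D$.

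Next I would construct $R^{(5)}$. It is the product of three pieces. The first is $R^{(4)}$ from Lemma \ref{lemma:RandomChoiceLinearForm}, of degree at most $4D^2\le 4d^{2(d+e-1)}$, whose non-vanishing gives assertion (\ref{lambdaSeparates}) that $\mathcal L=\bs\lambda\cdot(\bs G,\bs H)$ separates $W_0\cup W_1$. The second piece comes from the Gaussian elimination in step \eqref{step:Gauss}: running the elimination with the generic parameter $\bs\Lambda$ produces, at each of the (at most $2M$ many, one per candidate $\ell$ in the binary search) invocations, a sequence of pivot elements $p\in\F(\bs\Lambda)$, each a rational function whose numerator and denominator are polynomials in $\bs\Lambda$; the relevant matrix $B$ has entries that are polynomials in $\bs\Lambda$ of degree $1$ (coming from $\bs Z^{(j)}\cdot\bs\psi$ and its powers, which after reduction mod $\Lambda_j^2$ are affine in $\Lambda_j$), so by the usual Bareiss/subresultant bound each pivot has degree $O(N)$ in $\bs\Lambda$; taking the product of the numerators of all pivots across all invocations gives a nonzero polynomial of degree $O(M\cdot N)$, and squaring loosely to absorb constants and the denominators yields the stated $32(d^2+de)^2d^{5(d+e-1)}$. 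The third (and smallest) piece records the condition $\deg_T m_{\bs\Lambda}(\bs\lambda,X_1,T)=D$ together with $\mathrm{Res}_T(m_{\bs\Lambda},\partial m_{\bs\Lambda}/\partial T)(\bs\lambda,X_1)\neq0$ from the geometric-solution machinery of Section \ref{subsec:GeometricSolutions}; its degree is at most $2\delta D\le 2d^{2(d+e-1)}$. The product of all three is the desired $R^{(5)}$, nonzero because each factor is, and its degree is dominated by the middle piece. The luckiness probability then follows from the Schwartz–Zippel lemma: $\bs\lambda$ is drawn uniformly from $\mathcal S^{d+e-1}$ with $\#\mathcal S\ge 32\epsilon^{-1}(d^2+de)^2d^{5(d+e-1)}\ge\epsilon^{-1}\deg R^{(5)}$, so $\Pr[R^{(5)}(\bs\lambda)=0]\le\deg R^{(5)}/\#\mathcal S\le\epsilon$.

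Finally, assertion (\ref{specialization}): for lucky $\bs\lambda$ the three pieces of $R^{(5)}$ guarantee, respectively, that (i) substituting $\bs\lambda$ for $\bs\Lambda$ in the generic Gaussian elimination does not turn any ``$\neq0$'' pivot test into ``$=0$'', so the Boolean branch taken on input $\bs\lambda$ (and likewise on input $\bs Z^{(j)}=\bs\lambda+\Lambda_j\bs e_j$, which specializes to $\bs\lambda$ at $\Lambda_j=0$) is exactly the generic one, whence $k_j=k$ and $m^{(j)}=\mu|_{\bs\Lambda\leftarrow\bs Z^{(j)}}$ and $m=\mu|_{\bs\Lambda\leftarrow\bs\lambda}$ by commutativity of specialization with the now-purely-arithmetic circuit; and (ii) $\bs\lambda$ indeed induces a primitive element and $\deg_T m_{\bs\lambda}=D$, so the minimal polynomial computed equals $m_{\bs\Lambda}(\bs\lambda,X_1,T)$ as claimed in the bracketed comments of Algorithm \ref{mainAlgo}. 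The main obstacle I expect is the bookkeeping of the Gaussian-elimination degree bound: one must argue that the binary search over $\ell$ invokes \eqref{step:Gauss} at most $O(\log M)$ times for the ascending phase and $O(\log M)$ for the backtracking plus the minimal-$M'$ search, each on a matrix of size $O(N)\times O(N)$ with degree-$O(1)$ entries, and that the accumulated pivot product stays within the crude $32(d^2+de)^2d^{5(d+e-1)}$ bound — this is where care with constants (and with the reduction mod $\Lambda_j^2$ keeping degrees in $\Lambda_j$ equal to $1$) is needed, while everything else is a direct assembly of results already established.
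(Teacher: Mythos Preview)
Your plan matches the paper's approach: bound $k$ via the geometric-solution degree bounds, define $R^{(5)}$ as $R^{(4)}$ times the product of pivot conditions from the generic Gaussian elimination, and apply Schwartz--Zippel. One genuine correction is needed in the pivot-degree bookkeeping you flagged as the main obstacle: in Subroutine~\ref{GaussAlgo} run over $\F(\bs\Lambda)$, the entries of $B$ are the $Y$-coefficients of $(\bs\Lambda\cdot\bs\psi)^i$ for $i\le\ell$ and hence have degree up to $\ell$ in $\bs\Lambda$, not degree~$1$. Your degree-$1$ justification (reduction mod $\Lambda_j^2$) pertains to the specialized run on $\bs Z^{(j)}$ in step~\eqref{step:findMinPoly}, but $R^{(5)}\in\F[\bs\Lambda]$ must be extracted from the \emph{generic} run, since it is the non-vanishing at $\bs\lambda$ of these generic pivots that forces every specialized computation (at $\bs\lambda$ and at each $\bs Z^{(j)}$) to follow the same Boolean path. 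With entry-degree $\ell$, each pivot is (by Edmonds--Bareiss) a ratio of subdeterminants of size at most $N$, hence of degree at most $\ell N$; with at most $N$ pivots per call and $O\big((d+e)\log d\big)$ calls in the binary search over $\ell$ and $M'$, the product has degree $O\big((d+e)^2(\log d)^2\cdot d^{d+e-1}N^2\big)$, which is the source of the $d^{5(d+e-1)}$ factor.

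Your third piece (the primitive-element and resultant condition) is not in the paper's $R^{(5)}$ and is redundant: once all generic pivots survive specialization at $\bs\lambda$, the arithmetic-Boolean circuit becomes purely arithmetic, the binary search halts at the same $k$, and $m=\mu|_{\bs\Lambda\leftarrow\bs\lambda}$ follows by commutation of specialization with the arithmetic steps. The correct $T$-degree of $m_{\bs\lambda}$ is then a consequence, not an additional hypothesis. Including it does no harm to the degree bound, but it obscures the actual mechanism behind assertion~\eqref{specialization}.
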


\begin{proof}
From \cite[Proposition 1]{Schost03} we know that there exists a
solution to \eqref{binarySearch} with $k \leq D \leq d^{d+e-1}$ and
$M'\le \delta\le d^{d+e-1}$.
%There exists a polynomial $p \in \F[S,T]$ %of minimal degree in $T$ and
%with $p(S,\lambda \cdot G|H) =0 \in \F[V_1]$
%and $\deg_T p \leq \delta \leq d^{d+e-1}$,
%Then also $p(\lambda \cdot \gamma|\eta) = 0$.
Therefore $\max\{k,M'\} \leq d^{d+e-1}< M$ in
\eqref{binarySearchLambda}, so that $\max\{\ell,M'\}$ never exceeds
$2M$.

For the probabilistic analysis in (\ref{lambdaSeparates}),
Lemma \ref{lemma:RandomChoiceLinearForm}
provides a nonzero polynomial $R^{(4)} \in \F[\bs\Lambda]$ of total degree at most
$4D^2 \leq 4 d^{2(d+e-1)}$ so that
$R^{(4)} (\bs\lambda)\neq 0$ is sufficient for the linear form $\mathcal{L}=\bs\lambda\cdot
(\bs G,\bs H)$ to separate the points of $W_0 \cup W_1$.

We now present a similar polynomial $R^{(5)}   \in \F[\bs\Lambda]$
for the claim in (\ref{specialization}), namely, such that for any
$\bs \lambda \in \F^{d+e-1}$ with $R^{(5)} (\bs\lambda) \neq 0$, all
pivots and denominators in the subroutine with $\bs\lambda$
substituted for $\bs\Lambda$ are nonzero.
%faithful substitution?
A fortiori, this holds for substitution by any $\bs Z^{(j)}$.

We represent each intermediate result in $\F(\bs \Lambda)$ as the
quotient of two coprime polynomials in $\F[\bs \Lambda]$. The
\emph{degree} of the rational function is the maximum of the two
polynomial degrees. Each calculation step yields first a quotient of
two arbitrary polynomials which we then simplify by dividing out
their $\gcd$. All pivots and divisors in the Gaussian elimination of
step \eqref{step:Gauss} for some value of $\ell$ are quotients of
determinants of two square submatrices
of the original coefficient matrix $B$;
see Edmonds \cite{edm67} and Bareiss \cite{bar68}.
In \eqref{binarySearchLambda}, the (total) degree in $\bs\Lambda$ is
at most $\ell$, so this also holds for all entries in $B$. The size
of square submatrices is at most $N \times N$, and their
determinants have degree at most $\ell N$ in $\bs\Lambda$. We denote
by $R^{(5)}_{\ell,M'} \in \F[\bs\Lambda]$ the product of all pivots
examined and found to be nonzero in the course of the subroutine.
Since there are at most $N$ such pivots, we have $\deg
R^{(5)}_{\ell,M'} \leq \ell N^2 \leq 2d^{d+e-1} N^2$. At most $2
\log_2 2d^{d+e-1} \leq 2(d+e)\log_2 d$ values of $\ell$ and $M'$ are
used. We let
 \begin{equation}
 \label{R6def}
 R^{(5)} =  R^{(4)} \cdot \prod_{\ell,M'} R^{(5)}_{\ell,M'},
 \end{equation}
where the product runs  over all such $\ell$. $ R^{(5)}$  is a
nonzero polynomial of degree
 at most
 \begin{align*}
4 d^{2(d+e-1)} +& 4 (\log_2 d)^2  (d+e)^2 \cdot 2d^{d+e-1} N^2 \\
&  \leq  32  (d^2+de)^2\cdot d^{5(d+e-1)}.
 \end{align*}

It is well known (see for example von zur Gathen \&\ Gerhard {\cite[Lemma 6.44]{GaGe99}}) that for a nonzero polynomial
$p \in \F[Y_1, \ldots, Y_n]$
and a uniformly random choice of $y = (y_1, \ldots, y_n) \in {\mathcal S}^n \subseteq \F^n$,
$$
{\text {prob}} \{p(y) = 0\} \leq \deg p / \#{\mathcal S},
$$
where $\deg$ denotes the total degree.
This implies the second claim.
\end{proof}

In particular, in step \eqref{step:geomSolution} of Algorithm
\ref{mainAlgo}, we have $m=  m^{(j)}|_{\Lambda_j \leftarrow 0}$ for
any $j < d+e$.

\begin{lemma}\label{lemma:ComplexityPowersL}
Step \eqref{step:chooseLambda}
can be performed with $\softo(d^{3(d+e-1)})$ operations in
$\F$ for $1\le j< d+e$.
\end{lemma}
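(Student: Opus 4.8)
The plan is a direct count of field operations, resting on two simplifications: all arithmetic is performed modulo $(Y^N,\Lambda_j^2)$, so each power of $L^{(j)}$ is represented by just two truncated power series; and the ``$\Lambda_j$-free'' part of these powers does not depend on $j$, so it is computed once and shared.

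Recall from step \eqref{step:NHoperator} that $M=d^{d+e-1}+1$ and $N=2d^{2(d+e-1)}+1$, hence $M\in O(d^{d+e-1})$, $N\in O(d^{2(d+e-1)})$, and that $\bs\psi\in(\F[Y]/(Y^N))^{d+e-1}$. First I would compute, once, the single series $L=\bs\lambda\cdot\bs\psi=\sum_{1\le i<d+e}\lambda_i\psi_i\in\F[Y]/(Y^N)$; this costs $O((d+e)N)$ operations in $\F$, being only scalar multiples and sums in $\F[Y]/(Y^N)$. Since $\bs Z^{(j)}=(\lambda_1,\dots,\lambda_{j-1},\lambda_j+\Lambda_j,\lambda_{j+1},\dots,\lambda_{d+e-1})$, we have $L^{(j)}=\bs Z^{(j)}\cdot\bs\psi=L+\Lambda_j\psi_j$, and because everything is taken modulo $\Lambda_j^2$, the binomial expansion gives
\begin{equation*}
(L^{(j)})^k\equiv L^k+k\,\psi_j\,L^{k-1}\,\Lambda_j\pmod{(Y^N,\Lambda_j^2)}\qquad(2\le k\le 2M).
\end{equation*}
Thus producing all required powers reduces to multiplications in $\F[Y]/(Y^N)$.

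Next I would compute $L^2,L^3,\dots,L^{2M}$ iteratively, one multiplication in $\F[Y]/(Y^N)$ at a time: $O(M)$ multiplications, hence $O(M\,{\sf M}(N))$ operations, and these $L^k$ are reused for every $j$. Then, for each of the $d+e-1$ indices $j$, I would form the coefficients $\psi_jL^{k-1}$ for $2\le k\le 2M$ — again $O(M)$ multiplications in $\F[Y]/(Y^N)$ per $j$ — contributing $O((d+e)M\,{\sf M}(N))$ operations in total. Summing, step \eqref{step:chooseLambda} runs in $O((d+e)M\,{\sf M}(N))$ operations in $\F$. Since ${\sf M}(N)\in\softo(N)$, this is $\softo\big((d+e)\,d^{d+e-1}\cdot d^{2(d+e-1)}\big)=\softo\big((d+e)\,d^{3(d+e-1)}\big)$; and as $d,e\ge 2$ gives $d+e\le 3(d+e-1)\log_2 d=\log_2\big(d^{3(d+e-1)}\big)$, the factor $d+e$ is polylogarithmic in $d^{3(d+e-1)}$ and is absorbed, leaving the asserted bound $\softo(d^{3(d+e-1)})$. (The same estimate holds per index $j$, since even then the shared powers $L^k$ would be computed only once.)

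The one place that needs attention — what I would flag as the main obstacle — is the bookkeeping of the two truncations at once: keeping every intermediate quantity reduced modulo $Y^N$ \emph{and} modulo $\Lambda_j^2$ so that a power of $L^{(j)}$ is stored as a pair of elements of $\F[Y]/(Y^N)$ and each update is a single $\F[Y]/(Y^N)$-multiplication, together with the observation that the $\Lambda_j$-free parts $L^k$ must be shared across the $d+e-1$ values of $j$ rather than recomputed. Granting that organization, the rest is a routine tally of polynomial multiplications.
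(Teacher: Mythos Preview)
Your proof is correct and follows essentially the same approach as the paper: a direct count of polynomial multiplications modulo $(Y^N,\Lambda_j^2)$, arriving at $O((d+e)M\,{\sf M}(N))\subset\softo(d^{3(d+e-1)})$ operations. The paper's own argument is terser---it simply says one performs $(d+e)M$ multiplications of polynomials of degree at most~$1$ in $\Lambda_j$ and at most~$N$ in $Y$, each costing $\softo(N)$---without spelling out the binomial identity $(L+\Lambda_j\psi_j)^k\equiv L^k+k\psi_jL^{k-1}\Lambda_j$ or the sharing of the $L^k$ across~$j$; your refinements are sound but do not change the asymptotic bound.
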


\begin{proof}
We perform $(d+e) M$ multiplications of univariate polynomials in
$\F[\Lambda_j][Y]$, each with degree at most $1$ in $\Lambda_j$ and
at most $N$ in $Y$. A single such multiplication can be done with
$\softo(N)$ operations, for a total cost of $\softo(d^{3(d+e-1)})$
operations in $\F$.
\end{proof}

\begin{lemma}\label{lemma:ComplexityFindMinPoly}
For a lucky choice of $\bs\lambda$, step \eqref{step:findMinPoly}
can be performed with $\softo(d^{6(d+e-1)})$ operations in
$\F$, for $1\le j<d+e$.
\end{lemma}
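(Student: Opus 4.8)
The plan is to analyze step~\eqref{step:findMinPoly} of Algorithm~\ref{mainAlgo} by unwinding the binary search described there, which for each $j$ runs Subroutine~\ref{GaussAlgo} (specialized at $\bs Z^{(j)}$) on successive trial values $\ell = 1,2,4,\ldots$, and for each $\ell$ solves the linear system~\eqref{binarySearch} by Gaussian elimination over $\F[\Lambda_j]/(\Lambda_j^2)$. First I would fix the relevant size parameters: by Lemma~\ref{pivotEstimate} we have $k_j = k \leq d^{d+e-1}$ and $M_j \leq 2M = 2(d^{d+e-1}+1)$, so the binary search examines at most $O(\log d^{d+e-1}) = O((d+e)\log d)$ values of $\ell$, each bounded by $2M \in O(d^{d+e-1})$, and likewise $O((d+e)\log d)$ values of $M'$. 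For each such $(\ell, M')$ the system~\eqref{binarySearch} has $N = 2d^{2(d+e-1)}+1$ equations in $O(\ell M + M') = O(d^{2(d+e-1)})$ unknowns, the unknowns being the coefficients (in $\F[\Lambda_j]/(\Lambda_j^2)$) of the $A_i$'s. Hence the coefficient matrix $B$ has size $O(N) \times O(N)$ with $N \in O(d^{2(d+e-1)})$.

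Next I would account for the arithmetic cost of one Gaussian elimination on such a matrix. Because $\bs\lambda$ is lucky (so $R^{(5)}(\bs\lambda) \neq 0$ and, a fortiori, the same holds after substituting any $\bs Z^{(j)}$), all pivots that the generic run of the subroutine finds nonzero remain nonzero after the specialization, so the Boolean branching is fixed and the procedure is a genuine arithmetic circuit over the ring $\F[\Lambda_j]/(\Lambda_j^2)$; each arithmetic operation in that ring costs $O(1)$ operations in $\F$. A straightforward dense Gaussian elimination on an $O(N) \times O(N)$ matrix uses $O(N^3)$ ring operations, which is $O(d^{6(d+e-1)})$ operations in $\F$; setting up the matrix $B$ and the right-hand side $C$ from the already-computed powers $(L^{(j)})^i$ costs no more than this. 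Summing over the $O((d+e)\log d)$ choices of $\ell$ and $M'$ in the binary search contributes only a polylogarithmic factor, and summing over $1 \le j < d+e$ contributes a further polynomial factor, so the whole of step~\eqref{step:findMinPoly} for all $j$ stays within $\softo(d^{6(d+e-1)})$; restricting to a single $j$, as the statement asks, it is the same bound.

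The main obstacle, and the place where I would be most careful, is justifying that the cost really is governed by dense Gaussian elimination over the length-two truncated polynomial ring and not by something larger: one must check that the intermediate entries of $D$ and $E$, which by the Edmonds--Bareiss description are quotients of two minors of $B$, do not need to be carried as rational functions of unbounded degree in $\Lambda_j$ during the computation. This is exactly why the algorithm works modulo $\Lambda_j^2$: every element of $\F[\Lambda_j]/(\Lambda_j^2)$ is represented by just two coefficients in $\F$, and by luckiness every pivot is a unit in that ring (its constant term being the nonzero specialization of the corresponding generic pivot), so division is a single $O(1)$-cost operation with no size growth. Once this is in place, the counting above is routine, and I would present it in that order: (i) size of the binary search and of each linear system from Lemma~\ref{pivotEstimate}; (ii) luckiness makes it an arithmetic circuit over $\F[\Lambda_j]/(\Lambda_j^2)$ with $O(1)$-cost ring operations; (iii) $O(N^3)$ ring operations per Gaussian elimination with $N \in O(d^{2(d+e-1)})$; (iv) multiply by the $\softo(1)$ number of search steps and collect the bound.
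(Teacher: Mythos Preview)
Your proposal is correct and follows essentially the same approach as the paper: bound $\ell$ and $M'$ via Lemma~\ref{pivotEstimate}, observe that the resulting linear system has dimensions $O(d^{2(d+e-1)}) \times O(d^{2(d+e-1)})$, use luckiness to justify that the Gaussian elimination runs as an arithmetic circuit over $\F[\Lambda_j]/(\Lambda_j^2)$ with $O(1)$-cost ring operations, apply the cubic bound $O(N^3)$ for dense Gaussian elimination, and absorb the $O((d+e)\log d)$ search steps into the $\softo$. Your added paragraph explaining why working modulo $\Lambda_j^2$ prevents coefficient growth (pivots being units in the truncated ring by luckiness) is a helpful elaboration of a point the paper treats more tersely.
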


\begin{proof}
If \eqref{binarySearch} has a solution for some values $\ell$ and
$M'$, then it also has one for $\ell+1$ and $M'+1$ and all larger
values of $\ell$ and $M'$. Therefore, by Lemma \ref{pivotEstimate},
in the binary search in step \eqref{step:findMinPoly} all values
$\ell$ and $M'$ are at most $2 d^{d+e-1}$.

We first consider \eqref{binarySearch} for an arbitrary value of
$\ell$ and $M'$ and write each unknown $a^{(j)}_i \in
\F[\Lambda_j][Y]$ as $a^{(j)}_i = \sum_{0 \leq m < M} a^{(j)}_{im}
Y^m$ for $i<\ell$ and $a^{(j)}_\ell = \sum_{0 \leq m < M'}
a^{(j)}_{im} Y^m$, and $(L^{(j)})^i = \sum_{0 \leq h < N}
\ell^{(j)}_{ih} Y^h \bmod (Y^N,\Lambda_j^2)$, with all
$\ell^{(j)}_{ih} \in \F[\Lambda_j]$ polynomials of degree at most 1
and unknowns $a^{(j)}_{im}$ taking values in
$\F[\Lambda_j]/(\Lambda_j^2)$. Thus \eqref{binarySearch} corresponds
to a system of $N$ linear equations in $\ell M+M'$ unknowns over
$\F[\Lambda_j]/(\Lambda_j^2)$.

We use the calculation of Subroutine \ref{GaussAlgo} with input $\bs
Z^{(j)}$ instead of $\bs\Lambda$.
By the assumption that the choice of $\bs\lambda$ is lucky, all rational
functions of $\F(\Lambda_j)$ arising during the execution of
Subroutine \ref{GaussAlgo} on input $\bs Z^{(j)}$ are well-defined
modulo $\Lambda_j$, which implies that they are well-defined modulo
$\Lambda_j^2$. We conclude that the output of step
\eqref{step:findMinPoly} is of the form
\begin{equation}\label{eq:expression m(j)moduloLambda2}
m^{(j)}|_{\Lambda_j \leftarrow 0}+ (\partial
m^{(j)}/\partial\Lambda_j)|_{\Lambda_j \leftarrow 0}\cdot
\Lambda_j.\end{equation}

The system of linear equations is of size $(\ell M +M')\times N$ and
$\ell M+M', N \leq 3 d^{2(d+e-1)}$. Solving it can be done with
$27d^{6(d+e-1)}$ calculations on matrix entries,
which are elements of the quotient ring $\F[\Lambda_j]/(\Lambda_j^2)$.
More precisely, each operation involves the images modulo
$\Lambda_j^2$ of two rational functions of $\F(\Lambda_j)$ (which
are well-defined modulo $\Lambda_j^2$), and is performed modulo
$\Lambda_j^2$.
Since each operation in $\F(\Lambda_j)$ is replaced by $O(1)$
operations in $\F$, we conclude that the whole procedure requires
$O(d^{6(d+e-1)})$ operations in $\F$. At most $2 \log_2 (2
d^{d+e-1})$ values of $\ell$ and $M'$ occur and the claimed bound on
the running time follows.
\end{proof}

\begin{lemma}
For a lucky choice of $\bs\lambda$, the values computed in step
\eqref{step:geomSolution} constitute a geometric solution of $V_1$.
The computation can be performed with $\softo(d^{2(d+e-1)})$
operations in $\F$.
\end{lemma}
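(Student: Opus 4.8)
The plan is to verify, item by item, that the tuple $(\bs\lambda,m,v_1,\dots,v_{d+e-1})$ returned in step \eqref{step:geomSolution} satisfies Definition \ref{def:GeometricSolution} for the irreducible curve $V_1$, and then to count the arithmetic operations used. Recall first the geometric picture: as noted just before the statement, $\pi_S$ is unramified at $\bs v=(\bs g_0,\bs h_0,0)$, hence $\bs v$ is a smooth point of $V$ lying on the single component $V_1$, $S$ is a local parameter of $V_1$ at $\bs v$, and the isomorphism $\tau\colon\mathcal O_{\bs v}\cong\F[\![Y]\!]$, $\tau(S)=Y$, identifies the branch $\bs\gamma|\bs\eta=\tau(\bs G|\bs H)$, the unique power-series solution of $\bs P(\bs G,\bs H,Y)=0$ with value $(\bs g_0,\bs h_0)$ at $Y=0$; step \eqref{step:NHoperator} produces $\bs\psi\equiv\bs\gamma|\bs\eta\bmod Y^N$ with $N=2d^{2(d+e-1)}+1$. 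Since $\F(S)\hookrightarrow\F(V_1)$ is separable (Corollary \ref{ViSeparable}(1)) of degree $D_1\le D\le\delta\le d^{d+e-1}$ (by \eqref{DdeltaBound}), we have $N>2D_1\delta$, and $\bs\Lambda\cdot\bs\psi$ is the truncation to that order of the branch expansion of the generic primitive element $\widehat{\mathcal L}_{\bs\Lambda}$ of $\F(\bs\Lambda)(S)\hookrightarrow\F(\bs\Lambda)\otimes_\F\F(V_1)$.

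By the minimality characterization of Lemma \ref{lemma:ComputationMinimal}, now applied over the base field $\F(\bs\Lambda)$ (equivalently \cite[Proposition 1]{Schost03}), the polynomial $\mu$ returned by Subroutine \ref{GaussAlgo} is, up to a nonzero scalar, the Chow polynomial $m_{\bs\Lambda}$ of $V_1$ of \eqref{eq: polinomio de Chow}, with $\deg_T\mu=D_1$, $\deg_Y\mu\le\delta$, $\deg_{\bs\Lambda}\mu\le\delta$. For a lucky $\bs\lambda$, Lemma \ref{pivotEstimate} then gives $k_j=k=\deg_T\mu=D_1$, $m=\mu|_{\bs\Lambda\leftarrow\bs\lambda}$ and $m^{(j)}=\mu|_{\bs\Lambda\leftarrow\bs Z^{(j)}}$, while the pivot factors gathered into $R^{(5)}$ also force $\deg_Tm=D_1$ and $\mathrm{Res}_T(m_{\bs\Lambda},\partial m_{\bs\Lambda}/\partial T)(\bs\lambda,Y)\ne0$. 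From $m(\xi_1,\ell)=0$ (specialize \eqref{eq: polinomio de Chow}) and $\deg_Tm=D_1=[\F(V_1):\F(S)]$ one concludes that $\ell=\mathcal L|_{V_1}$ is a primitive element of $\F(S)\hookrightarrow\F(V_1)$ with minimal polynomial $m$ — the first two items of Definition \ref{def:GeometricSolution}. For the third, differentiate \eqref{eq: polinomio de Chow} with respect to $\Lambda_j$ as in \eqref{eq: primera derivada de Chow} and specialize $\bs\Lambda\leftarrow\bs Z^{(j)}$ and then $\Lambda_j\leftarrow 0$: this identifies $w^{(j)}|_{\Lambda_j\leftarrow 0}=-\partial m^{(j)}/\partial\Lambda_j|_{\Lambda_j\leftarrow 0}$ with the derivative polynomial of \eqref{eq:definitionParametrizChow} attached to the coordinate $(\bs G|\bs H)_j$, so that, after the correction by $Y\,\partial m/\partial T$ that records the parametrization of the free coordinate $S$, the $v_j$ satisfy the identities \eqref{eq:parametrizacionesChow} on the dense branch, hence in $\F[V_1]$; this yields $(\partial m/\partial T)(X_1,\mathcal L)\,X_j-v_j(X_1,\mathcal L)\in\mathcal I(V_1)$ together with $\deg_Tv_j<D$ and $\deg_Yv_j\le\delta+1$. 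Thus $(\bs\lambda,m,v_1,\dots,v_{d+e-1})$ is a geometric solution of $V_1$.

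For the cost, step \eqref{step:geomSolution} merely reads off the $\Lambda_j^1$- and $\Lambda_j^0$-parts of the already-computed $m^{(j)}$, forms the formal derivative $\partial m/\partial T$, multiplies it by $Y$, and adds; each of the $d+e-1$ polynomials in play has $Y$- and $T$-degree at most $d^{d+e-1}$, hence $O(d^{2(d+e-1)})$ coefficients in $\F$ (only a constant factor more over $\F[\Lambda_j]/(\Lambda_j^2)$), and every operation is a coefficient copy, addition, shift, or formal derivative — all linear in the number of coefficients. The total is $O\big((d+e)\,d^{2(d+e-1)}\big)=\softo(d^{2(d+e-1)})$, because $d+e=O(\log(d^{2(d+e-1)}))$.

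The step I expect to be the real obstacle is the identification in the second paragraph: showing that the truncated linear-algebra computation carried out modulo $(Y^N,\Lambda_j^2)$ and then specialized at $\bs Z^{(j)}$ genuinely reproduces the abstract Chow polynomial $m_{\bs\Lambda}$ of $V_1$ and its $\Lambda_j$-derivative — i.e.\ that the chosen precision $N$ and the luckiness of $\bs\lambda$ are together enough to pin down $m^{(j)}=\mu|_{\bs\Lambda\leftarrow\bs Z^{(j)}}$ and hence the ideal-membership conclusion required by Definition \ref{def:GeometricSolution}.
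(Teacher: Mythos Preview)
Your proposal is correct and follows the same route as the paper's proof, which is considerably terser: the paper simply cites the identities \eqref{eq:definitionParametrizChow} and \eqref{eq:parametrizacionesChow} for the correctness claim and counts the $\ell M+M'\le 3d^{2(d+e-1)}$ coefficients of each $m^{(j)}$ in $\F[\Lambda_j]/(\Lambda_j^2)$ (via \eqref{eq:expression m(j)moduloLambda2}) for the cost. You have unpacked that citation in detail---invoking Lemma \ref{lemma:ComputationMinimal} over $\F(\bs\Lambda)$ and Lemma \ref{pivotEstimate} to identify $m^{(j)}$ with the specialized Chow polynomial of $V_1$---which is precisely what the paper's appeal to \eqref{eq:definitionParametrizChow}--\eqref{eq:parametrizacionesChow} is implicitly relying on.
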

\begin{proof}
For $1\le j<d+e$, the polynomial $m^{(j)}$ has $\ell M+M'\leq 3
d^{2(d+e-1)}$ coefficients in $\F[\Lambda_j]/(\Lambda_j^2)$. By the
expression \eqref{eq:expression m(j)moduloLambda2} of $m^{(j)}$ the
claim on the number of operations in $\F$ follows. The fact that the
output of step \eqref{step:geomSolution} is a geometric solution of
$V_1$ is due to \eqref{eq:definitionParametrizChow} and
\eqref{eq:parametrizacionesChow}.
\end{proof}

\begin{lemma}
For a lucky choice of $\bs\lambda$, the output of step
\eqref{step:subsS=1} is a geometric solution of the zero-dimensional
variety $(W_1\times\{1\})\cap V_1$. It can be performed with
$\softo(d^{3(d+e-1)})$ operations in $\F$.
\end{lemma}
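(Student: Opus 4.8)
The plan is to treat correctness and running time separately, assuming throughout (as granted by the preceding lemma) that $\bs\lambda$ is lucky, so that $\bs\lambda,m,v_1,\dots,v_{d+e-1}$ is a geometric solution of the curve $V_1\subset\cF^{d+e}$ with respect to the coordinate $S$; write $\mathcal L=\bs\lambda\cdot(\bs G,\bs H)$ and $D_1=\deg(\pi_S|_{V_1})=\deg_T m$. The target variety is the fiber $(\pi_S|_{V_1})^{-1}(1)=(W_1\times\{1\})\cap V_1$, and the strategy is the standard one for specializing a one-dimensional geometric solution: first verify that $S=1$ is neither a branch value of $\pi_S|_{V_1}$ nor a value where the parametrization degenerates, then evaluate the geometric solution at $S=1$ and invoke the machinery of Section \ref{subsec:GeometricSolutions}.

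The crux is to control the fiber over $S=1$. Since $\bs\alpha$ is generic, Lemma \ref{lemma:FiberUnramified} gives $\#\pi^{-1}(\bs\alpha,\bs\beta)=\deg\pi=D$ with $\pi$ unramified there; because $\pi_S^{-1}(1)=W_1\times\{1\}=\pi^{-1}(\bs\alpha,\bs\beta)\times\{1\}$ and, through the Jacobian $J$, unramifiedness of $\pi$ at a fiber point is the same condition as unramifiedness of $\pi_S$ (as in the proof of Lemma \ref{lemma:V_asSaturation}), the fiber $\pi_S^{-1}(1)$ has exactly $D$ points and $\pi_S$ is unramified at each. No component $V_i$ with $i>t$ can meet this fiber: such a $V_i$ is contracted by $\pi_S$ to a single value, and were that value $1$ the fiber would acquire a positive-dimensional piece, contradicting finiteness. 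Hence $\pi_S^{-1}(1)=\bigcup_{1\le i\le t}(\pi_S|_{V_i})^{-1}(1)$, and combining $\sum_{1\le i\le t}D_i=D$ with $\#(\pi_S|_{V_i})^{-1}(1)\le D_i$ forces $\#(\pi_S|_{V_1})^{-1}(1)=D_1$, all $D_1$ points unramified.

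From this I would deduce that $m_1:=m(1,T)$ is squarefree of $T$-degree $D_1$. For lucky $\bs\lambda$ the form $\mathcal L$ separates $W_1$ (Lemma \ref{pivotEstimate}), so the $D_1$ fiber points have $D_1$ pairwise distinct values $\mathcal L(\bs p)$, each a root of $m_1$ because $m(S,\mathcal L)\in\mathcal I(V_1)$; since $\deg_T m_1\le D_1$, this forces $\deg_T m_1=D_1$ with all roots simple, provided $m_1\ne 0$. But $m_1=0$ would mean $S-1\mid m$, which the minimal-degree normalization in Subroutine \ref{GaussAlgo} excludes: $Y-1$ is a unit in $\F[\![Y]\!]$, so $m/(S-1)$ would solve the defining congruence \eqref{eq:EquivRelationMinimal} with a leading $T$-coefficient of strictly smaller $Y$-degree than that of $m$. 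Squarefreeness makes $m_1'=(\partial m/\partial T)(1,T)$ invertible modulo $m_1$, so $u$ and the $w_j$ of step \eqref{step:subsS=1} are well defined and the fiber avoids the degeneracy locus $\{(\partial m/\partial T)(S,\mathcal L)=0\}$. Specializing the identities $(\partial m/\partial T)(S,\mathcal L)\,X_j-v_j(S,\mathcal L)\in\mathcal I(V_1)$ at a fiber point $\bs p$, with $\eta:=\mathcal L(\bs p)$, gives $(\bs p)_j=v_j(1,\eta)/m_1'(\eta)=w_j(\eta)$ while $m_1(\eta)=0$; and since $\deg m_1=D_1$ equals the number of fiber points and $\mathcal L$ separates them, $\eta\mapsto\bs p$ is a bijection from the roots of $m_1$ onto $(W_1\times\{1\})\cap V_1$. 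Thus $\bs\lambda,m_1,w_1,\dots,w_{d+e-1}$ is a geometric solution of this zero-dimensional variety.

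For the running time, $\deg_Y m,\deg_Y v_j\le\delta\le d^{d+e-1}$ and $\deg_T m,\deg_T v_j\le D\le d^{d+e-1}$ by \eqref{DdeltaBound}. Substituting $Y=1$ into $m$ and the $d+e-1$ polynomials $v_j$ costs $O((d+e)\,\delta D)$ additions in $\F$, which lies in $\softo(d^{3(d+e-1)})$ since $d+e\le 2^{d+e-1}\le d^{d+e-1}$. Forming $m_1'$, computing $u\equiv(m_1')^{-1}\bmod m_1$ by a fast extended Euclidean computation, and computing the $d+e-1$ products $u\cdot v_j(1,T)\bmod m_1$ cost $\softo(d^{d+e-1})$ per polynomial of degree at most $D$, i.e.\ $\softo(d^{2(d+e-1)})$ in total, so the whole step runs in $\softo(d^{3(d+e-1)})$ operations in $\F$. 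The main obstacle is the geometric input of the second and third paragraphs---proving that the fiber over $S=1$ distributes among $V_1,\dots,V_t$ exactly according to their degrees and is unramified, which is precisely what legitimizes the naive substitution $S=1$; the remainder is bookkeeping and the general properties of geometric solutions from Section \ref{subsec:GeometricSolutions}.
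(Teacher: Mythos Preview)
Your proof is correct and follows the same line as the paper's: specialize the geometric solution of $V_1$ at $S=1$, check that $m_1$ has exactly $D_1$ simple roots so that $m_1'$ is invertible modulo $m_1$, read off the parametrizations $w_j$, and bound the cost of the substitution and the modular arithmetic. You are in fact more careful than the paper on two points it merely asserts: the paper writes ``Observe that $\#((W_1\times\{1\})\cap V_1)=D_1$'' and ``$m_1\ne 0$'' without argument, whereas you supply the counting argument distributing the $D$ unramified fiber points among $V_1,\dots,V_t$ and the minimality argument ruling out $(S-1)\mid m$.
One small remark on the cost: the polynomials $m,v_j$ produced by the algorithm are stored with $\deg_Y<M=d^{d+e-1}+1$ rather than the geometric bound $\delta$, so the substitution cost is a priori $O((d+e)\cdot M\cdot k)$ rather than $O((d+e)\,\delta D)$; since $k\le D\le d^{d+e-1}$ and $M\le d^{d+e-1}+1$, this still lands in $\softo(d^{3(d+e-1)})$, so your conclusion stands.
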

\begin{proof}
Recall that $(\pi_S|_{V_1})^{-1}(1)=(W_1\times\{1\})\cap V_1$.
Observe that $\#((W_1\times\{1\})\cap V_1)=\deg \pi_S|_{V_1}=D_1>0$
and all the elements of $\pi_S((W_1\times\{1\})\cap V_1)$ are
interpolants for the interpolation problem defined by
$(\bs\alpha,\bs\beta)$.
Since $\bs\lambda,m,v_1,\ldots,v_{d+e-1}$ form a geometric solution
of $V_1$, for $m_1=m(1,T)$ we have
\begin{align*}
(W_1\times\{1\})\cap V_1 = \{&(1,\bs g,\bs h) \in \F^{d+e-1} \colon
\
m_1(\mathcal{L}(\bs g,\bs h))=0,\\
&\ m_1'(\mathcal{L}(\bs g,\bs h)) (g|h)_j=v_{j}(1,\mathcal{L}(\bs g,\bs
h))\ (1\le j <d+e)\}.
\end{align*}
Observe that $\deg m_1\le D_1$, $m_1\not=0$ and $m_1$ vanishes on
the set $\{\mathcal{L}(\bs g,\bs h) \colon (\bs g,\bs h)\in W_1\}$,
of cardinality $D_1$. It follows that $\deg m_1=D_1$, $m_1$ and
$m_1'$ are coprime, and thus the inverse $(m_1')^{-1}$ modulo $m_1$
is well-defined. In particular, defining $w_j=(m_1')^{-1}\cdot v_j$
modulo $m_1$ for $1\le j<d+e$, we obtain
$$
(W_1\times\{1\})\cap V_1=\{(1,w_1(\eta),\ldots,w_{d+e-1}(\eta))\in
\F^{d+e} \colon \ \eta\in\cF,\ m_1(\eta)=0\}.
$$
This shows that $\bs\lambda,m_1,w_1,\ldots,w_{d+e-1}$ form a
geometric solution of $W_1$.

The dense representation of $m(1,T),v_1(1,T),\ldots,v_{d+e-1}(1,T)$
can be obtained from that of $m(Y,T),v_1(Y,T),\ldots,v_{d+e-1}(Y,T)$
with $O((d+e)MN)\subseteq\softo(d^{3(d+e-1)})$ operations in $\F$.
The remaining computations consist of a modular inversion and
$d+e-1$ modular multiplications of univariate polynomials of degree
at most $M$, which contribute with $\softo(d^{d+e-1})$ additional
operations.
\end{proof}

Step (\ref{step:findgh}) of Algorithm \ref{mainAlgo} has to find a
root of $m_1\in\F[T]$, with $u = \deg m_1 \leq d^{d+e-1}$. A natural
implementation is to find an irreducible factor $m_2$ of $m_1$ and
use the root $T \bmod m_2 \in \K = \F[T]/(m_2)$. While the cost of
all other steps can be analyzed for general $\F$, this is not clear
here. Over a finite field $\F_q$, the factorization can be done
probabilistically with $\softo(u^{1.5} \log q + u \log^2 q)$ steps
(Kedlaya \&\ Umans \cite{keduma11}). For $\F=\Q$, efficient factorization algorithms
are known, but their cost estimates require a bound on the size of
the coefficients of $m_1$. Such a bound can be derived along the
computations of Algorithm \ref{mainAlgo}, but we have not done so.
Thus in the following we use some ``cost of root finding at degree
$u$'' for which we have no good estimate in general.

\begin{lemma}
\label{cost: findgh} Step (\ref{step:findgh}) of Algorithm
\ref{mainAlgo} can be performed with $\softo(d^{2(d+e-1)})$
operations in $\F$ plus the cost of root finding at degree not more
than $d^{d+e-1}$.
\end{lemma}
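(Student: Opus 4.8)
The plan is to account separately for the two ingredients of step~(\ref{step:findgh}): the substitution/arithmetic part, which is over~$\F$, and the univariate root finding, which we isolate as the ``cost of root finding at degree $u$'' as discussed in the preceding paragraph. First I would recall the sizes of the objects entering the step. By Lemma~\ref{pivotEstimate} and the bounds \eqref{DdeltaBound}, the polynomial $m_1 = m(1,T)\in\F[T]$ has degree $u = D_1 \leq D \leq d^{d+e-1}$, and the parametrizing polynomials $v_1(1,T),\ldots,v_{d+e-1}(1,T)$ have degree less than $\deg m_1 \leq d^{d+e-1}$ in $T$ as well; this is guaranteed by the geometric-solution bounds $\deg_T v_j < D$ recorded in Definition~\ref{def:GeometricSolution} after the substitution $Y\leftarrow 1$.

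Next I would itemize the operations performed in step~(\ref{step:findgh}) beyond the root-finding black box. After the root $\eta$ of $m_1$ has been produced in an extension $\K$ of $\F$ (with $[\K:\F]\le u \le d^{d+e-1}$), the remaining work is: evaluating the $d+e-1$ polynomials $w_1,\ldots,w_{d+e-1}\in\F[T]$, each of degree $<u$, at $\eta\in\K$; forming the composition $g\circ h$ from the resulting coefficient vector $g|h\in\K^{d+e-1}$; and evaluating this composition at the $d+e-1$ points $\alpha_1,\ldots,\alpha_{d+e-1}$. Each evaluation $w_j(\eta)$ costs $O(u)$ operations in $\K$, and an operation in $\K$ costs $\softo(u)$ operations in $\F$ (arithmetic modulo $m_2$, with $\deg m_2\le u$), so all $d+e-1$ evaluations together cost $O((d+e)\,u)$ operations in $\K$, i.e.\ $\softo((d+e)\,u^2)\subseteq\softo(d^{2(d+e-1)})$ operations in $\F$. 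Computing $g\circ h$, where $\deg g = d$ and $\deg h = e$, can be done with $O(\mathsf{M}(de)\log d) = \softo(n)$ operations in $\K$, hence $\softo(n\,u)\subseteq\softo(d^{2(d+e-1)})$ operations in $\F$; and evaluating the degree-$n$ polynomial $g\circ h$ at the $d+e-1$ points $\alpha_i$ costs $\softo(n)$ operations in $\K$ by fast multipoint evaluation, again $\softo(d^{2(d+e-1)})$ operations in $\F$. Summing these contributions gives the claimed $\softo(d^{2(d+e-1)})$ operations in $\F$, plus the single invocation of root finding on $m_1$, which has degree at most $d^{d+e-1}$.

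I do not expect a genuine obstacle here: the bounds are loose and every subtask reduces to standard fast univariate arithmetic over $\F$ or over the extension $\K$, with the one genuinely expensive subtask (factoring/root-finding on $m_1$) explicitly excluded from the $\F$-operation count by the statement, exactly as the paragraph before the lemma stipulates. The only point requiring a word of care is that arithmetic in $\K=\F[T]/(m_2)$ is charged at $\softo(\deg m_2)\subseteq\softo(d^{d+e-1})$ operations in $\F$ per $\K$-operation, so each of the three subtasks above contributes a factor $u\le d^{d+e-1}$ on top of its $\softo(n)$ or $\softo(u)$ count in $\K$; all three therefore land in $\softo(d^{2(d+e-1)})$, and nothing exceeds that bound. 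This completes the plan.
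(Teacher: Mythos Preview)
Your proposal is correct and follows essentially the same approach as the paper: bound $[\K:\F]\le d^{d+e-1}$, charge each $\K$-operation at $\softo(d^{d+e-1})$ operations in $\F$, and separately isolate the root-finding call on $m_1$ of degree at most $d^{d+e-1}$. One minor remark: the paper is slightly sharper on the evaluation of the $w_j$ at $\eta$, observing that since $w_j\in\F[T]$ and $\eta=T\bmod m_2$, computing $w_j(\eta)$ is just a single polynomial reduction over $\F$ (cost $\softo(d^{d+e-1})$ in $\F$, not $\softo(d^{2(d+e-1)})$); your Horner-in-$\K$ estimate is an overcount, but harmless since the verification step dominates at $\softo(d^{2(d+e-1)})$ anyway.
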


\begin{proof}
For any root $\eta$ of $m_1\in\F[T]$, $\K = \F[\eta]$ has degree at
most $d^{d+e-1}$ over $\F$. For any $j<d-e$, the same bound holds
for the degree of $w_j \in \F[T]$, so that all $w_j(\eta) \in\K$ can
be computed with $\softo(d^{d+e-1})$ operations in $\F$. Then
$\softo(d^{2(d+e-1)})$ operations in $\F$ are sufficient to test
whether $(g \circ h)(\bs \alpha_j) = \bs \beta_j$ for $1\leq j <
d+e$.
\end{proof}

\begin{theorem}
\label{thm:final} Let $d, e \geq 2$,  $0 < \epsilon <1$, $\F$ be a
field of characteristic not dividing $d$ and with at least
$32\epsilon^{-1}(d^2+de)^2{d^{5(d+e-1)}}$ elements, $\bs \alpha\in
\F^{d+e-1}$ generic, and $\bs \beta \in\F^{d+e-1}$. Then Algorithm
\ref{mainAlgo} returns a solution $(g,h)$ to Problem
\ref{problem:interpolation}, so that $(g \circ h)(\bs \alpha_j) =
\bs \beta_j$ for $1\leq j < d+e$, with probability at least
$1-\epsilon$. It uses $\softo(d^{6(d+e-1)})$ operations in $\F$ plus
root finding for degree at most $d^{d+e-1}$.
\end{theorem}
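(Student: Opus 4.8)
The plan is to assemble the theorem from the correctness and cost statements already established for the individual steps of Algorithm \ref{mainAlgo}, keeping the two sources of genericity --- the fixed generic $\bs\alpha$ and the random $\bs\lambda$ --- carefully separated. First I would argue correctness conditional on the event that the $\bs\lambda$ drawn in step \eqref{step:chooseLambda} is \emph{lucky}, i.e.\ $R^{(5)}(\bs\lambda)\neq 0$. Under this hypothesis Lemma \ref{pivotEstimate} guarantees that $\mathcal L=\bs\lambda\cdot(\bs G,\bs H)$ separates $W_0\cup W_1$ and that, for each $j<d+e$, the specialization at $\bs Z^{(j)}$ of the generic Gaussian elimination in Subroutine \ref{GaussAlgo} encounters no spurious zero pivot, so step \eqref{step:findMinPoly} returns genuine minimal-degree data $m^{(j)}$ together with $w^{(j)}=-\partial m^{(j)}/\partial\Lambda_j$ modulo $\Lambda_j^2$, and $k_j=k\le d^{d+e-1}$. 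The lemma on step \eqref{step:geomSolution} then shows that $(\bs\lambda,m,v_1,\dots,v_{d+e-1})$ is a geometric solution of the component $V_1$ of the homotopy curve $V$ containing the starting point $(\bs g_0,\bs h_0,0)$ (which, as noted before the subroutine, lies on exactly one component because $\pi$ is unramified there by Lemma \ref{lemma:unramifiedalpha0}). Substituting $Y\leftarrow 1$, i.e.\ $S=1$, the lemma on step \eqref{step:subsS=1} shows that $(\bs\lambda,m_1,w_1,\dots,w_{d+e-1})$ is a geometric solution of the zero-dimensional fiber $(W_1\times\{1\})\cap V_1=(\pi_S|_{V_1})^{-1}(1)$, which is nonempty since $\pi_S|_{V_1}$ is dominant. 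Hence any root $\eta$ of $m_1$ yields $g|h=(w_1(\eta),\dots,w_{d+e-1}(\eta))$ with $g\circ h\in\cnd$ and $(g\circ h)(\alpha_j)=\beta_j$ for $1\le j<d+e$, so the verification in step \eqref{step:findgh} succeeds and $(g,h)$ is returned; moreover, because of that explicit check, the algorithm never returns an incorrect pair, at worst ``fail''.

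Next I would bound the failure probability. The only randomness is the choice of $\bs\lambda\in\mathcal S^{d+e-1}$. By Lemma \ref{pivotEstimate}, $R^{(5)}\in\F[\bs\Lambda]$ is nonzero of total degree at most $32(d^2+de)^2d^{5(d+e-1)}$, and by the Schwartz--Zippel-type bound (e.g.\ \cite[Lemma 6.44]{GaGe99}) the probability that $R^{(5)}(\bs\lambda)=0$ is at most $\deg R^{(5)}/\#\mathcal S\le\epsilon$ under the hypothesis $\#\mathcal S\ge 32\epsilon^{-1}(d^2+de)^2d^{5(d+e-1)}$. Thus with probability at least $1-\epsilon$ the draw is lucky, and by the previous paragraph the algorithm outputs a correct interpolant.

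Finally I would add up the costs. Steps \eqref{step:initialPoint}--\eqref{step:NHoperator} take $\softo(d^{2(d+e-1)})$ operations in $\F$ by Lemma \ref{analysisFirstFourSteps}; step \eqref{step:chooseLambda} takes $\softo(d^{3(d+e-1)})$ by Lemma \ref{lemma:ComplexityPowersL}; step \eqref{step:findMinPoly} takes $\softo(d^{6(d+e-1)})$ by Lemma \ref{lemma:ComplexityFindMinPoly}; step \eqref{step:geomSolution} takes $\softo(d^{2(d+e-1)})$ and step \eqref{step:subsS=1} takes $\softo(d^{3(d+e-1)})$ by the corresponding lemmas; and step \eqref{step:findgh} takes $\softo(d^{2(d+e-1)})$ plus one root-finding at degree at most $d^{d+e-1}$ by Lemma \ref{cost: findgh}. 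Since these costs are run over the $d+e-1$ indices $j$ only inside the already-accounted polynomial factors, the dominant term is the $\softo(d^{6(d+e-1)})$ of step \eqref{step:findMinPoly}, and the total is $\softo(d^{6(d+e-1)})$ operations in $\F$ plus root finding at degree at most $d^{d+e-1}$, as claimed.

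The delicate point --- already handled in the earlier lemmas, so here only invoked --- is that one single random draw of $\bs\lambda$ must simultaneously control all $d+e-1$ branches $\bs Z^{(j)}$ of the Gaussian elimination and the separation of $W_0\cup W_1$, while the genericity of $\bs\alpha$ is what makes $V_1$ a curve on which $S$ is a local parameter and the fiber over $S=1$ exactly the interpolant set; decoupling these two degeneracy conditions and bounding the combined polynomial $R^{(5)}=R^{(4)}\cdot\prod_{\ell,M'}R^{(5)}_{\ell,M'}$ is the main obstacle, and it reduces entirely to Lemma \ref{pivotEstimate}.
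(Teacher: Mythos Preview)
Your proposal is correct and follows exactly the paper's approach: assemble correctness from Lemma \ref{pivotEstimate} and the step-by-step lemmas on steps \eqref{step:geomSolution}--\eqref{step:findgh}, bound the failure probability by the degree of $R^{(5)}$ over $\#\mathcal S$, and sum the cost estimates of Lemmas \ref{analysisFirstFourSteps}--\ref{cost: findgh} to obtain the dominant $\softo(d^{6(d+e-1)})$ term. The paper's own proof is a two-sentence pointer to these same lemmas; your version simply unpacks it.
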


\begin{proof}
The various estimates in Lemmas \ref{analysisFirstFourSteps} through
\ref{cost: findgh} imply a cost of $\softo(d^{6(d+e-1)})$ operations
in $\F$ plus root finding. For a lucky choice of $\bs\lambda$, the
output is indeed a solution to Problem \ref{problem:interpolation},
and the probability for this to happen is at least $1-\epsilon$ by
Lemma \ref{pivotEstimate}.
\end{proof}
%
%------------------------------------------------------------------------
%------------------------------------------------------------------------
%------------------------------------------------------------------------
%------------------------------------------------------------------------
%
\section{Cost of the algorithm}\label{section:cost algorithm}

We have not tried to optimize the cost of our method.
The input to Problem \ref{problem:interpolation} consists of $2(d+e-1)$ elements of $\F$.
We do not expect that runtime polynomial in this input size is feasible,
since the coefficients of a solution $(g,h)$ lie in a field of potentially exponential degree.

The more appropriate view seems to consider
the geometric variant of Problem \ref{problem:interpolation} as
stated in Problem \ref{problem:interpolation2}. This involves the
incidence variety $\Gamma_{n,d}$ and its projection $\pi$ to
$\cF^{2(d+e-1)}$. According to \eqref{DdeltaBound}, their geometric
degrees are bounded as $D = \deg \pi \leq \delta = \deg \Gamma_{n,d}
\leq d^{d+e-1}$. In step (\ref{step:NHoperator}) of Algorithm
\ref{mainAlgo}, the quantities $M$ and $N$ are defined so that $M
>\delta\ge D$ and $N>2\delta D$. These inequalities are all that is being
used about $M$ and $N$ in the lemmas above, and the algorithm would
also work with $D+1$ and $2\delta D +1$ in their place.

Within the algorithm, we avoid using the values of $\delta$ and $D$ because we do not know them.
But we could start the algorithm with some small guess at those values
and then keep on doubling until we actually find a solution $(g,h)$.
The running time then is polynomial in $\delta$.
%
%\red{The number of solutions to Problem \ref{problem:interpolation} in the generic case
%equals the degree of the projection $\pi$ from \eqref{def:projection}.
%Question: runtime poly in $\deg\pi$?}

More precisely, the runtime of this variant of our algorithm is
$\softo{((d+e)^{\omega}D\delta}+(D\delta)^\omega+(d+e)D^2\delta)$
plus root finding for degree at most $D$. In particular, Lemma
\ref{lemma:ComplexityFindMinPoly} can be modified so that for a
lucky choice of $\bs\lambda$, step \eqref{step:findMinPoly} can be
performed with
$\softo((d+e)D^{\omega}\delta)\subset\softo(d^{4(d+e-1)})$
operations in $\F$. We briefly discuss below this modification.

The important papers of Lecerf \cite{lec03} and van der Hoeven \&\ Lecerf \cite{hoelec19} present
algorithms for solving systems of polynomial equations, with running
times $\softo{(B^3)}$ and $\softo{(B^2)}$, respectively, where $B$
is the system's B\'ezout number. The values $D$ and $\delta$
considered above satisfy $D\le\delta \leq B$. Our approach can
approximate $D$ and $\delta$ by binary search, as described. In
cases where they are considerably smaller than $B$, the running time
presented here may be smaller than that of the cited works.

The paper of \cite{hoelec19} assumes that their elimination encounters only radical ideals
and that the polynomial system forms a regular sequence.
In our case, the latter requires a saturation with respect to the Jacobian.
This has degree up to roughly $n^{d+e}$; see Figure \ref{Jacobian dQ/dG,H}.
If this is added to the equations of Problem \ref{problem:interpolation2}
via the Rabinovich trick, then it will increase the B\'ezout number by the stated factor.
Our approach circumvents this problem by using a starting value for the Newton iteration
where the Jacobian does not vanish.

For the characteristic $p$, we only demand that it does not divide
the degree $n$ of the target polynomial. \cite{lec03} is stated only
for characteristic zero.

We discuss here \cite{lec03} and \cite{hoelec19}, because they yield important progress
in general. But it does not seem that they are applicable to our problem.
%
%------------------------------------------------------------------------
%------------------------------------------------------------------------
%
\subsection{Hermite--Pad\'e approximations to perform step \eqref{step:findMinPoly} of Algorithm \ref{mainAlgo}}
\label{subsection:ComputationMinPolynomial}
In this section we briefly discuss a variant to perform step
\eqref{step:findMinPoly} of Algorithm \ref{mainAlgo}. Recall that,
in such a step, for each $j\in \{1,\ldots,d+e-1\}$ we find by binary
search the smallest values $k_j$ of $\ell$ and, given $k_j$, of
$M_j$ for $M'$ for which
\begin{equation}
\label{binarySearchbis} \sum_{0 \leq i \le \ell} a_i^{(j)}
(L^{(j)})^i \equiv 0 \bmod
 (Y^N,\Lambda_j^2)
\end{equation}
has a solution $a_0^{(j)}, \ldots, a_{\ell}^{(j)} \in
\F[\Lambda_j][Y]$ with $\deg_Y a_i^{(j)}\le \delta$ for $i<\ell$,
$\deg_Y a_\ell^{(j)} < M'$ and $a_\ell^{(j)}$ monic. For this
purpose, we consider \eqref{binarySearchbis} as a system of linear
equations for the coefficients in $\F[\Lambda_j]/(\Lambda_j^2)$ of
the $a_i^{(j)}$'s, and compute such a solution $a^{(j)}_0, \ldots,
a^{(j)}_{k_j}$ with $a^{(j)}_{k_j}=1$ monic of degree $M_j'$.

To find such a solution of \eqref{binarySearchbis}, we interpret it
as a problem of Hermite-Pad\'e
approximation, which can be solved applying an algorithm of Bostan et al.\ \cite{BoJeSc08}, based
on fast linear-algebra algorithms for matrices of fixed displacement
rank (cf.\ Bini \&\ Pan \cite{BiPa94},
Pan \cite{Pan01}).
More precisely, for a suitable ordering of the unknowns, the matrix
$B_j$ of system \eqref{binarySearchbis} is a block-Toeplitz matrix
with $\ell$ blocks (see Cafure et al.\ \cite[Lemma 4.3]{CaMaWa06}; see also
\cite[Lemma 17]{BoJeSc08}).

From \cite[Corollary 1]{BoJeSc08} it follows that, if there exist a
nonzero solution of \eqref{binarySearchbis}, then one such solution
can be computed with $\softo\big(\ell^{\omega-1}{\sf M}(\ell
M')\big)$ arithmetic operations in $\F$ and error probability at
most $1/(2\epsilon)$ for any fixed $\epsilon>0$. The algorithms of
\cite{BoJeSc08} work over a field. We apply their results to
computations over $\F(\!(\Lambda_j)\!)$, but perform calculations in
$\F[\![\Lambda_j]\!]/(\Lambda_j^2)$, which is all that we need. This
works because our conditions guarantee that all divisions are by
series which are invertible in $\F[\![\Lambda_j]\!]/(\Lambda_j^2)$.
Thus computing in $\F[\Lambda_j]/(\Lambda_j^2)$ is sufficient.
Taking into account that at most $2 \log_2 (2 D)$ values of $\ell$
and $2 \log_2 (2 \delta)$ values of $M'$ occur for $1\le j\le
d+e-1$, we have the following result.
\begin{lemma}\label{lemma:complexityHermitePade}
For a lucky choice of $\bs\lambda$, step \eqref{step:findMinPoly} of
Algorithm \ref{mainAlgo} can be computed with
$\softo\big((d+e)D^{\omega}\delta\big)$ operations in $\F$ and error
probability at most $1/(2\epsilon)$ for any fixed $\epsilon>0$.
\end{lemma}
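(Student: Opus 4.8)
The plan is to bound the cost of step \eqref{step:findMinPoly} by the cost of solving the underlying Hermite--Padé approximation problems via the displacement-rank machinery, then sum over the $O(d+e)$ indices $j$ and over the $O(\log D \cdot \log \delta)$ binary-search trials. The genericity hypothesis (a lucky $\bs\lambda$) is needed only to guarantee that the specialization $\bs\Lambda \leftarrow \bs Z^{(j)}$ encounters no vanishing pivots or denominators, so that computing modulo $\Lambda_j^2$ faithfully tracks the computation over $\F(\Lambda_j)$; this is exactly what Lemma \ref{pivotEstimate} provides. Thus I may assume throughout that the algorithm of \cite{BoJeSc08}, applied over the field $\F(\!(\Lambda_j)\!)$ but executed in the ring $\F[\![\Lambda_j]\!]/(\Lambda_j^2)$, behaves as over a field.

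\textbf{Key steps.} First I would recall, exactly as set up in the paragraph preceding the statement, that for fixed $j$ and fixed candidate degrees $\ell$ and $M'$, condition \eqref{binarySearchbis} is a linear system whose coefficient matrix $B_j$ is block-Toeplitz with $\ell$ blocks, citing \cite[Lemma 4.3]{CaMaWa06} and \cite[Lemma 17]{BoJeSc08}; in particular $B_j$ has displacement rank $O(\ell)$. Second, I would invoke \cite[Corollary 1]{BoJeSc08}: a nonzero solution, if one exists, is computed with $\softo(\ell^{\omega-1}{\sf M}(\ell M'))$ operations in the ground field and error probability at most $1/(2\epsilon)$. Third, I would bound the parameters: by Lemma \ref{pivotEstimate} and \cite[Proposition 1]{Schost03}, the relevant values satisfy $\ell \le k_j \le D$ (doubled at most once during the search, so still $O(D)$) and $M' \le \delta$, whence each single approximation call costs $\softo(D^{\omega-1}{\sf M}(D\delta)) = \softo(D^{\omega}\delta)$, using ${\sf M}(D\delta)\in\softo(D\delta)$ and absorbing logarithmic factors. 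Fourth, I would multiply by the number of calls: at most $2\log_2(2D)$ values of $\ell$ and, for each, at most $2\log_2(2\delta)$ values of $M'$, and this for each of the $d+e-1$ indices $j$; all these factors are polylogarithmic in $d^{d+e-1}$, hence swallowed by $\softo$, except the factor $d+e-1$, giving the claimed total $\softo((d+e)D^{\omega}\delta)$. Finally, I would note that the error probabilities across the $O((d+e)\log D\log\delta)$ calls combine to at most $1/(2\epsilon)$ after the usual reparametrization of $\epsilon$, so the stated bound on the error probability stands; the correctness of the output (that it is indeed the minimal-degree relation, matching the output of Subroutine \ref{GaussAlgo} under a lucky $\bs\lambda$) follows from the characterization already established in Lemma \ref{pivotEstimate}\eqref{specialization} together with the minimality guaranteed by the binary search.

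\textbf{Main obstacle.} The delicate point is the interplay between the base field $\F(\!(\Lambda_j)\!)$, over which \cite{BoJeSc08} is stated, and the truncated ring $\F[\![\Lambda_j]\!]/(\Lambda_j^2)$, in which we actually compute. One must argue that every division performed by the displacement-rank algorithm is by an element of $\F(\!(\Lambda_j)\!)$ whose $\Lambda_j$-adic valuation is nonpositive --- equivalently, that the leading $\Lambda_j$-coefficient is a nonzero element of $\F$ --- so that its image in $\F[\![\Lambda_j]\!]/(\Lambda_j^2)$ is invertible and the truncated computation is a faithful homomorphic image of the exact one. This is precisely where luckiness of $\bs\lambda$ enters: it forces all pivots and denominators to be nonzero already modulo $\Lambda_j$ (a fortiori modulo $\Lambda_j^2$), which is the content guaranteed by $R^{(5)}(\bs\lambda)\neq 0$ in Lemma \ref{pivotEstimate}. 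Once that is granted, each arithmetic operation over $\F(\!(\Lambda_j)\!)$ costs $O(1)$ operations in $\F$ (we carry only two $\F$-coefficients per element), and the cost analysis goes through verbatim.
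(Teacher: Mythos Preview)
Your proposal is correct and follows essentially the same route as the paper: recognize \eqref{binarySearchbis} as a Hermite--Pad\'e problem with block-Toeplitz matrix, invoke \cite[Corollary~1]{BoJeSc08} for the per-call bound $\softo(\ell^{\omega-1}{\sf M}(\ell M'))$, plug in $\ell\le O(D)$ and $M'\le O(\delta)$, and sum over the $d+e-1$ indices $j$ and the polylogarithmically many binary-search trials. Your discussion of computing in $\F[\![\Lambda_j]\!]/(\Lambda_j^2)$ as a faithful truncation of the computation over $\F(\!(\Lambda_j)\!)$ is exactly the paper's justification. One small caveat: the polynomial $R^{(5)}$ of Lemma~\ref{pivotEstimate} was assembled from the pivots of the Gaussian elimination in Subroutine~\ref{GaussAlgo}, not from the Schur-complement inversions of the \cite{BoJeSc08} algorithm; the paper treats the latter denominators separately in Proposition~\ref{prop:computationgeosolcurve}, so strictly speaking ``lucky'' here should be read with that extended condition in mind.
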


Next we analyze the probability of failure of the choice of the
vector $\bs\lambda$.
\begin{proposition}\label{prop:computationgeosolcurve}
Let $\mathcal{S}\subset\F$ be a finite set with at least
$64\epsilon^{-1}(d+e) D^2(\delta+2)^{\log\delta+3}$ elements. A
$\bs\lambda$ chosen uniformly at random in $\mathcal{S}^{d+e-1}$ is
lucky with probability at least $1-2\epsilon$.
\end{proposition}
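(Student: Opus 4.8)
The plan is to collect all the polynomial conditions on $\bs\lambda$ that must be avoided for the whole computation to succeed, bound their degrees, multiply them into a single nonzero polynomial, and apply the Schwartz–Zippel bound. First I would recall which conditions are in play. From Lemma~\ref{lemma:RandomChoiceLinearForm} we have $R^{(4)}\in\F[\bs\Lambda]$ of degree at most $4D^2$, whose nonvanishing guarantees that $\mathcal L=\bs\lambda\cdot(\bs G,\bs H)$ separates the points of $W_0\cup W_1$. The new ingredient is the failure probability coming from the Hermite–Padé variant of step~\eqref{step:findMinPoly}: by Lemma~\ref{lemma:complexityHermitePade}, the randomized algorithm of \cite{BoJeSc08} has error probability at most $1/(2\epsilon)$ per invocation, but — since we are using it as an arithmetic--Boolean circuit over $\F(\!(\Lambda_j)\!)$ and then specializing $\Lambda_j$ — its correctness and the well-definedness modulo $\Lambda_j^2$ of all intermediate quantities is governed by the nonvanishing of certain determinants in $\F[\bs\Lambda]$, exactly as in the proof of Lemma~\ref{pivotEstimate}. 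I would therefore introduce a polynomial $R^{(6)}\in\F[\bs\Lambda]$ — the product of all pivots/denominators that arise in the fast structured-linear-algebra solver across all the $2\log_2(2D)$ values of $\ell$ and $2\log_2(2\delta)$ values of $M'$ and all $j<d+e$ — and bound its degree.

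For that degree bound I would argue as in Lemma~\ref{pivotEstimate}: in \eqref{binarySearchbis} the entries of the block-Toeplitz coefficient matrix $B_j$ have degree in $\bs\Lambda$ at most $\ell\le 2D$; the matrix has size at most $N'\times(\ell M'+M')$ with $N'\le 2D(\delta+2)$ or so (the relevant dimension is essentially $2D\delta$, matching $N$ in the original algorithm); determinants of its square submatrices have degree at most (matrix size)$\cdot\ell$ in $\bs\Lambda$; and there are at most a polynomial-in-$(d+e,\log D,\log\delta)$ number of such pivots examined across all branches of the binary search and all $j$. Multiplying out, $\deg R^{(6)}$ is bounded by a quantity of the shape $(d+e)\cdot D^2\cdot(\delta+2)^{O(\log\delta)}$; tracking the constants carefully (the $2\log_2(2\delta)+3$ exponent on $\delta+2$ and the factor $64(d+e)D^2$) gives exactly a bound below $\tfrac12\epsilon\,\#\mathcal S$ under the hypothesis on $\#\mathcal S$. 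Here I would also fold in $R^{(4)}$, whose degree $4D^2$ is dominated by this, and set $R = R^{(4)}\cdot R^{(6)}$, noting $R\neq 0$ since each factor is nonzero (the pivots are genuinely nonzero rational functions, being determinants of submatrices that are nonsingular for generic $\bs\Lambda$, by the existence of a solution of the prescribed degree from \cite[Proposition~1]{Schost03}).

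Then I would finish by a union bound. By von zur Gathen \&\ Gerhard \cite[Lemma~6.44]{GaGe99}, a uniformly random $\bs\lambda\in\mathcal S^{d+e-1}$ satisfies $R(\bs\lambda)=0$ with probability at most $\deg R/\#\mathcal S\le \tfrac12\epsilon^{-1}\cdot(\text{degree bound})/(\text{degree bound})\cdot\epsilon \le \epsilon$ — more precisely, with $\#\mathcal S\ge 64\epsilon^{-1}(d+e)D^2(\delta+2)^{\log\delta+3}$ and $\deg R$ bounded by $32(d+e)D^2(\delta+2)^{\log\delta+3}$ (the extra factor $2$ absorbing the $R^{(4)}$ contribution and the constants from the binary-search count), this probability is at most $\epsilon$. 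Separately, the internal randomization of the solver of \cite{BoJeSc08} contributes, over all $O((d+e)\log D\log\delta)$ invocations, an error of at most another $\epsilon$ after renormalizing $\epsilon$ across the calls (this is the ``$1/(2\epsilon)$'' per call in Lemma~\ref{lemma:complexityHermitePade}, summed). Combining the two sources gives failure probability at most $2\epsilon$, i.e.\ $\bs\lambda$ is lucky with probability at least $1-2\epsilon$, as claimed.

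The main obstacle I anticipate is the bookkeeping of the degree of $R^{(6)}$: one must be careful that the fast, displacement-rank-based linear solver of \cite{BoJeSc08,BiPa94,Pan01} only ever divides by (and tests) quantities that are \emph{minors of the original} block-Toeplitz matrix $B_j$ — not arbitrary intermediate rational functions of uncontrolled degree — so that the Bareiss/Edmonds-type bound on pivot degrees (as invoked in Lemma~\ref{pivotEstimate}) still applies. If the structured algorithm internally forms products or Schur complements whose numerators are not sub-determinants of $B_j$, one would instead bound their degree by the displacement-rank analysis, which still yields a polynomial-size bound but with different constants; either way the exponent $\log\delta+3$ on $(\delta+2)$ is what drives the size of $\mathcal S$, and matching it precisely to the stated hypothesis is the delicate point. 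Everything else is a routine union bound.
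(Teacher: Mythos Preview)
Your overall strategy is right and matches the paper's: collect the nonzero polynomials in $\bs\Lambda$ whose vanishing causes a failure, bound their total degree, and apply Schwartz--Zippel. You also correctly include $R^{(4)}$ from Lemma~\ref{lemma:RandomChoiceLinearForm}.

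The gap is precisely at the point you flag as the ``main obstacle.'' You propose to bound the degrees of the denominators arising in the structured solver by the Bareiss/Edmonds argument (pivots are ratios of minors of the original block-Toeplitz matrix $B_j$), which would yield a bound polynomial in $D$ and $\delta$. But that argument does \emph{not} apply to Kaltofen's Leading Principal Inverse algorithm underlying \cite{BoJeSc08}: beyond the recursive inversion of $A_{1,1}$ and its Schur complement $\Delta$, the algorithm performs a \emph{compression} step that inverts submatrices of \emph{products} built from $A_{1,1}^{-1}$, $\Delta^{-1}$, $A_{1,2}$, $A_{2,1}$. Those are not submatrices of $B_j$, so their minors are not minors of $B_j$, and the Bareiss bound is unavailable. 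The paper's proof therefore does the actual work you defer: it tracks degrees through the recursion, showing that entries of $A_{1,1}$ have degree at most $D$, that numerators and denominators in $\Delta^{-1}$ have degree at most $D(\delta+2)$, and hence by a recursive argument over the $\log_2\delta$ levels that all inverted principal submatrices and Schur complements have numerator/denominator degrees bounded by $\mathcal D = D(\delta+2)^{\log\delta}$; the compression denominators then have degree at most $3\mathcal D\delta$. Summing over the at most $4$ inversions per recursive step, the $\log_2\delta$ steps, the $2\log_2(2D)$ values of $\ell$, the $2\log_2(2\delta)$ values of $M'$, and the $d+e-1$ indices $j$ gives the bound $32(d+e)\mathcal D\,\delta\log_2^3(2\delta)$, which is what drives the hypothesis $\#\mathcal S \ge 64\epsilon^{-1}(d+e)D^2(\delta+2)^{\log\delta+3}$.

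In short: you correctly located the delicate point, but the recursive degree analysis of Kaltofen's algorithm \emph{is} the proof, and it cannot be replaced by the minor-based bound from Lemma~\ref{pivotEstimate}. Note also that if the Bareiss bound really applied, you would get a degree polynomial in $\delta$, which would be strictly smaller than the $(\delta+2)^{\log\delta+3}$ in the hypothesis; the super-polynomial exponent is a genuine feature of the structured algorithm's compression step, not slack in the estimate.
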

\begin{proof}
Arguing as in Lemma \ref{pivotEstimate}, we analyze the denominators
in $\F[\Lambda_j]$ which occur when using the algorithm of
\cite{BoJeSc08} to solve the Hermite-Pad\'e approximation problem
\eqref{binarySearchbis} for $1\le j\le d+e-1$. This algorithm is an
adaptation of Kaltofen's \emph{Leading Principal Inverse} algorithm
(Kaltofen \cite{Kaltofen94}, \cite{Kaltofen95}). The algorithm
performs a recursive reduction of the computation of the inverse of
a ``generic-rank-profile'' square input matrix
$$A=\left(%
\begin{array}{cc}
  A_{1,1} & A_{1,2} \\
  A_{2,1} & A_{2,2} \\
\end{array}%
\right)$$
to that of the leading principal submatrix $A_{1,1}$ and its Schur
complement $\Delta=A_{2,2}-A_{2,1}A_{1,1}^{-1}A_{1,2}$. The
divisions which arise during the execution of this recursive step
are related to the computation of $A_{1,1}^{-1}$ and $\Delta^{-1}$
and a routine of ``compression'' (cf.\ \cite[Problem
2.2.11.c]{BiPa94}) of the generators of matrices which are obtained
as certain products involving $A_{1,1}^{-1}$, $\Delta^{-1}$,
$A_{1,2}$ and $A_{2,1}$. The latter in turn requires the computation
of the inverses of certain submatrices of the products under
consideration.

Each entry of the matrix $B_j$ of the linear system
\eqref{binarySearchbis} is a coefficient of a power $(L^{(j)})^i$,
which is therefore a polynomial in $\Lambda_j$ of degree at most
$i\le \ell$. Since the generic-rank-profile matrix $A$ is obtained
by multiplying the matrix $B_j$ with suitable matrices with entries
in $\F$, we conclude that the entries of $A_{1,1}$ are polynomials
of $\F[\Lambda_j]$ of degree at most $\ell\le D$, while the
numerators and denominators of the entries of $\Delta^{-1}$ are
polynomials of $\F[\Lambda_j]$ of degree at most $D(\delta+2)$.
Therefore, by a simple recursive argument it can be seen that the
numerators and denominators of all leading principal submatrices and
Schur complements which are inverted during the algorithm have
degrees bounded by $\mathcal{D}=D(\delta+2)^{\log\delta}$. This in
turn implies that the denominators arising during the compression
routine have degrees bounded by $3\mathcal{D}\delta$.

Taking into account that the algorithm of \cite{BoJeSc08} for each
value of $\ell$ and $M'$ consists of at most $\log_2 \delta$
recursive steps, and that each recursive step requires the inversion
of at most 4 matrices, we conclude that the product of all the
denominators arising during the algorithm has degree bounded by
$8\mathcal{D}\delta\log_2\delta$. Since at most $2 \log_2 (2 D)$
values of $\ell$ and $2 \log_2 (2 \delta)$ values of $M'$ occur for
$1\le j\le d+e-1$, we conclude that the degree of all denominators
is bounded from above by
$32(d+e)\mathcal{D}\delta\log_2^3(2\delta)$.

Finally, according to Lemma \ref{lemma:RandomChoiceLinearForm},
$\bs\lambda$ must not annihilate a polynomial $R^{(4)}$ of degree
$2D^2$. The statement of the proposition readily follows.
\end{proof}

We recall from \eqref{degVupperBound} and \eqref{Dbound} that
$D \leq \delta \leq d^{d+e-1}$.
Then from the above estimate, Lemma
\ref{lemma:complexityHermitePade} and the error probability
$1/(2\mu)$ of the algorithm underlying Lemma
\ref{lemma:complexityHermitePade},  we deduce the following result.
\begin{theorem}
\label{thm:final_D_delta} Let $d, e \geq 2$,  $0 < \epsilon <1$,
$\F$ be a field of characteristic not dividing $d$ and with at least
$64\epsilon^{-1}(d+e) D^2(\delta+2)^{\log\delta+3}$ elements, $\bs
\alpha\in \F^{d+e-1}$ generic, and $\bs \beta \in\F^{d+e-1}$. Then a
solution $(g,h)$ to Problem \ref{problem:interpolation} can be
computed with probability at least $1-\epsilon$. It uses
$\softo{((d+e)^{\omega}D^\omega\delta)}$ operations in $\F$ plus
root finding for degree at most $D$.
\end{theorem}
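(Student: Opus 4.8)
The plan is to establish this as a refinement of Theorem \ref{thm:final}, running the variant of Algorithm \ref{mainAlgo} described in Section \ref{section:cost algorithm} in which the hard-coded bounds $M$ and $N$ are replaced by guesses. Concretely, I would keep running estimates $\delta' \geq D'$ for the geometric degrees $\delta = \deg V$ and $D = \deg \pi_S$, initialised small, set $M = \delta'+1$ and $N = 2\delta'D'+1$, execute steps \eqref{step:NHoperator}--\eqref{step:findgh} with the Hermite--Pad\'e procedure of Subsection \ref{subsection:ComputationMinPolynomial} carrying out step \eqref{step:findMinPoly}, and double $\delta'$ and $D'$ and repeat whenever step \eqref{step:findgh} fails to output a pair $(g,h)$ passing the final verification $(g\circ h)(\alpha_j)=\beta_j$. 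The first observation to record is that a wrong guess can never produce a false output, and that all the supporting lemmas use only the inequalities $M > \delta \geq D$ and $N > 2\delta D$; hence once $\delta' \geq \delta$ and $D' \geq D$ --- which happens after $O(\log\delta)$ rounds, in the last of which $\delta' \leq 2\delta$ and $D' \leq 2D$ --- the run coincides with the one analysed in Sections \ref{sec:NumberInterpolPols}--\ref{section:cost algorithm}.

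For correctness of that decisive round under a lucky $\bs\lambda$, I would reuse the chain of arguments from the proof of Theorem \ref{thm:final}: Proposition \ref{interpolForSmallSizeBeta} gives $(\bs g_0, \bs h_0) \in W_0$; Lemma \ref{lemma:unramifiedalpha0} places $(\bs g_0,\bs h_0,0)$ on a unique component $V_1$ with $S$ a local parameter, so the Newton--Hensel iteration of step \eqref{step:NHoperator} converges to the power-series expansion of $V_1$; Lemma \ref{pivotEstimate} together with Lemma \ref{lemma:complexityHermitePade} show that step \eqref{step:findMinPoly} returns the correct minimal-polynomial data (here I would also note, as in Lemma \ref{lemma:ComplexityFindMinPoly}, that luckiness of $\bs\lambda$ makes all the rational functions over $\F(\Lambda_j)$ occurring in the structured solver well-defined modulo $\Lambda_j^2$, so the computation is legitimately performed in $\F[\Lambda_j]/(\Lambda_j^2)$); and steps \eqref{step:geomSolution}--\eqref{step:subsS=1} then assemble, via \eqref{eq:definitionParametrizChow}--\eqref{eq:parametrizacionesChow}, a geometric solution of $V_1$ and of the zero-dimensional fibre $(W_1\times\{1\})\cap V_1$, from which step \eqref{step:findgh} extracts an interpolant $(g,h)$ for $(\bs\alpha,\bs\beta)$ that then passes the verification.

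The cost estimate I would assemble additively for the last round, where $M = O(\delta)$ and $N = O(D\delta)$: steps \eqref{step:initialPoint}--\eqref{step:NHoperator} contribute $\softo((d+e)^\omega {\sf M}(N)) = \softo((d+e)^\omega D\delta)$ by the proof of Lemma \ref{analysisFirstFourSteps}; step \eqref{step:chooseLambda} contributes $\softo((d+e)MN) = \softo((d+e)D\delta^2)$ by Lemma \ref{lemma:ComplexityPowersL}; step \eqref{step:findMinPoly} contributes $\softo((d+e)D^\omega\delta)$ by Lemma \ref{lemma:complexityHermitePade}; steps \eqref{step:geomSolution}--\eqref{step:subsS=1} are absorbed into $\softo((d+e)D\delta^2)$; and step \eqref{step:findgh} costs $\softo(D\delta)$ plus one root finding at degree $\deg m_1 \leq D$. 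Summing, using $D \leq \delta$, and swallowing the $O(\log\delta)$ rounds into the soft-$O$ gives the stated $\softo((d+e)^\omega D^\omega\delta)$ operations in $\F$ plus root finding at degree at most $D$.

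Finally, for the probability bound I would track the two independent failure modes --- a non-lucky $\bs\lambda$, controlled by Proposition \ref{prop:computationgeosolcurve} via the hypothesis on $\#\mathcal S$, and the internal randomisation of the structured-linear-algebra solver inside step \eqref{step:findMinPoly}, controlled by Lemma \ref{lemma:complexityHermitePade} --- and combine them by a union bound after rescaling $\epsilon$ by a constant factor, fixing $\bs\lambda$ once before the doubling loop so that luckiness need not be re-examined per round. The step I expect to need the most care is precisely this probabilistic bookkeeping: one must make the genericity bound on $\bs\lambda$ from Proposition \ref{prop:computationgeosolcurve} and the Monte Carlo error of the structured solver combine cleanly into a single $1-\epsilon$ guarantee, while simultaneously checking that the outer doubling loop introduces neither a false positive nor more than a polylogarithmic overhead.
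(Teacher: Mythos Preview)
Your approach is essentially the paper's own: run the doubling variant of Algorithm~\ref{mainAlgo} with step~\eqref{step:findMinPoly} carried out via the Hermite--Pad\'e solver, invoke Lemma~\ref{lemma:complexityHermitePade} for the cost of that step, and combine Proposition~\ref{prop:computationgeosolcurve} with the solver's internal Monte Carlo error for the probability bound. The paper's proof is a two-line reference to exactly these ingredients, so your expanded write-up is in line with it.

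There is, however, one accounting slip in your cost summation. You bound step~\eqref{step:chooseLambda} by $\softo((d+e)MN)=\softo((d+e)D\delta^2)$, because you set $M=\delta'+1$ and let step~\eqref{step:chooseLambda} compute powers of $L^{(j)}$ up to exponent $2M$. But $(d+e)D\delta^2$ is \emph{not} dominated by $(d+e)^\omega D^\omega\delta$ in general: that inequality is equivalent to $\delta\le((d+e)D)^{\omega-1}$, and nothing in the hypotheses (in particular not $D\le\delta$) guarantees it. The fix is the one implicit in the paper's remark that the variant runs in $\softo((d+e)^{\omega}D\delta+(D\delta)^\omega+(d+e)D^2\delta)$: decouple the two roles played by $M$. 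The $Y$-degree bound on the $a_i^{(j)}$ must indeed be $\delta'+1$, but the binary search on $\ell$ in step~\eqref{step:findMinPoly} --- and hence the number of powers of $L^{(j)}$ that step~\eqref{step:chooseLambda} actually needs --- is bounded by $2D'$, not $2\delta'$. With that adjustment step~\eqref{step:chooseLambda} costs $\softo((d+e)D'\cdot N)=\softo((d+e)D^2\delta)$, which is absorbed into $\softo((d+e)D^\omega\delta)$ since $\omega\ge 2$, and then your summation yields the stated $\softo((d+e)^\omega D^\omega\delta)$. The rest of your argument (correctness under a lucky $\bs\lambda$, the verification guard against false positives, and the union bound over the two randomisation sources) is fine.
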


\section*{Open question}
Our algorithm uses time polynomial in the \emph{geometric input
size}, namely the degree of the variety $\Gamma_{n,d}$ in
\eqref{defGamma} that encodes our problem. The entries of an output
$(g,h)$ lie in an extension field whose degree is bounded by $D \leq
d^{d+e-1}$ and our runtime estimate is polynomial in this bound.
It remains open how close this bound is to the actual minimal extension degree required;
some experiments might shed light on this, but we have not done this.

One may wonder whether time polynomial in the actual output size of an instance is possible,
or whether some particularly succinct representation of the output allows a faster solution.
The special cases treated in Section \ref{section:specialBeta} even allow a solution
in time polynomial in the size of the input, consisting just of the $d+e-1$ field elements that
form $\alpha$ and $\beta$.
This cannot be expected in general.

Another approach would be to use a numerical instead of our algebraic homotopy.
We have not investigated this.

\section*{Acknowledgements}
We thank Adrien Touboul for pointing out to us the question addressed in this paper,
%Emails 2019-06-03, -08, -26 and others.
and Joris van der Hoeven and Gr\'egoire Lecerf for useful comments.

\providecommand{\bysame}{\leavevmode\hbox
to3em{\hrulefill}\thinspace}
\providecommand{\MR}{\relax\ifhmode\unskip\space\fi MR }
% \MRhref is called by the amsart/book/proc definition of \MR.
\providecommand{\MRhref}[2]{%
  \href{http://www.ams.org/mathscinet-getitem?mr=#1}{#2}
} \providecommand{\href}[2]{#2}


\begin{thebibliography}{10}

\bibitem{Abhyankar90}
S.~Abhyankar, \emph{Algebraic geometry for scientists and
engineers}, Math.
  Surv. Monogr., vol.~35, American Mathematical Society, Providence, RI, 1990.


\bibitem{bar68}
E.~Bareiss. \emph{Sylvester's Identity and Multistep
                  Integer-Preserving Gaussian Elimination},
Math. Comp. \textbf{22} (1968), 565--578.

\bibitem{bertas10}
D.~Berend and T.~Tassa, \emph{Improved bounds on Bell numbers and on
moments of sums of random variables}, Probability and Mathematical
Statistics, \textbf{30} (2010), 185--205.

\bibitem{BiPa94}
D.~Bini and V.~Pan, \emph{Polynomial and matrix computations},
Progress in
  Theoretical Computer Science, Birkh{\"a}user, Boston, 1994.

\bibitem{blagat13}
R.~Blankertz, J.~von~zur~Gathen, and
                  K.~Ziegler,
\emph{Compositions and collisions at degree $p^2$}, J. Symbolic
Computation \textbf{59} (2013), 113-145.

  \bibitem{BoJeSc08}
A.~Bostan, C.-P. Jeannerod, and E.~Schost, \emph{Solving structured
linear systems with large displacement rank}. Theoret.
  Comput. Sci. \textbf{407} (2008), 155--181.

\bibitem{CaMa06}
A.~Cafure and G.~Matera, \emph{Improved explicit
  estimates on the number of solutions of equations over a finite field},
  Finite Fields Appl. \textbf{12} (2006), 155--185.

\bibitem{CaMaWa06}
A.~Cafure, G.~Matera, and A.~Waissbein, \emph{Inverting bijective
  polynomial maps over finite fields}, in Proceedings of the 2006 Information
  Theory Workshop, ITW2006,
  G.~Seroussi and A.~Viola, eds., IEEE Information Theory Society, 2006,
  pp.~27--31.

\bibitem{Danilov94}
V.~Danilov, \emph{Algebraic varieties and schemes}, in
I.~Shafarevich (Ed.),
  Algebraic Geometry I, vol.~\textbf{23} of Encyclopaedia of Mathematical Sciences,
  Springer, Berlin Heidelberg New York, 1994, 167--307.

\bibitem{edm67}
J.~Edmonds, \emph{Systems of distinct representatives and linear
                  algebra},
J.~ Res.~Nat.~Bureau Standards \textbf{71B} (1967),
241--245.

\bibitem{Fulton84}
W.~Fulton, \emph{Intersection theory}, Springer, Berlin Heidelberg
New York,
  1984.

\bibitem{legall14}
F.~Le Gall, \emph{Powers of tensors and fast matrix multiplication},
in Proceedings of the 39th International Symposium on Symbolic and
Algebraic Computation (ISSAC 2014), 2014, pages 296-303. Also {\tt
arXiv:1401.7714}.

\bibitem{gat14}
J.~von~zur Gathen, \emph{{Normal form for Ritt's Second Theorem}},
{Finite Fields Appl.}
  \textbf{27} (2014), 41--71.

\bibitem{GaGe99}
J.~von~zur {Gathen} and J.~Gerhard, \emph{Modern computer algebra},
  Cambridge Univ. Press, Cambridge, 2013.

\bibitem{gatmat17}
J.~von~zur Gathen and G. Matera, \emph{Density of real and complex
decomposable univariate polynomials}, {Q. J. Math.} \textbf{68}
(2017), no.~4, 1227--1246.

\bibitem{GiMa19}
N.~Gim\'enez and G. Matera, \emph{On the bit complexity of
polynomial system solving}, J. Complexity \textbf{51} (2017),
20--67.

\bibitem{GiLeSa01}
M.~Giusti, G.~Lecerf, and B.~Salvy, \emph{A {Gr\"obner} free
alternative
  for polynomial system solving}, J. Complexity \textbf{17} (2001), 154--211.

\bibitem{Heintz83}
J.~Heintz, \emph{{Definability} and fast quantifier elimination in
  algebraically closed fields}, Theoret. Comput. Sci. \textbf{24} (1983), 239--277.

\bibitem{hoelec19}
J.\ van der Hoeven and G.\ Lecerf, \emph{On the complexity exponent
of polynomial system solving}, Found. Comput. Math. \textbf{21}
(2021), no.~1, 1--57.

\bibitem{lec03}
G.\ Lecerf,
\emph{Computing the equidimensional decomposition of an algebraic closed set by means of lifting fibers},
 J.\ Complexity, \textbf{19}(4), 2003, 564--596.

\bibitem{Iversen73}
B.~Iversen, \emph{Generic local structure of the morphisms in
commutative algebra}, Springer Lect. Notes in Math. 13, Springer,
New York, 1973.

\bibitem{Kaltofen94}
E.~Kaltofen, \emph{Asymptotically fast solution of {Toeplitz}--like
  singular linear systems}, in Proceedings ISSAC'94,
  J.~von~zur Gathen and M.~Giesbrecht, eds., New York, 1994, ACM Press,
  pp.~297--304.

\bibitem{Kaltofen95}
\leavevmode\vrule height 2pt depth -1.6pt width 23pt,
E.~Kaltofen, \emph{Analysis
of
  {Coppersmith}'s block {Wiedemann} algorithm for the parallel solution of
  sparse linear systems}, Math. Comp. \textbf{64} (1995), 777--806.

\bibitem{keduma11}
K.~Kedlaya and C.~Umans, \emph{Fast polynomial factorization and
modular composition}, SIAM J. Computing \textbf{40} (6) (2011),
1767-1802.

\bibitem{Kunz85}
E.~Kunz, \emph{Introduction to commutative algebra and algebraic
geometry},
  Birkh{\"a}user, Boston, 1985.

\bibitem{Mezo20}
I.~Mez\"o, \emph{Combinatorics and number theory of counting
sequences}, Discrete Math. Appl. (Boca Raton), CRC Press, Boca
Raton, FL, 2020.

\bibitem{Pan01}
V.~Pan, \emph{Structured matrices and polynomials. {Unified}
superfast algorithms}, Birkh\"auser, Boston, 2001.

\bibitem{Rouillier97}
F.~Rouillier, \emph{Solving zero--dimensional systems through
rational
  univariate representation}, Appl. Algebra Engrg. Comm. Comput. \textbf{9} (1997),
  433--461.

\bibitem{Schost03}
E.~Schost, \emph{Computing parametric geometric resolutions}, Appl.
  Algebra Engrg. Comm. Comput. \textbf{13} (2003), 349--393.

\bibitem{Shafarevich94}
I.~Shafarevich, \emph{Basic algebraic geometry: {Varieties} in
projective
  space}, Springer, Berlin Heidelberg New York, 1994.


\end{thebibliography}
\end{document}